\newtheorem{theorem}{Theorem}[section]
\newtheorem{proposition}[theorem]{Proposition}
\newtheorem{lemma}[theorem]{Lemma}
\newtheorem{corollary}[theorem]{Corollary}
\newtheorem*{thm:local}{Theorem \ref{TMainStatementPrimes}}
\newtheorem*{thm:htpyGolod}{Propoition \ref{PNecessary}}
\newtheorem*{thm:main}{Theorem \ref{TMainStatement2}}
\theoremstyle{definition}
\newtheorem{definition}[theorem]{Definition}
\newtheorem{conjecture}[theorem]{Conjecture}
\newtheorem{example}[theorem]{Example}
\newtheorem{question}[theorem]{Question}
\newcounter{bean}
\newcommand{\map}[3]{\ensuremath{#1\colon#2
 {\longrightarrow}#3}}
\newcommand{\seqm}[3]{\ensuremath{#1\stackrel{#2}
 {\longrightarrow}#3}}
\newcommand{\seqmm}[5]{\ensuremath{#1\stackrel{#2}
 {\longrightarrow}#3\stackrel{#4}{\longrightarrow}#5}}
\newcommand{\seqmmm}[7]{\ensuremath{#1\stackrel{#2}
 {\longrightarrow}#3\stackrel{#4}{\longrightarrow}#5
  \stackrel{#6}{\longrightarrow}#7}}
\newcommand{\floor}[1]{\ensuremath{\left\lfloor #1 \right\rfloor}}
\newcommand{\paren}[1]{\ensuremath{\left( #1 \right)}}
\newcommand{\br}[1]{\ensuremath{\left\{ #1 \right\}}}
\newcommand{\abs}[1]{\ensuremath{\left|#1\right|}}
\newcommand{\cwedge}[3]{\displaystyle\bigwedge^{#2}_{#1}#3}
\newcommand{\cvee}[3]{\displaystyle\bigvee^{#2}_{#1}#3}
\newcommand{\cprod}[3]{\displaystyle\prod^{#2}_{#1}#3}
\newcommand{\cplus}[3]{\displaystyle\bigoplus^{#2}_{#1}#3}
\newcommand{\cdunion}[3]{\displaystyle\coprod^{#2}_{#1}#3}
\newcommand{\cunion}[3]{\displaystyle\bigcup^{#2}_{#1}#3}
\newcommand{\csum}[3]{\displaystyle\sum^{#2}_{#1}#3}
\newcommand{\cunionmulti}[4]{\displaystyle\bigcup^{#3}
_{\renewcommand{\arraystretch}{0.6}\begin{matrix}\scriptstyle #1 \cr \scriptstyle #2\end{matrix}\renewcommand{\arraystretch}{1.2}}#4}
\newcommand{\csummulti}[4]{\displaystyle\sum^{#3}
_{\renewcommand{\arraystretch}{0.6}\begin{matrix}\scriptstyle #1 \cr \scriptstyle #2\end{matrix}\renewcommand{\arraystretch}{1.2}}#4}
\newcommand{\cset}[2]{\br{#1\,\,\middle\vert\,\,#2}}
\newcommand{\qqed}{\hfill\square}
\renewcommand{\k}{\mathbf k}
\newcommand{\ul}[1]{\ensuremath{\underline{#1}}}
\newcommand{\mc}[1]{\ensuremath{\mathcal{#1}}}
\newcommand{\mb}[1]{\ensuremath{\mathbb{#1}}}
\newcommand{\Z}{\ensuremath{\mathbb{Z}}}
\newcommand{\ID}{\ensuremath{\mathbbm{1}}}
\renewcommand{\map}{\ensuremath{\mbox{map}}}
\newcommand{\bd}{\ensuremath{\partial}}
\newcommand{\wcolon}{\ensuremath{\,\colon\,}}
\begin{document}
\title{Configuration Spaces and Polyhedral Products}

\author{Piotr Beben}
\address{\scriptsize{School of Mathematics, University of Southampton,Southampton SO17 1BJ, United Kingdom}} 
\email{P.D.Beben@soton.ac.uk} 
\author{Jelena Grbi\'c} 
\address{\scriptsize{School of Mathematics, University of Southampton,Southampton SO17 1BJ, United Kingdom}}  
\email{J.Grbic@soton.ac.uk} 

\subjclass[2010]{Primary 55P15, 55P35, 55U10, 13F55}
\keywords{polyhedral product, moment-angle complex, toric topology, Stanley-Reisner ring, Golod ring, configuration space} 
\thanks{Research supported in part by The Leverhulme Trust Research Project Grant  RPG-2012-560.}

\begin{abstract}
This paper aims to find the most general combinatorial conditions under which a moment-angle complex $(D^2,S^1)^K$ is a co-$H$-space,
thus splitting unstably in terms of its full subcomplexes. In this way we study to which extent the conjecture holds that a moment-angle complex over a Golod simplicial complex is a co-$H$-space.
Our main tool is a certain generalisation of the theory of labelled configuration spaces. 
\end{abstract}

\maketitle

\section{Introduction}

Polyhedral products have been the subject of quite a bit of interest recently, 
beginning with their appearance as homotopy theoretical generalisations of various objects studied in toric topology. 
Of particular importance are the polyhedral products $(D^2,S^1)^K$ and $(\mb CP^\infty,\ast)^K$, 
known as \emph{moment-angle complexes} and \emph{Davis-Januszkiewicz spaces} respectively.
The homotopy theory of these spaces has many applications - from complex and symplectic geometry (c.f.~\cite{MR2285318, MR2431667, MR3180483}), 
to combinatorial and homological algebra (c.f.~\cite{MR1453579,MR846439}).
For example, moment-angle manifolds appear as intersection of quadrics or as quasitoric manifolds after taking a certain orbit space, 
\emph{Stanley-Reisner rings} of simplicial complexes are realised by the cohomology of Davis-Januszkiewicz spaces 
(equivalently, the equivariant cohomology ring of moment-angle complexes),
while the cohomology ring of moment-angle complexes is closely related to the study of the cohomology of local rings
(c.f.~\cite{MR1104531, MR1897064}). 
One would like to know how the combinatorics of the underlying simplicial complex $K$ encodes geometrical and topological properties of polyhedral products, and vice-versa. 
The case of Golod complexes is especially relevant. 
A ring $R=\k[v_1,\ldots,v_n]/I$ for $I$ a homogeneous ideal is said to be \emph{Golod} if all products and higher Massey products in 
$\mathrm{Tor}^+_{\k[v_1,\ldots,v_n]}(R,\k)$ vanish. 
Golod~\cite{MR0138667} showed that the Poincar\'e series of the homology ring $\mathrm{Tor}_{R}(\k,\k)$ of $R$ is a rational function 
whenever $R$ is Golod. From the context of combinatorics, 
a simplicial complex $K$ on vertex set $[n]=\{1,\ldots,n\}$ is said to be \emph{Golod} over $\k$ if the \emph{Stanley-Reisner ring} $\k[K]$ is Golod,
and if this is true for all fields $\k$ and $\k=\Z$, we simply say that $K$ is Golod.
Fixing $\k$ to be a field or $\mb Z$, by~\cite{MR1897064,MR2255969,MR2117435,MR0441987} there are isomorphisms of graded commutative algebras
$$
H^*((D^2,S^1)^K;\k)\cong \mathrm{Tor}_{\k[v_1,\ldots,v_n]}(\k[K],\k) \cong \cplus{I\subseteq[n]}{}{\tilde H^*(\Sigma^{|I|+1}|K_I|;\k)}
$$
where $\k[K]$ is the Stanley-Reisner ring of $K$, $K_I$ is the restriction of $K$ to vertex set $I\subseteq[n]$,
and the multiplication in the rightmost algebra is realised by maps
$$ 
\iota_{I,J}\wcolon\seqm{|K_{I\cup J}|}{}{|K_I\ast K_J|\cong |K_I|\ast |K_J|\simeq\Sigma |K_I|\wedge |K_J|}, 
$$
induced by the canonical inclusions \seqm{K_{I\cup J}}{}{K_I\ast K_J} whenever $I$ and $J$ are non-empty and disjoint. 
The Golod condition can then be reinterpreted as $\iota_{I,J}$ inducing trivial maps on $\k$-cohomology for disjoint non-empty 
$I$ and $J$ together with Massey products vanishing in $H^+((D^2,S^1)^K;\k)$.
This topological interpretation of the Golod condition has been a starting point for applying the homotopy theory of moment-angle complexes 
to the problem of determining which simplicial complexes $K$ are Golod, see~\cite{MR2321037} for example.
In the opposite direction, the cohomology of a moment-angle complex $(D^2,S^1)^K$ takes its simplest algebraic form when we restrict to Golod $K$.
Golod complexes are therefore a natural starting point for studying the homotopy types of moment-angle complexes.

Considerable work has been done on the homotopy theory of moment-angle complexes over Golod complexes
~\cite{MR2138475,MR2321037,MR3084441,MR3084442,arXiv:1306.6221,arXiv:1211.0873},
culminating in the following conjectured topological characterisation of the Golod complexes.
\begin{conjecture}
\label{CGolodConj}
A moment-angle complex $(D^2,S^1)^K$ is a co-$H$-space if and only if $K$ is Golod.
\end{conjecture}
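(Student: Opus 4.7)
My plan is to attack the two directions separately. The forward implication is relatively standard: if $(D^2,S^1)^K$ is a co-$H$-space then the reduced diagonal factors through the wedge $X \vee X$, forcing all reduced cup products to vanish; a further argument using the existence of a comultiplication (equivalently, a lift to a loop space $\Omega\Sigma X$-style structure) shows that higher Massey products must also vanish. Under the isomorphism of graded algebras $H^*((D^2,S^1)^K;\k) \cong \mathrm{Tor}_{\k[v_1,\ldots,v_n]}(\k[K],\k)$ and the compatibility of the Massey structure on each side, this is precisely the Golod condition for $\k[K]$ over every field and over $\Z$, i.e.~$K$ is Golod.

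The substantive direction is the converse, and here the plan is to exploit the labelled configuration space machinery flagged in the abstract. First I would build a labelled-configuration model $\mathcal{C}(K)$ of $(D^2,S^1)^K$, or of a space it is homotopy equivalent to after replacing $(D^2,S^1)$ by $(CS^1,S^1)$: particles sit in $S^1$ with labels in a contractible cone and vanish when the label reaches the basepoint, with the constraint imposed by $K$ that the set of vertices carrying active particles must form a face. This space carries a natural James-type filtration $F_1 \subseteq F_2 \subseteq \cdots$ whose subquotients realize the known stable splitting
$$\Sigma (D^2,S^1)^K \simeq \Sigma\bigvee_{\emptyset\neq I\subseteq[n]}\Sigma|K_I|,$$
layer by layer. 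Next I would show that for each $k$ the attaching map $F_{k-1} \hookrightarrow F_k$ is controlled, via the configuration-space presentation, by products and (matric) higher Massey products of classes in $\mathrm{Tor}^+_{\k[v_1,\ldots,v_n]}(\k[K],\k)$; Golodness forces each of these to vanish, making every attaching map null-homotopic. The filtration then splits unstably, producing an equivalence $(D^2,S^1)^K \simeq \bigvee_I \Sigma|K_I|$ which exhibits $(D^2,S^1)^K$ as a wedge of suspensions, hence a co-$H$-space.

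The main obstacle will be the last step, namely identifying the homotopical attaching data with honest Massey products. A priori, vanishing of cohomological Massey products gives null-homotopies only stably, and there can be a tower of secondary obstructions in homotopy groups that cohomology cannot see; a Golod complex is \emph{a priori} allowed to support essential attaching maps invisible to $\Tor$. The role of the labelled-configuration-space presentation is to make each successive obstruction sufficiently rigid and sufficiently geometric that these secondary obstructions collapse into genuine matric Massey products in $\Tor^+_{\k[v_1,\ldots,v_n]}(\k[K],\k)$, which then vanish by hypothesis. Executing this reduction rigorously, and doing it with integral coefficients so as to avoid hidden $p$-local contributions to the obstruction theory, is where the main technical burden of the proof will lie.
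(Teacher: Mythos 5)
Be careful about the status of the statement you are proving: in the paper this is a \emph{conjecture}, not a theorem, and the paper never proves it. What the paper actually establishes is (i) the easy direction (co-$H$-space $\Rightarrow$ Golod, exactly as in your first paragraph), (ii) a large-primes version (Theorem~\ref{TMainStatementPrimes}), (iii) that a \emph{homotopy Golod} condition --- null-homotopy of the suspended inclusions $\Sigma^{|I\cup J|+1}\iota_{I,J}$ --- is necessary (Proposition~\ref{PNecessary}), and (iv) that a strictly stronger \emph{coherently homotopy Golod} condition is sufficient (Theorems~\ref{TMainStatement1} and~\ref{TMainStatement2}). So a correct proof of the full conjecture cannot be extracted from the paper, and your proposal does not supply one either.

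The genuine gap in your plan is precisely the step you defer to ``the main technical burden'': identifying the attaching maps in the configuration-space filtration with (matric) Massey products in $\mathrm{Tor}^+_{\k[v_1,\ldots,v_n]}(\k[K],\k)$ and concluding that Golodness kills them. The paper's own machinery shows what is really needed: by Corollary~\ref{CInverse} the unstable splitting reduces to the null-homotopy of a connecting map $\partial\colon \mb C(\iota_{\mc A,n})\to \Sigma\mc D_{\mc A}(W_{n-1})$, which after the identification in the proof of Theorem~\ref{TMainStatement1} is the map $\Phi_K\colon \Sigma^n|K|\to\Sigma\mc Q_K$; its target assembles \emph{all} the joins $|K_{I_1}\ast\cdots\ast K_{I_m}|$ over ordered partitions of $[n]$, and killing $\Phi_K$ requires null-homotopies of the inclusions $\iota_{I,J}$ that are \emph{coherent} across partitions (a permutohedron's worth of higher homotopies), not merely vanishing of cohomology classes or even of individual stable obstructions. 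Cohomological Golodness only gives triviality of the maps $\iota_{I,J}$ on cohomology; it is not known to yield even the single null-homotopies of $\Sigma\iota_{I,J}$ (this is the paper's open implication ``Golod $\Rightarrow$ homotopy Golod''), let alone the coherences, and the authors explicitly flag this gap as a plausible source of counterexamples to Conjecture~\ref{CGolodConj}. So the last step of your argument is not a technical reduction to be executed; it is the entire open content of the conjecture, and nothing in the configuration-space model forces the secondary obstructions to ``collapse into Massey products'' as you assert.
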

The right-hand implication follows from the general fact that cohomology rings of co-$H$-spaces have trivial cup and Massey products. 
The left-hand implication, if true (or to whichever extent it is true), 
immediately gives a nice explicit combinatorial description for the homotopy type of $(D^2,S^1)^K$.
Namely, $(D^2,S^1)^K$ is a co-$H$-space if and only if 
$$
(D^2,S^1)^K\simeq \cvee{I\subseteq[n]}{}{\Sigma^{|I|+1}|K_I|}.
$$
This was shown by Iriye and Kishimoto~\cite{arXiv:1306.6221},
and follows from a general suspension splitting due to Bahri, Bendersky, Cohen, and Gitler 
(the \emph{BBCG splitting})~\cite{MR2673742}
\begin{equation}
\label{ESplittingBBCG}
\Sigma(D^2,S^1)^K\simeq \cvee{I\subseteq[n]}{}{\Sigma^{|I|+2}|K_I|}.
\end{equation}

Our goal in this paper is to determine the extent to which Conjecture~\ref{CGolodConj} holds.
To do this we consider several simplifications, the first of these being localisation. 
We confirm a \emph{large primes} version of Conjecture~\ref{CGolodConj},
thus reproducing the rational result of Berglund~\cite{BerglundRational} 
without making any significant use of rational homotopy theory.

\begin{thm:local}
Localised at any sufficiently large prime $p$, 
$(D^2,S^1)^K$ is a co-$H$-space if and only if $K$ is Golod over $\mb Z_p$.
\end{thm:local} 

Another simplification is to strengthen the hypothesis on a simplicial complex being Golod.
One way of doing so is to require some suspension of each inclusion $\iota_{I,J}$ to be nullhomotopic 
(instead of only inducing trivial maps on cohomology) for every disjoint and non-empty $I,J\subsetneq [n]$.
In Proposition~\ref{PNecessary} we show that after appropriate suspensions of $\iota_{I,J}$, 
this is a necessary condition for $(D^2,S^1)^K$ to be a co-$H$-space, and we call this the \emph{homotopy Golod} condition.

\begin{thm:htpyGolod}
If $(D^2,S^1)^K$ is a co-$H$-space, then it is homotopy Golod. 
\end{thm:htpyGolod}

Moreover, we will see that under certain additional coherence conditions on these nullhomotopies, 
the homotopy Golod condition becomes sufficient for $(D^2,S^1)^K$ to be a co-$H$-space, 
though possible no longer necessary. We outline this coherence condition as follows. 
Take the natural inclusion $\iota_{I,J}\colon\seqm{|K_{I\cup J}|}{}{|K_I\ast K_J|}$ for disjoint $I$, $J$. When $I$, $J_1$, and $J_2$ are disjoint and non-empty and $\iota_I$ is the identity \seqm{|K_I|}{}{|K_I|}, there is a commutative diagram
\[\diagram
& |K_{I\cup J_1}\ast K_{J_2}|\drto^{(\iota_{I,J_1})\ast\iota_{J_2}}\\
|K_{I\cup J_1\cup J_2}|\urto^{\iota_{I\cup J_1,J_2}}\drto^{\iota_{I,J_1\cup J_2}} && |K_I\ast K_{J_1}\ast K_{J_2}|\\
& |K_{I}\ast K_{J_1\cup J_2}|\urto^{\iota_{I}\ast(\iota_{J_1,J_2})}.
\enddiagram\]
Then in defining the coherent homotopy Golod condition, we require the composites
$$
\seqmm{Cone(\Sigma|K_{I\cup J_1\cup J_2}|)}{\hat\iota_{I\cup J_1,J_2}}{\Sigma|K_{I\cup J_1}\ast K_{J_2}|}{\Sigma(\iota_{I,J_1})\ast\iota_{J_2}}{\Sigma|K_I\ast K_{J_1}\ast K_{J_2}|}
$$
$$
\seqmm{Cone(\Sigma|K_{I\cup J_1\cup J_2}|)}{\hat\iota_{I,J_1\cup J_2}}{\Sigma|K_I\ast K_{J_1\cup J_2}|}{\Sigma\iota_{I}\ast(\iota_{J_1,J_2})}{\Sigma|K_I\ast K_{J_1}\ast K_{J_2}|}
$$
to be homotopic to each other via a homotopy that is fixed on the base $\Sigma|K_{I\cup J_1\cup J_2}|$ of the cone $Cone(\Sigma|K_{I\cup J_1\cup J_2}|)$,
where $\hat\iota_{I\cup J_1,J_2}$ and $\hat\iota_{I,J_1\cup J_2}$ are the extensions given by the nullhomotopies
of $\Sigma\iota_{I\cup J_1,J_2}$ and $\Sigma\iota_{I,J_1\cup J_2}$. One then continues in this manner defining higher coherences for longer joins. 
This bears some resemblance to the coherence of homotopy in Stasheff's higher homotopy associativity of $A_n$-spaces~\cite{MR0158400}. 
In our case, the \emph{associahedron} is replaced with the simplicial complex associated with ordered partitions of the vertex set $[n]$
(in other words, the dual of an order $n$ \emph{permutohedron}), 
while $H$-space multiplication maps are replaced with nullhomotopies of certain reduced diagonal maps, 
which up to homeomorphism are the above inclusions of cones into joins of full subcomplexes. 
This is made precise in Section~\ref{SGolod}, which we call the \emph{coherent homotopy Golod} condition. 
The left-hand implication of the conjecture then holds for coherently homotopy Golod complexes.

\begin{thm:main}
If $K$ is coherently homotopy Golod, then $(D^2,S^1)^K$ is a co-$H$-space.  
In particular, coherently homotopy Golod complexes are Golod.
\end{thm:main}

In particular, notice that the coherent homotopy Golod condition implies that all Massey products vanish in the cohomology of our moment-angle complex. 
The \emph{extractible} complexes defined in~\cite{arXiv:1306.6221} are currently the most general subclass of Golod complexes $K$
for which $(D^2,S^1)^K$ is known to be a co-$H$-space. We will show that extractible complexes are coherently homotopy Golod, 
and give an example of a class of Golod complexes that are coherently homotopy Golod, but which are not extractible.
In a follow-up paper~\cite{BebenGrbic2} we use a weaker form of the coherent homotopy Golod condition to show that the much simpler homotopy Golod condition
is both sufficient and necessary for $(D^2,S^1)^K$ to be a co-$H$-space whenever $K$ is $\frac{n}{3}$-neighbourly.

The main idea in this paper is the following well-known fact due to Ganea:
a space $Y$ is a co-$H$-space if and only if the evaluation map \seqm{\Sigma\Omega Y}{ev}{Y} has a right homotopy inverse.
Our method for constructing right homotopy inverses of \seqm{\Sigma\Omega (D^2,S^1)^K }{ev}{(D^2,S^1)^K} depends on a construction of a certain \emph{scanning map}
$$
\gamma\wcolon\seqm{\mc C((D^1,S^0)^K)}{}{\Omega (D^2,S^1)^K}
$$
together with a suspension splitting $\Sigma\mc C((D^1,S^0)^K)\simeq \bigvee_i \Sigma\mc D_i((D^1,S^0)^K)$.
Here $\mc C((D^1,S^0)^K)$ is a configuration space that generalises classical labelled configuration spaces by allowing particles to collide under certain rules,
and $\mc D_i((D^1,S^0)^K)$ is a certain \emph{quotiented} configuration space.
We remark that there are various constructions of configuration spaces in the literature that are related to these~\cite{2009arXiv0901.2871D,MR1820902,MR1851264,MR2255967,MR2492776}.
The upshot of this one is that the summands $\Sigma\mc D_i((D^1,S^0)^K)$ in the splitting of $\Sigma\Omega(D^2,S^1)^K$ 
are in a sense more combinatorial in their behaviour in comparison to polyhedral products,
allowing us to obtain combinatorial statements about the homotopy type of $(D^2,S^1)^K$. 
This is analogous to the summands in the BBCG splitting~\eqref{ESplittingBBCG} being combinatorial objects
- each of them is known to be homeomorphic to certain \emph{quotiented} moment-angle complexes.

Throughout this paper, $-1$ is taken to be the basepoint of $D^1=[-1,1]$.
Points in $S^1$ are thought of as real numbers in $D^1$ with $-1$ and $1$ identified as the basepoint. 
The suspension $\Sigma X$ of a space $X$ is taken to be the reduced suspension 
$D^1\times X/(\{-1,1\}\times X\cup D^1\times\{\ast\})$ whenever $X$ is basepointed with basepoint $\ast$. 
Otherwise it is the unreduced suspension, in other words, the quotient space of $D^1\times X$ under identifications
$(-1,x)\sim \ast_{-1}$ and $(1,x)\sim \ast_{1}$. 
In any case, $\Sigma X$ is always basepointed, in the unreduced case the basepoint is taken to be $\ast_{-1}$.

Fix a product of connected basepointed spaces $X=X_1\times\cdots\times X_n$ and a subspace $W\subseteq X$.
For any product $Y=Y_1\times\cdots\times Y_n$ of basepointed spaces, 
define the \emph{coordinate smash} $Y\wedge_{X} W$ of $Y$ and $W$ in $X$ to be the subspace of 
$Y\wedge_{X} X=(Y_1\wedge X_1)\times\cdots\times (Y_n\wedge X_n)$ given by
$$
Y\wedge_{X} W=\cset{((y_1,x_1),\ldots,(y_n,x_n))\in Y\wedge_{X} X}{(x_1,\ldots,x_n)\in W}.
$$
In particular, when each $Y_i$ is an $\ell$-sphere $S^\ell$,
we call $Y\wedge_{X} W$ the $\ell$-\emph{fold coordinate suspension of} $W$ in $X$ 
and denote it by $W^\ell$. This is then the subspace of 
$
X^\ell=(\Sigma^\ell X_1)\times\cdots\times (\Sigma^\ell X_n)
$ 
given by
$$
W^\ell=\cset{((t_1,x_1),\ldots,(t_n,x_n))\in X^\ell}{(x_1,\ldots,x_n)\in W}
$$
for pairs $(t_i,x_i)$ in the reduced suspension 
$\Sigma^\ell X_i=D^\ell\times X_i/(\bd D^\ell\times X_i\cup D^\ell\times\{\ast\})$, $D^\ell$ being the unit $\ell$-disk. 
Notice $W^0=W$ and $(W^{\ell_1})^{\ell_2}=W^{\ell_1+\ell_2}$.

\begin{example}
Let $W$ be some union of product subspaces $A_1\times\cdots\times A_n$. 
Then $W^\ell$ is a union of the subspaces $\Sigma^\ell A_1\times\cdots\times \Sigma^\ell A_n$.
In particular, thinking of the $A_i$'s as cells in $X_i$, if $W$ is a subcomplex of $X$, 
then we see that $W^\ell$ is a subcomplex of $X^\ell$.
%In particular, $W^\ell=X^\ell$ when $W=X$ and $W^\ell\cong\Sigma^\ell W$
%when $W$ is an connected subspace of $X_1\vee\cdots\vee X_n\subseteq X$ containing the basepoint. 
\end{example}

\begin{example}
\label{EPolyhedral}
If $(\ul X,\ul A)=((X_1,A_1),\ldots,(X_n,A_n))$ is a sequence of pairs of spaces,  
$K$ is a simplicial complex on $n$ vertices, and $W$ is the \emph{polyhedral product} 
$$
(\ul X,\ul A)^K=\cunion{\sigma\in K}{}{Y^\sigma_1\times\cdots\times Y^\sigma_n}\quad\subseteq\quad X_1\times\cdots\times X_n,
$$
where
$$
Y^\sigma_i=
\begin{cases}
X_i & \mbox{if }i\in\sigma\\
A_i & \mbox{if }i\nin\sigma,
\end{cases}
$$
then $W^\ell$ is the polyhedral product $(\Sigma^\ell \ul X,\Sigma^\ell \ul A)^K\subseteq X^\ell$ of the $n$-tuple
$(\Sigma^\ell \ul X,\Sigma^\ell \ul A)$ of pairs of spaces
$((\Sigma^\ell X_1,\Sigma^\ell A_1),\ldots,(\Sigma^\ell X_n,\Sigma^\ell A_n))$.  
\end{example}

\subsection{Cofibrations and Splittings}

Given any subset $I=\{i_1,\ldots,i_k\}\subseteq [n]= \{1,\ldots,n\}$,
let $W^\ell_I$ be the image of $W^\ell$ under the projection \seqm{X^\ell}{}{X^\ell_I},
and $\hat W^\ell_I$ the image of $W^\ell_I$ under the quotient map \seqm{X^\ell_I}{}{\hat X^\ell_I}, 
where
$$
X^\ell_I=\Sigma^\ell X_{i_1}\times\cdots\times \Sigma^\ell X_{i_k}, 
$$
$$
\hat X^\ell_I=\Sigma^\ell X_{i_1}\wedge\cdots\wedge\Sigma^\ell X_{i_k}.
$$

\begin{definition}
We say $W$ satisfies the \emph{simplicial property} if given any point $x\in W$, 
the point $x'\in X$ obtained from $x$ by replacing some coordinate in $x$ with a basepoint is also a point in $W$.
\end{definition}

\begin{example}
Polyhedral products have the simplicial property.
If $\ell\geq 1$, then $W^\ell$ always has the simplicial property as a subspace of $X^\ell$. 
Also, if each $X_i$ is a basepointed $CW$-complex (i.e. with a fixed basepoint,
built up by attaching cells of dimension $\geq 1$ via basepoint preserving attaching maps), 
and the zero skeleton of $X_i$ contains a single point (the basepoint), then $W$ has the simplicial property. 
\end{example}

Suppose that $\ell>0$, or $\ell=0$ and $W$ satisfies the simplicial property. Then  
$$
W^\ell_I \cong \cset{(x_1,\ldots,x_n)\in W^\ell}{x_i=\ast\mbox{ if }i\nin I}.
$$
We will often think of $W^\ell_I$ as this subspace of $W^\ell$.
Thus, for any $0\leq k\leq n$, we can take the union
\begin{equation}
\label{W_k}
W^\ell_k=\cunion{I\subseteq [n],|I|=k}{}{W^\ell_I}.
\end{equation}
In other words, 
$$
W^\ell_k = \cset{(x_1,\ldots,x_n)\in W^\ell}{\mbox{at least }n-k\mbox{ coordinates } x_i\mbox{ are the basepoint }\ast\in X_i}.
$$
The equality $(W^\ell)_k=(W_k)^\ell$ is clear, so we forget brackets. Take the quotient spaces
$$
\hat W^\ell_k = \frac{W^\ell_k}{W^\ell_{k-1}}. 
$$
and $\hat W^\ell_I=W^\ell_I/(W^\ell_I)_{|I|-1}$.
Applying these constructions to $X$ in place of $W$, 
since $X^\ell_I\cap X^\ell_J=X^\ell_{I\cap J}$ as subspaces of $X^\ell$,  
and $X^\ell_I\cap X^\ell_{k-1}=(X^\ell_I)_{|I|-1}$ if $|I|=k$, then
\begin{equation}
\label{EHomeoSplitting}
\hat X^\ell_k=\frac{X^\ell_k}{X^\ell_{k-1}}=\cvee{I\subseteq [n],|I|=k}{}{\hat X^\ell_I}
\end{equation}
(note not to confuse $X^\ell_k$ with the factors $X_i$ of $X$).
One can think of $\hat W^\ell_k$ as the image of $W^\ell_k$ under the quotient map \seqm{X^\ell_k}{}{\hat X^\ell_k},
and this restricts to \seqm{W^\ell_I}{}{\hat W^\ell_I} for any $|I|=k$. Thus
\begin{equation}
\label{EHomeoSplittingW}
\hat W^\ell_k=\cvee{I\subseteq[n],|I|=k}{}{\hat W^\ell_I}.
\end{equation}
Mapping $\hat W^\ell_I$ via the homeomorphism \seqm{\hat X^\ell_I}{\cong}{\Sigma^{\ell |I|}\hat X_I}
that sends $((t_1,x_1),\ldots,(t_{|I|},x_{|I|}))$ to the point $((t_1,\ldots,t_{|I|}),(x_1,\ldots,x_{|I|}))$,
we see that 
$$
\hat W^\ell_I\cong \Sigma^{\ell |I|}\hat W_I
$$
$$\hat W^\ell_k\cong\Sigma^{\ell k}\hat W_k.
$$
Note $W^\ell_I$ and $W^\ell_k$ are subcomplexes of $W^\ell$ if each $X_i$ is a $CW$-complex, and $W$ is a subcomplex of $X$.

For the remainder of this section we assume that each $X_i$ is a connected basepointed $CW$-complex, 
and $W$ is a connected subcomplex of their product $X$ satisfying the simplicial property. 
The next two propositions generalise the polyhedral product splittings in~\cite{MR2673742} and~\cite{arXiv:1306.6221}.
For example, we may recover the BBCG suspension splitting of $W^1=(D^2,S^1)^K$ by applying the
homeomorphism $\hat W_I\cong \Sigma |K_I|$ given in~\cite{MR2673742}.

\begin{proposition}
\label{PSuspensionSplitting}
For $1\leq k\leq n$ and either $\ell> 0$, or $\ell=0$ and $W$ satisfying the simplicial property, there exists a suspension splitting
$$
\Sigma W^\ell_k \simeq \cvee{1\leq j\leq k}{}{\Sigma\hat W^\ell_j}
\quad\cong\quad\cvee{I\subseteq[n],|I|\leq k}{}{\Sigma\hat W^\ell_I}
\quad\cong\quad\cvee{I\subseteq[n],|I|\leq k}{}{\Sigma^{\ell|I|+1}\hat W_I}.
$$
\end{proposition}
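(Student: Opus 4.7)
The plan is to reduce the statement to its first (and hardest) equivalence $\Sigma W^\ell_k \simeq \bigvee_{|I|\leq k}\Sigma\hat W^\ell_I$: the second identification in the displayed equation is the suspension of~\eqref{EHomeoSplittingW}, and the third is the coordinate-suspension homeomorphism $\hat W^\ell_I \cong \Sigma^{\ell|I|}\hat W_I$ already established in the text. I would prove this first equivalence by a double induction, outer on the number $n$ of ambient factors, inner on $k$.

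The base case $k=1$ is immediate from the simplicial property, which forces distinct $W^\ell_{\{i\}}$ and $W^\ell_{\{j\}}$ to meet only at the basepoint, whence $W^\ell_1 = \bigvee_i W^\ell_{\{i\}} = \bigvee_i \hat W^\ell_{\{i\}}$ already as spaces. For $k>1$, I would use the cofibration
$$W^\ell_{k-1} \hookrightarrow W^\ell_k \twoheadrightarrow \hat W^\ell_k \cong \bigvee_{|I|=k}\hat W^\ell_I,$$
show it splits after a single suspension as $\Sigma W^\ell_k \simeq \Sigma W^\ell_{k-1}\vee\Sigma\hat W^\ell_k$, and then combine with the inductive hypothesis for $W^\ell_{k-1}$. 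To split the cofibration I would construct sections $\sigma_I\colon\Sigma\hat W^\ell_I\to\Sigma W^\ell_k$ of the suspended quotient onto each wedge summand: view $W^\ell_I$ as a subcomplex of the smaller product $X^\ell_I=\prod_{i\in I}\Sigma^\ell X_i$ (still with the simplicial property), apply the proposition at its top level to $W^\ell_I$ in this smaller ambient (which is covered by the outer induction when $|I| < n$) to obtain $\Sigma W^\ell_I\simeq\bigvee_{J\subseteq I}\Sigma\hat W^\ell_J$, and compose the inclusion of the $J=I$ summand with $\Sigma W^\ell_I\hookrightarrow\Sigma W^\ell_k$. A short diagram chase using the wedge decomposition from~\eqref{EHomeoSplittingW} confirms that $\sigma_I$ followed by the quotient to $\Sigma\hat W^\ell_k$ recovers the inclusion of the $I$-th wedge summand.

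The main obstacle is the ``corner'' case $k=n$ of the outer induction, where $|I|=n$ and no strictly smaller ambient product is available, so the section $\Sigma\hat W^\ell\to\Sigma W^\ell$ must be produced directly. Here I would appeal to the classical stable splitting of products $\Sigma X^\ell\simeq\bigvee_{\emptyset\neq I}\Sigma\hat X^\ell_I$: the wedge of suspended projections $\Sigma X^\ell\to\Sigma\hat X^\ell_I$ restricts, by the simplicial property, to a map $\Phi\colon\Sigma W^\ell\to\bigvee_{\emptyset\neq I}\Sigma\hat W^\ell_I$, and one verifies $\Phi$ is an equivalence either by constructing an explicit homotopy inverse (the ``commutator''-type inclusions used in the classical proof preserve the simplicial property, hence produce an inverse landing inside $\Sigma W^\ell$) or, more concisely, by checking that $\Phi$ is an isomorphism on cellular homology via the decomposition of product cells by their support $\{i : c_i\neq\ast\}$ and invoking Whitehead's theorem, every space in sight being a simply-connected suspension of a connected CW-complex. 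This combinatorial core is analogous to the BBCG suspension splitting of polyhedral products and closes the induction.
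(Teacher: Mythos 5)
Your proposal is correct in substance but takes a genuinely different route from the paper. The paper runs a single induction on $k$: the splitting map $h_k$ on $\Sigma W^\ell_k$ is the co-$H$ sum of the suspended projection--quotient maps $\Sigma q_I$, which exist for \emph{every} $I$ (including $I=[n]$, since $q_I$ is just the restriction of $X^\ell\to X^\ell_I\to \hat X^\ell_I$), so there is no exceptional corner case; one then compares the suspended cofibration $\Sigma W^\ell_{k-1}\to\Sigma W^\ell_k\to\Sigma\hat W^\ell_k$ with the wedge decomposition \eqref{EHomeoSplittingW} and concludes by homology plus Whitehead, with no sections and no chain-level computation. You instead split each cofibration by explicit sections $\sigma_I$, which forces the outer induction on $n$ and the separate $|I|=n$ argument. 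Three caveats: (i) to know that $\sigma_I$ is compatible with the quotient you must carry the stronger inductive statement that the \emph{canonical} map (wedge of $\Sigma q_J$) is the equivalence; a bare abstract splitting of $\Sigma W^\ell_I$ need not interact with the quotient map. (ii) Your corner-case map $\Phi$ is exactly the paper's $h_n$, and the cellular check must be run as a filtration by support size rather than a direct sum of chain complexes, since differentials of product cells can strictly decrease support when some $X_i$ has non-basepoint $0$-cells; the map is filtration-preserving and an isomorphism on associated graded, hence a homology isomorphism. (iii) The ``commutator-type inclusions'' alternative for an explicit inverse is not substantiated and should be dropped. Note also that once (ii) is carried out it applies verbatim to every $W^\ell_k$, proving the proposition for all $k$ at once and rendering the double induction and the sections unnecessary; what your route buys is an explicit chain-level picture of the splitting, while the paper's buys a shorter argument that never leaves the homotopy category.
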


\begin{proof}
Since $W^\ell_k$ has the simplicial property, given a subset $I=\{i_1,\ldots,i_s\}\subseteq [n]$, we can define a map
$q_I\wcolon\seqmm{W^\ell_k}{}{W^\ell_I}{}{\hat W^\ell_I\cong\Sigma^{\ell|I|}\hat W_I}$  
as the restriction of the composite \seqmm{X^\ell}{}{X^\ell_I}{}{\hat X^\ell_I} 
of the projection and quotient map to $W^\ell_k$. 
Notice that $q_I$ is the composite \seqmm{W^\ell_k}{}{\hat W^\ell_k}{\hat q_I}{\hat W^\ell_I} when $|I|=k$, 
where $\hat q_I$ is the quotient onto the summand $\hat W^\ell_I$ with respect to homeomorphism~\eqref{EHomeoSplittingW}.

We proceed by induction on $k$. Assume that for some $m<k$ a co-$H$-space sum
$$
h_m\wcolon\seqmm{\Sigma W^\ell_m}{pinch}{\cvee{I\subseteq[n],|I|\leq m}{}{\Sigma W^\ell_m}}
{\vee \Sigma q_I}{\cvee{I\subseteq[n],|I|\leq m}{}{\Sigma\hat W^\ell_I}}
$$
is a homotopy equivalence. This is trivial when $m=1$. 
We assume the order in which these co-$H$-space sums $h_j$ are taken is such that the maps $\Sigma q_I$ are summed before 
the maps $\Sigma q_J$ when $|I|<|J|$. Then there is a homotopy commutative diagram of cofibration sequences
\begin{equation}
\label{DCofib}
\diagram
\Sigma W^\ell_m\rto^{}\dto^{h_m} & \Sigma W^\ell_{m+1}\rto^{}\dto^{h_{m+1}} & \Sigma\hat W^\ell_{m+1}\dto^{\hat h_{m+1}}\\
\cvee{I\subseteq [n],|I|\leq m}{}{\Sigma\hat W^\ell_I}\rto^{} &
\cvee{I\subseteq [n],|I|\leq m+1}{}{\Sigma\hat W^\ell_I}\rto^{} &
\cvee{I\subseteq[n],|I|=m+1}{}{\Sigma\hat W^\ell_I}
\enddiagram
\end{equation}
where the bottom horizontal maps are respectively the summand-wise inclusion and the quotient map onto the appropriate summands. 
The top maps are the suspended inclusion and quotient. 
Since for $|I|=m+1$ the composite of the first two maps in the sequence
$\seqmmm{X^\ell_m}{}{X^\ell_{m+1}}{}{\hat X^\ell_{m+1}}{}{\hat X^\ell_I}$ is the constant map
(where the last map is the quotient map with respect to splitting \eqref{EHomeoSplitting}),
its restriction \seqmm{W^\ell_m}{}{W^\ell_{m+1}}{q_I}{\hat W^\ell_I} is the constant map.
Then since the suspended inclusion \seqm{\Sigma W^\ell_m}{}{\Sigma W^\ell_{m+1}} is a co-$H$-map,
and $h_m$ and $h_{m+1}$ are the co-$H$-space sums of the maps $\Sigma q_I$ as defined above, 
we see that the left-hand square commutes up to homotopy.
The right-hand map $\hat h_{m+1}$ is taken to be a co-$H$-space sum of the composites
$\Sigma\hat q_I\colon\seqmm{\Sigma\hat W^\ell_{m+1}}{\cong}{\bigvee_{|I|=m+1}{}{\Sigma\hat W^\ell_I}}{}{\Sigma\hat W^\ell_I}$ over all $|I|=m+1$,
where the first map is the suspension of the homeomorphism~\eqref{EHomeoSplittingW}, and the last map the suspended quotient map.
The first map being a homeomorphism, we see $\hat h_{m+1}$ induces an isomorphism on homology, thus is a homotopy equivalence.
We assume the co-$H$-space sum $\hat h_{m+1}$ is taken in the same order as the last maps $\Sigma q_I$ for $|I|=m+1$ 
are taken in the co-$H$-space sum $h_{m+1}$. 
Then since $\Sigma q_I$ is the composite of the suspended quotient map \seqm{\Sigma W^\ell_{m+1}}{}{\Sigma\hat W^\ell_{m+1}} and $\Sigma\hat q_I$,
and the suspended quotient map being a co-$H$-map, we see that the right-hand square commutes up to homotopy.

The left-hand and right-hand vertical maps being homotopy equivalences, $h_{m+1}$ induces an isomorphism of homology groups. 
All spaces being simply connected $CW$-complexes, $h_{m+1}$ is a homotopy equivalence. This completes the induction.
\end{proof}

\begin{lemma}
\label{LSimplyConnected}
$W^\ell_k$ is simply connected for $\ell\geq 2$ and each $k\leq n$.
Moreover, if $W$ is connected, then $W^1_k$ is also simply connected.
\end{lemma}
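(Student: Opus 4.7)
The plan is to induct on $k$. The base case $k=0$ is immediate, since $W^\ell_0=\{\ast\}$ is just the basepoint. For the inductive step I would exploit the inclusion of CW subcomplexes $W^\ell_{k-1}\hookrightarrow W^\ell_k$, whose quotient, by~\eqref{EHomeoSplittingW} together with the homeomorphism $\hat W^\ell_I\cong \Sigma^{\ell|I|}\hat W_I$ established just above the lemma, is
\[
W^\ell_k/W^\ell_{k-1}=\hat W^\ell_k\cong \cvee{I\subseteq[n],|I|=k}{}{\Sigma^{\ell k}\hat W_I}.
\]

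The workhorse is the standard van Kampen consequence: if $(B,A)$ is a CW pair with $A$ simply connected, then $\pi_1(B)\cong \pi_1(B/A)$, and $B$ is path-connected whenever $A$ and $B/A$ are. Applying this to $(W^\ell_k,W^\ell_{k-1})$ and invoking the inductive hypothesis on $W^\ell_{k-1}$ reduces the step to showing that $\hat W^\ell_k$ is itself simply connected.

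For that, I would use that the reduced suspension of any non-empty pointed space is path-connected, so iterating shows $\Sigma^m Y$ is simply connected whenever $m\geq 2$. Hence each summand $\Sigma^{\ell k}\hat W_I$ is simply connected as long as $\ell k\geq 2$, and a wedge of simply connected spaces is simply connected. This uniformly handles the entire $\ell\geq 2$ claim as well as the case $\ell=1$, $k\geq 2$.

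The main obstacle, and the only point where the connectedness hypothesis on $W$ is genuinely required, is the remaining case $\ell=1$, $k=1$. Here $\hat W_{\{i\}}=W_{\{i\}}$, so $\hat W^1_1\cong\cvee{i=1}{n}{\Sigma W_{\{i\}}}$, and $\Sigma W_{\{i\}}$ is simply connected exactly when $W_{\{i\}}$ is path-connected. Since each $W_{\{i\}}$ is the continuous image of $W$ under the projection $X\to X_i$, path-connectedness of $W$ descends to each $W_{\{i\}}$, making every summand simply connected and thereby completing the base case $k=1$ of the induction.
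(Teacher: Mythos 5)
Your argument is correct, but it is not the route the paper takes: the paper proves the lemma by a direct cell-structure argument, with no induction and no use of the wedge splitting. For $\ell\geq 2$ it simply observes that $X^\ell$, and hence its subcomplex $W^\ell_k$, has no $1$-cells; for $\ell=1$ it notes that every cell of $X^1$ is a product of cells of the $\Sigma X_i$, so the $1$-skeleton of $W^1_k$ lies over the $0$-skeleton of $W_k$, and then, using connectedness of $W_k$, it writes down an explicit nullhomotopy of that $1$-skeleton inside $W^1_k$. You instead induct along the filtration, combining the CW pair $(W^\ell_k,W^\ell_{k-1})$ with the identification $\hat W^\ell_k\cong\bigvee_{|I|=k}\Sigma^{\ell k}\hat W_I$ coming from \eqref{EHomeoSplittingW}, and the van Kampen consequence that collapsing a simply connected subcomplex does not change $\pi_1$. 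Both routes are sound; yours relies only on homeomorphisms established before the lemma (and the lemma is not used to prove them, so there is no circularity), and it isolates cleanly the single place where connectedness of $W$ enters, namely the summands $\Sigma W_{\{i\}}$ at $\ell=k=1$, where connectedness of the projections $W_{\{i\}}$ makes the suspensions simply connected. What the paper's proof buys is economy and self-containedness: the $\ell\geq 2$ case is a one-line skeletal observation requiring no induction and no quotient/van Kampen input, at the cost of an explicit homotopy construction in the $\ell=1$ case, whereas your version uniformly reduces everything to standard facts about suspensions and wedges of CW complexes.
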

\begin{proof}
Since $X^\ell$ has no $1$-cells when $\ell\geq 2$, neither does its subcomplex $W^\ell_k$, so it is simply-connected.

Suppose $W$ is connected. Then each $W_I$ is connected, and since $W$ has the simplicial property, 
the $W_I$'s intersect at the basepoint, so $W_k$ is also connected. 
Notice the $1$-skeleton of $\Sigma X_i$ consists of points $(t,y)$ such that $y$ is in the $0$-skeleton of $X_i$. 
Since cells in $X^1$ are products of cells in each $\Sigma X_i$, 
the $1$-skeleton of the subcomplex $W^1_k\subseteq X^1$ is contained in the subspace $V\subseteq W^1_k=(W_k)^1$ consisting of points
$z=((t_1,x_1),\ldots,(t_n,x_n))$ such that $x=(x_1,\ldots,x_n)$ is a point in the $0$-skeleton of $W_k$.
Notice that $V$ is a wedge of subspaces $V_x=\cset{((t_1,x_1),\ldots,(t_n,x_n))}{0\leq t_i\leq 1}$ for each of the disjoint points $x$ in the $0$-skeleton of $W_k$.
Since $W_k$ is connected, we can define a nullhomotopy of $V_x$ in $W^1_k$ by picking a path in $W^1_k$ from $x$ to the basepoint 
$(\ast,\ldots,\ast)$ to define a homotopy of $z=((t_1,x_1),\ldots,(t_n,x_n))$ to the basepoint $((t_1,\ast),\ldots,(t_n,\ast))\sim\ast$. 
This therefore defines a nullhomotopy of $V$ in $W^1_k$, and since the $1$-skeleton of $W^1_k$ is contained in $V$, a nullhomotopy of its $1$-skeleton inside $W^1_k$. Thus $W^1_k$ is simply connected.  
\end{proof}

The following folk statement will be useful, and for completeness we include a proof.

\begin{lemma}
\label{LCoMult}
If $Y$ is a co-$H$-space, and $\psi\colon\seqm{Y}{}{Y\vee Y}$ its comultiplication,
then $\Sigma\psi$ is homotopic to the pinch map $\seqm{\Sigma Y}{pinch}{\Sigma Y\vee \Sigma Y}$.
\end{lemma}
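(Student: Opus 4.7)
The plan is to post-compose $\Sigma\psi$ and the pinch map $\nu$ with the suspended wedge-into-product inclusion $\Sigma j$, where $j\colon Y\vee Y\hookrightarrow Y\times Y$, and then verify the resulting equality of maps into $\Sigma(Y\times Y)$ coordinatewise using the James stable splitting, exploiting that $\Sigma j$ is a split monomorphism.

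More precisely, the James suspension splitting
$$
\Sigma(Y\times Y)\simeq \Sigma Y\vee \Sigma Y\vee \Sigma(Y\wedge Y)
$$
identifies $\Sigma j$ with the inclusion of the first two wedge summands. Hence $(\Sigma j)_*\colon [\Sigma Y,\Sigma Y\vee \Sigma Y]\to [\Sigma Y,\Sigma(Y\times Y)]$ is injective, and it suffices to show $\Sigma j\circ \Sigma\psi\simeq \Sigma j\circ \nu$.

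I would check this coordinatewise. Writing $\pi_k\colon Y\times Y\to Y$ and $p_k\colon Y\vee Y\to Y$ for the product and wedge projections (so that $\pi_k\circ j=p_k$), post-compose with $\Sigma\pi_k$ to detect the $k$-th $\Sigma Y$-summand: on the $\Sigma\psi$ side one obtains $\Sigma(p_k\psi)\simeq\mathrm{id}_{\Sigma Y}$ by the co-$H$-space axiom $p_k\psi\simeq\mathrm{id}_Y$, and on the $\nu$ side one obtains $\Sigma p_k\circ \nu\simeq\mathrm{id}_{\Sigma Y}$ since pinching $\Sigma Y$ at the equator and then collapsing one summand to the basepoint is a standard representative of the identity (contract the collapsed half and rescale). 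For the $\Sigma(Y\wedge Y)$-summand, post-compose with $\Sigma\rho$, where $\rho\colon Y\times Y\to Y\wedge Y$ is the smash quotient; both composites are null-homotopic because $\rho\circ j$ is the constant map and both of our maps factor through $\Sigma(Y\vee Y)$. With all three coordinates in agreement, $\Sigma j\circ \Sigma\psi\simeq \Sigma j\circ \nu$, and injectivity yields $\Sigma\psi\simeq \nu$.

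I do not anticipate a substantive obstacle. The only non-routine input is the James splitting of $\Sigma(Y\times Y)$, which is classical and meshes naturally with the coordinate-suspension splittings already developed in Section~2 (cf.\ Proposition~\ref{PSuspensionSplitting}); the remainder of the argument is formal bookkeeping.
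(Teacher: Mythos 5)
Your reduction is sound up to a point: $\Sigma j$ does admit a left homotopy inverse (the pinch map on $\Sigma(Y\times Y)$ followed by the wedge of the suspended projections already provides one, with no need for the James splitting), so $(\Sigma j)_*$ is injective and it would indeed suffice to prove $\Sigma j\circ\Sigma\psi\simeq\Sigma j\circ\nu$. The gap is the final detection step. Agreement of the composites with $\Sigma\pi_1$, $\Sigma\pi_2$ and $\Sigma\rho$ does not imply that two maps $\Sigma Y\to\Sigma(Y\times Y)$ are homotopic: after the splitting the target is a wedge $\Sigma Y\vee\Sigma Y\vee\Sigma(Y\wedge Y)$, not a product, and homotopy classes of maps into a wedge are not determined by their projections onto the summands. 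Concretely, the comparison homomorphism $[\Sigma Y,\,\Sigma Y\vee\Sigma Y\vee\Sigma(Y\wedge Y)]\to[\Sigma Y,\Sigma Y]\times[\Sigma Y,\Sigma Y]\times[\Sigma Y,\Sigma(Y\wedge Y)]$ has kernel coming from the homotopy fibre of the wedge-into-product map, i.e.\ Whitehead-product--type elements, and this kernel is nonzero in general (already $\pi_3(S^2\vee S^2)\to\pi_3(S^2\times S^2)$ kills the Whitehead product $[\iota_1,\iota_2]$). Note that if projections onto wedge summands did detect homotopy classes, you would not need $\Sigma j$ at all: $\Sigma\psi$ and the pinch map both project to the identity on each $\Sigma Y$ summand, so the lemma would be immediate --- which is precisely the inference that cannot be justified.

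The repair is to run the coordinate argument \emph{before} suspending, where the target genuinely is a product: the co-$H$ identities $p_k\circ\psi\simeq\mathrm{id}$ give $j\circ\psi\simeq\vartriangle$ as maps $Y\to Y\times Y$, since maps into a product are detected by their coordinates (the paper writes this homotopy out explicitly as $G_t=(H_t,H'_t)$). Then suspend and apply the explicit left homotopy inverse $f$ of $\Sigma j$ described above to both sides: $f\circ\Sigma j\circ\Sigma\psi\simeq\Sigma\psi$, while $f\circ\Sigma\vartriangle$ is computed directly to be the pinch map, since $\pi_k\circ\vartriangle=\mathrm{id}$. This is exactly the paper's proof; your outline contains the right ingredients (the split monomorphism $\Sigma j$, the co-$H$ identities, the projection computations), but assembles them in an order in which the needed detection principle is unavailable.
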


\begin{proof}
Let $H$ and $H'$ be the homotopies to the identity of the composites 
\seqmm{Y}{\psi}{Y\vee Y}{\ID\vee\ast}{Y} and \seqmm{Y}{\psi}{Y\vee Y}{\ast\vee\ID}{Y}.
The composite \seqmm{Y}{\psi}{Y\vee Y}{\iota}{Y\times Y}, where $\iota$ is the canonical inclusion, is homotopic to the diagonal map \seqm{Y}{\vartriangle}{Y\times Y}
via the homotopy $G_t(y)=(H_t(y),H'_t(y))$.  
Take the composite 
$$
f\wcolon\seqmm{\Sigma(Y\times Y)}{pinch}{\Sigma(Y\times Y)\vee \Sigma(Y\times Y)}{}{\Sigma Y\vee\Sigma Y}
$$
where the last map is the wedge sum of the suspended projection maps 
$\seqm{\Sigma(Y\times Y)}{\Sigma \pi_i}{\Sigma Y}$, $i=1,2$, 
projecting onto the first factor and the second factor respectively. 
The composite $f$ is a left homotopy inverse of $\Sigma\iota$. Therefore
$
\seqmmm{\Sigma Y}{\Sigma\psi}{\Sigma (Y\vee Y)}{\Sigma \iota}{\Sigma(Y\times Y)}{f}{\Sigma Y\vee\Sigma Y}
$
is homotopic to $\Sigma\psi$, and 
$
\seqmm{\Sigma Y}{\Sigma\vartriangle}{\Sigma (Y\times Y)}{f}{\Sigma Y\vee\Sigma Y}
$
is the pinch map \seqm{\Sigma Y}{pinch}{\Sigma Y\vee\Sigma Y}. These two composites are homotopic since $\Sigma\iota\circ\Sigma\psi=\Sigma(\iota\circ\psi)$ is homotopic to $\Sigma\vartriangle$.
\end{proof}

\begin{proposition}
\label{PSplitting}
Let $\ell\geq 0$ and $1\leq k\leq n$, and consider the following: 
\begin{itemize}
\item[(a)] $W^\ell_k$ is a co-$H$-space;
\item[(b)] there is a splitting
$$
W^\ell_k \simeq \cvee{0\leq j\leq k}{}{\hat W^\ell_j}
\cong\cvee{I\subseteq[n],|I|\leq k}{}{\hat W^\ell_I}
\cong\cvee{I\subseteq[n],|I|\leq k}{}{\Sigma^{\ell|I|}\hat W_I};
$$
\item[(c)] the quotient map \seqm{W^\ell_m}{}{\hat W^\ell_m} has a right homotopy inverse for each $m\leq k$;
\item[(d)] $W^\ell_I$ is a simply connected co-$H$-space for each $I\subseteq[n]$ with $|I|\leq k$;
\item[(e)] the quotient map \seqm{W^\ell_I}{}{\hat W^\ell_I} has a right homotopy inverse for each $I\subseteq[n]$ with $|I|\leq k$.
\end{itemize}
Then $(a)\Leftrightarrow(b)\Leftrightarrow(c)\Leftrightarrow(d)\Leftrightarrow(e)$ when $\ell\geq 2$, 
or when $\ell=1$ and $W$ is connected. 
Generally, $(a)\Rightarrow(b)$ provided $W^\ell_j$ is simply connected for $j\leq k$. 
\end{proposition}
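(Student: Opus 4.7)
The plan is to establish the loop
$(a) \Rightarrow (b) \Leftrightarrow (c)$
together with the side loop
$(a) \Rightarrow (d) \Rightarrow (e) \Rightarrow (c)$,
and close everything with the easy implication $(b) \Rightarrow (a)$.
The hypothesis that $\ell \ge 2$, or that $\ell = 1$ with $W$ connected, ensures via Lemma~\ref{LSimplyConnected} that $W^\ell_j$ is simply connected for every $j \le k$, which is precisely what I need to upgrade homology isomorphisms to homotopy equivalences through Whitehead's theorem. The only implication that survives without any connectedness hypothesis is the main one, $(a) \Rightarrow (b)$, and this is what yields the ``in general'' clause.

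The heart of the argument, and the main obstacle, is $(a) \Rightarrow (b)$. Given a comultiplication $\psi$ on $W^\ell_k$, for each $I \subseteq [n]$ with $|I| \le k$ let $q_I \colon W^\ell_k \to \hat W^\ell_I$ be the restriction of the composite $X^\ell \to X^\ell_I \to \hat X^\ell_I$, exactly as in the proof of Proposition~\ref{PSuspensionSplitting}. Using $\psi$ and its iterates, form the co-$H$-space sum $h \colon W^\ell_k \to \bigvee_{|I| \le k} \hat W^\ell_I$ of the maps $q_I$, in the same order used in Proposition~\ref{PSuspensionSplitting}. Applying Lemma~\ref{LCoMult} repeatedly, the suspensions of $\psi$ and of its iterates become pinch maps, so $\Sigma h$ is the pinch map followed by the wedge of $\Sigma q_I$'s, which is precisely the equivalence of Proposition~\ref{PSuspensionSplitting}. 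Hence $\Sigma h$ is a homotopy equivalence, $h$ is a homology isomorphism, and Whitehead's theorem finishes the argument since both sides are simply connected CW-complexes. The same argument yields the ``in general'' clause verbatim.

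For $(b) \Rightarrow (a)$, each summand $\hat W^\ell_I \cong \Sigma^{\ell|I|} \hat W_I$ is a suspension (when $\ell \ge 1$ and $|I| \ge 1$), so the wedge is a co-$H$-space. For $(b) \Rightarrow (c)$, the splitting identifies the quotient map $W^\ell_m \to \hat W^\ell_m = \bigvee_{|I|=m} \hat W^\ell_I$ with the projection onto the top wedge summands, which has an obvious right inverse. Conversely, for $(c) \Rightarrow (b)$, I induct on $m \le k$; the base case $m = 1$ is immediate since the simplicial property forces $W^\ell_1 = \bigvee_i \hat W^\ell_{\{i\}}$. The inductive step uses the cofibration $W^\ell_{m-1} \to W^\ell_m \to \hat W^\ell_m$: the section provided by (c) kills the connecting map, so the long exact sequence in homology splits, making the natural map $W^\ell_{m-1} \vee \hat W^\ell_m \to W^\ell_m$ a homology isomorphism; simple connectivity plus Whitehead promotes this to a homotopy equivalence, and combining with the inductive hypothesis yields (b).

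For the side chain, $(a) \Rightarrow (d)$ follows because the map $\pi_I \colon X^\ell \to X^\ell$ that sets coordinates outside $I$ to the basepoint restricts, by the simplicial property, to a strict retraction $W^\ell_k \to W^\ell_I$; retracts of co-$H$-spaces are co-$H$-spaces (with comultiplication $(r \vee r) \circ \psi \circ s$), and retracts of simply connected spaces are simply connected. For $(d) \Rightarrow (e)$, I apply the already-proven $(a) \Rightarrow (b) \Rightarrow (c)$ with $W_I$ in place of $W$ and $k$ replaced by $|I|$: the top case of (c) for $W_I$ is exactly (e) at the index $I$. Finally, for $(e) \Rightarrow (c)$, assemble the right inverses $s_I \colon \hat W^\ell_I \to W^\ell_I$ (for $|I| = m$) composed with the inclusions $W^\ell_I \hookrightarrow W^\ell_m$ into a single map $\hat W^\ell_m = \bigvee_{|I|=m} \hat W^\ell_I \to W^\ell_m$. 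Because $W^\ell_I \cap W^\ell_{m-1} = (W^\ell_I)_{|I|-1}$, the restriction of the quotient $W^\ell_m \to \hat W^\ell_m$ to $W^\ell_I$ factors as $W^\ell_I \to \hat W^\ell_I \hookrightarrow \hat W^\ell_m$, so composing with the wedged section gives the identity on each wedge summand, establishing the desired right inverse.
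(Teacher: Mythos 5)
Most of your argument tracks the paper's proof: your $(a)\Rightarrow(b)$ is the paper's construction of the co-$H$-sum of the maps $q_I$, compared after suspension (via Lemma~\ref{LCoMult}) with the equivalence of Proposition~\ref{PSuspensionSplitting}; your $(b)\Rightarrow(a)$, $(a)\Rightarrow(d)$, $(e)\Rightarrow(c)$ and $(c)\Rightarrow(b)$ likewise match. The genuine problem is your one-line proof of $(b)\Rightarrow(c)$. Statement $(b)$ supplies only an abstract homotopy equivalence $W^\ell_k\simeq \bigvee_{|I|\le k}\hat W^\ell_I$, and such an equivalence does not by itself ``identify the quotient map with the projection onto the top wedge summands'': nothing in $(b)$ says the equivalence respects the filtration, and an arbitrary equivalence can mix summands of different filtration level sitting in the same homological degree, so the composite $\bigvee_{|I|=k}\hat W^\ell_I\hookrightarrow\bigvee_{|I|\le k}\hat W^\ell_I\to W^\ell_k\to \hat W^\ell_k$ need not be an equivalence without further argument. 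Moreover $(c)$ asks for a right homotopy inverse of $W^\ell_m\to\hat W^\ell_m$ for \emph{every} $m\le k$, whereas $(b)$ only splits $W^\ell_k$; you never produce splittings of $W^\ell_m$ for $m<k$. Even if you intend the specific splitting $h$ built in your $(a)\Rightarrow(b)$, whose top components are $\hat q_I\circ q$, you would still need to know that projecting $h$ onto the top summands is homotopic to the quotient $q$ --- essentially that $q$ interacts as a co-$H$-map with the chosen comultiplication --- and unsuspended co-$H$-sum manipulations of this kind are exactly what require care here.

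The paper closes this point by routing $(b)\Rightarrow(c)$ through $(a)$ (harmless, since $(b)\Rightarrow(a)$ is clear): assuming $(a)$ it constructs, for each $m\le k$, the equivalence $h'_m\colon W^\ell_m\to\bigvee_{|I|\le m}\hat W^\ell_I$ whose components are the $q_I$ precomposed with the inclusion $W^\ell_m\to W^\ell_k$; then, using the right-hand square of diagram~\eqref{DCofib}, it exhibits a right homotopy inverse of the suspended quotient $\Sigma W^\ell_m\to\Sigma\hat W^\ell_m$ as a composite all of whose maps are suspensions (taking $h_m^{-1}=\Sigma(h'_m)^{-1}$), desuspends it, and finally checks by a homology argument that the desuspended composite $\hat W^\ell_m\to W^\ell_m\to\hat W^\ell_m$ is an equivalence, which produces the required right inverse. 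You need this (or an equivalent) desuspension argument; as written, your $(b)\Rightarrow(c)$ --- and with it $(d)\Rightarrow(e)$, which invokes it --- is a genuine gap.
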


\begin{proof}

When $\ell\geq 2$, or $\ell=1$ and $W$ is connected, 
Lemma~\ref{LSimplyConnected} ensures that all our spaces are simply connected.
The implication $(b)\Rightarrow(a)$ is clear when $\ell\geq 1$. 
To see that $(a)\Rightarrow(b)$ holds, for $m\leq k$, consider the composite
$$
h'_m\wcolon\seqmmm{W^\ell_m}{}{W^\ell_k}{\psi'}{\cvee{I\subseteq[n],|I|\leq m}{}{W^\ell_k}}{\vee q_I}
{\cvee{I\subseteq[n],|I|\leq m}{}{\hat W^\ell_I}},
$$
where the first map is the inclusion and $\psi'$ is given by iterating the comultiplication \seqm{W^\ell_k}{\psi}{W^\ell_k\vee W^\ell_k} 
in some order. 
By Lemma~\ref{LCoMult}, $\Sigma\psi'$ is homotopic to the iterated pinch map, so by naturality the composite of $\Sigma\psi'$
with the suspended inclusion \seqm{\Sigma W^\ell_m}{}{\Sigma W^\ell_k} is an iterated pinch map.
The map $\Sigma h'_m$ is then homotopic to the homotopy equivalence $h_m$ in the proof of Proposition~\ref{PSuspensionSplitting},
meaning $h'_m$ induces isomorphisms of homology groups, and since this is a map between simply-connected $CW$-complexes,
$h'_m$ is a homotopy equivalence. 

Suppose $(b)$ holds. From the commutativity of the right square in diagram~(\ref{DCofib}), 
we may take as a right homotopy inverse of the suspended quotient map 
\seqm{\Sigma W^\ell_m}{}{\Sigma \hat W^\ell_m} the composite
$$
f\wcolon\seqmmm{\Sigma \hat W^\ell_m}{\cong}{\cvee{I\subseteq[n],|I|=m}{}{\Sigma\hat W^\ell_I}}{}
{\cvee{I\subseteq[n],|I|\leq m}{}{\Sigma\hat W^\ell_I}}{h^{-1}_m}{\Sigma W^\ell_m},
$$
where the second map is the inclusion.
Since $h_m\simeq \Sigma h'_m$ and $h_m'$ is a homotopy equivalence, we may take $h^{-1}_m=\Sigma (h'_m)^{-1}$.
All the maps in this composite being suspensions, $f$ desuspends to a map $f'$ such that the composite 
\seqmm{\hat W^\ell_m}{f'}{W^\ell_{m}}{}{\hat W^\ell_m} induces isomorphisms on homology groups.
Since each of the summands in the splitting of $\hat W^\ell_m$ are summands in the splitting of $\Sigma W^\ell_m$, 
this last composite must be homotopic to the identity. Thus $(b)\Rightarrow(c)$ holds. 

Since each of the cofibrations \seqmm{W^\ell_{m-1}}{}{W^\ell_m}{}{\hat W^\ell_m} 
for $m\leq k$ trivializes when there is right homotopy inverse of \seqm{W^\ell_m}{}{\hat W^\ell_m}, $(c)\Rightarrow(b)$ holds.
Thus, we have shown $(a)\Leftrightarrow(b)\Leftrightarrow(c)$.

The restriction \seqm{W^\ell_k}{}{W^\ell_I} of the projection map 
\seqm{X^\ell}{}{X^\ell_I} to $W^\ell_k$ has a right inverse \seqm{W^\ell_I}{}{W^\ell_k} whenever $|I|\leq k$,
given by restricting the inclusion \seqm{X^\ell_I}{}{X^\ell} to $W^\ell_I$.  Thus $(a)\Rightarrow(d)$.

Suppose $(d)$ holds. By $(a)\Rightarrow (c)$  with $W^\ell_I$ in place of $W^\ell_k$, 
the quotient map \seqm{W^\ell_I}{}{\hat W^\ell_I} has a right homotopy inverse. Thus $(d)\Rightarrow(e)$.

Suppose $(e)$ holds. For any $|I|=m$ we have a commutative square 
\[\diagram
W^\ell_I\rto^{}\dto^{} & \hat W^\ell_I\dto^{} \\
W^\ell_m \rto^{} & \hat W^\ell_m \rto^-{\cong} & \cvee{J\subseteq[n],|J|=m}{}{\hat W^\ell_J}
\enddiagram\]
with the vertical maps the inclusions, the right-hand one being the inclusion into the summand $\hat W^\ell_I$. 
Therefore the wedge sum of maps \seqmm{\hat W^\ell_I}{s_I}{W^\ell_I}{}{W^\ell_m} over all $|I|=m$,
where $s_I$ is our given right homotopy inverse, is a right homotopy inverse of \seqm{W^\ell_m}{}{\hat W^\ell_m}. 
Thus $(e)\Rightarrow (c)$.

\end{proof}

Proposition~\ref{PSplitting} has an interesting interpretation for polyhedral products
$(\ul X,\ul A)^K$.

\begin{corollary}
\label{CSplitting}
If $(\ul X,\ul A)^K$ is a co-H-space, then 
$(\ul X ,\ul A)^K \simeq \cvee{I\subseteq [n]}{}{\widehat{(\ul X,\ul A)}^{K_I}}$.
\end{corollary}

Note that by~\cite{MR2673742}, when each $X_i$ is contractible, 
then $(\ul X ,\ul A)^K \simeq \cvee{I\not\in K}{}{|K_I|\ast \hat{A}^I}$,
and when each $A_i$ is contractible, 
$\widehat{(\ul X,\ul A)}^{K_I}\simeq\cwedge{i \in I}{}{X_i}$ when $K_I$ is a simplex, 
otherwise homotopy equivalent to a point.

\section{Configuration Spaces}

Let $M$ be any path connected space, $N\subseteq M$ a subspace, and $Y$ a basepointed space with basepoint~$\ast$. 
Let $D_0(M,N;Y)=\ast$ and take the quotient space
$$
D_k(M,N;Y) = \cdunion{i=0}{k}{M^{\times i}\times Y^{\times i}}/\sim
$$
where the equivalence relation $\sim$ is given by
$$
(z_1,\ldots,z_i;x_1,\ldots,x_i)\sim (z_{\sigma(1)},\ldots,z_{\sigma(i)};x_{\sigma(1)},\ldots,x_{\sigma(i)})
$$
for any permutation $\sigma$ in the symmetric group $\Sigma_i$, and 
$$
(z_1,\ldots,z_i;x_1,\ldots,x_i)\sim (z_1,\ldots,z_{i-1};x_1,\ldots,x_{i-1})
$$ 
whenever $x_i$ is the basepoint $\ast$ or $z_i\in N$. Then 
$$
D(M,N;Y) = \cunion{k=0}{\infty}{D_k(M,N;Y)}.
$$
Notice $D(M,N;Y)$ is just the infinite symmetric product $SP((M/N)\wedge Y)$.
We may think of $D(M,N;Y)$ as the space of (possibly colliding) particles in $M$ with labels in $Y$ that are annihilated in $N$. 
We thus refer to points in $D(M,N;Y)$ as \emph{configurations of particles} in $M$ with labels in $Y$, 
and by this we usually mean, without loss of generality, that the particles in such a configuration are \emph{non-degenerate}, 
or in other words, that they are outside $N$ and their labels are not the basepoint. 
The empty configuration is vacuously non-degenerate. 
It will sometimes be convenient to think of a configuration $(z_1,\ldots,z_i;x_1,\ldots,x_i)$ 
as a set $\{(z_1,x_1),\ldots (z_i,x_i)\}$ where each of the pairs $(z_i,x_i)$ are considered distinct
regardless of the values of $z_i$ or $x_i$.

 Fix $W$ to be a subspace of a product $X=X_1\times\cdots\times X_n$. 
\begin{definition}
A multiset of points $\mc S=\{\{x_1,\ldots,x_s\}\}\subset \coprod_i X_i$ is \emph{represented} by a point in $W$ 
if there exists a $(\bar x_1,\ldots,\bar x_n)\in W$ and some injective function
$f_{\mc S}\colon\seqm{[s]}{}{[n]}$ such that $x_i=\bar x_{f_{\mc S}(i)}$.

Equivalently, when $s>0$, 
$\mc S$ is represented by a point in $W$ if $(x_{\sigma(1)},\ldots,x_{\sigma(s)})$ is in $W_I$ 
for some distinct permutation $\sigma\in Aut([s])$ and $I\subseteq [n]$ with $|I|=s$.
\end{definition}

\begin{remark}
$\mc S$ is not represented by a point if it contains two or more elements in the same $X_i$ for some $i$. 
\end{remark}

\begin{definition}

Let $X^\vee=X_1\vee\cdots\vee X_n$ be the wedge at basepoints $\ast\in X_i$. 
Define $\mc C(M,N;W)$ to be the subspace of $D(M,N;X^\vee)$ given by the following rule:
a configuration of non-degenerate particles $(z_1,\ldots,z_k;x_1,\ldots,x_k)\in D(M,N;X^\vee)$  
is in $\mc C(M,N;W)$ if and only if 
\begin{itemize}
\item for each increasing integer sequence $1\leq i_1<\cdots<i_s\leq k$, if $z_{i_1}=\cdots=z_{i_s}$,
then the multiset of labels $\{\{x_{i_1},\ldots,x_{i_s}\}\}$ is represented by some point in $W$.
\end{itemize}
\end{definition}

The condition $z_{i_1}=\cdots=z_{i_s}$ is vacuously true when $s=1$, 
thus, the label $x$ of each particle is an element of $W_{\{i\}}\subseteq X_i$ for some $i$.
We filter $\mc C(M,N;W)$ by the subspaces
$$
\mc C_k(M,N;W)=\mc C(M,N;W)\cap D_k(M,N;X^\vee)
$$
having no more than $k$ non-degenerate particles.

These configuration spaces generalise McDuff's and B\"odgheimer's~\cite{MR0358766,MR922926},
originally studied by Boardman, Vogt, May, Giffen, and Segal.
Several other generalisations have appeared since then, some of these bearing certain similarities to these
(see~\cite{2009arXiv0901.2871D,MR1820902,MR1851264,MR2255967,MR2492776} for example).

\begin{example}
If $W=X_1\times\cdots\times X_n$, then
the space $\mc C(M,N;W)$ is homeomorphic to the product of classical labelled configuration spaces 
$C(M,N;X_1)\times\cdots\times C(M,N;X_n)$.
If $W=X_1\vee\cdots\vee X_n$, then $\mc C(M,N;W)=C(M,N;X_1\vee\cdots\vee X_n)$.
\end{example}

When referring to \emph{collided particles} in a labelled configuration of particles in $M$, 
we mean a subset of all particles in that configuration sharing the same point in $M$.
We can think of these configuration spaces as being like classical configuration spaces with labels in $X^\vee$,
but with certain collisions allowed governed by the way $W$ sits inside its ambient space $X$.
Here a single particle is considered to collide with itself, and at most $n$ particles can collide.

\subsection{Faithful Neighbourhoods}

Recall that a $CW$-complex $Z$ is said to be \emph{regular} if its attaching maps are embeddings.
A \emph{mapping cylinder neighbourhood} of a subspace $A\subseteq Z$ is a neighbourhood of $A$ in $Z$
homeomorphic to a mapping cylinder $M_f=A\sqcup([0,1]\times B)/\sim$, where $(0,b)\sim f(b)$, for some map $f\colon\seqm{B}{}{A}$.
An \emph{open mapping cylinder neighbourhood} is a subspace of $M_f$ of the form $A\sqcup ([0,1)\times B)/\sim$.
A \emph{sub-mapping cylinder} of $M_f$ is a subspace of $M_f$ of the form $A\sqcup ([0,1]\times B')/\sim$
for some subspace $B'\subseteq B$, where $f'$ restricts $f$ to $B'$.
We call the subspaces $(0,1]\times \{b\}\subseteq M_f$ for $b\in B$ the \emph{fibers} of $M_f$.

It is easy to see via a cell-by-cell induction that any subcomplex of a (geometric) simplicial complex has a mapping cylinder neighbourhood.
Moreover, each of its fibers lie entirely in the interior a single simplex. 
Since the cells of a regular $CW$-complex can be subdivided to give it the structure of a simplicial complex, 
subcomplexes of regular $CW$-complexes also have mapping cylinder neighbourhoods, with each of their fibers lying entirely in a single cell. 

The following is a coordinate-wise version of the neighbourhood deformation retraction property of a pair.

\begin{definition}
\label{DFaithful}
Any proper subcomplex $W$ has a \emph{faithful neighbourhood} if it has an open neighbourhood $U\subseteq X$ with 
associated basepoint preserving homotopies $h_{i,t}\colon\seqm{X_i}{}{X_i}$ for $i=1,\ldots,n$, such that: 
(1) $h_{i,0}$ is the identity and $h_{i,t}(\ast)=\ast$; 
(2) there is an open neighbourhood $U_i$ of the basepoint 
$\ast\in X_i$ such that $h_{i,1}(U_i)=\ast$, and $h_{i,t}(U_i)$ is an open neighbourhood of $\ast$ for each $t\in[0,1)$; and
(3) the homotopy $h_t = h_{1,t}\times\cdots\times h_{n,t}\colon\seqm{X}{}{X}$ satisfies 
$h_t(W)\subseteq W$ for each $t\in[0,1]$, $h_1(U)=W$, 
and $h_t(U)$ is an open neighbourhood of $W$ for each $t\in[0,1)$.
\end{definition}

Since any open subset of $X$ is a union of products of open subsets of each $X_i$, the projection \seqm{X}{}{X_I=\prod_{i\in I}X_i}
maps open subsets to open subsets for any $I\subseteq [n]$.
\begin{lemma}
\label{LProjection}
The image of our faithful neighbourhood $U$ of $W$ under the projection \seqm{X}{}{X_I}
is a faithful neighbourhood of $W_I$ in $X_I$, inheriting from $U$ the associated homotopies $h_{i,t}$ where $i\in I$.~$\qqed$
\end{lemma}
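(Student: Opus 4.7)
The plan is to verify each of the three conditions of Definition~\ref{DFaithful} for the pair $(p_I(U), \{h_{i,t}\}_{i\in I})$ in $X_I$, where $p_I\colon\seqm{X}{}{X_I}$ is the projection. The key observation, already highlighted in the statement, is that $p_I$ sends open sets of $X$ to open sets of $X_I$, which immediately gives that $p_I(U)$ is open in $X_I$; and since $W\subseteq U$, we have $W_I = p_I(W)\subseteq p_I(U)$, so $p_I(U)$ is an open neighbourhood of $W_I$.

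Next I would verify conditions (1) and (2), which are essentially automatic: the maps $h_{i,t}$ for $i\in I$ already satisfy $h_{i,0}=\ID$ and $h_{i,t}(\ast)=\ast$, and the neighbourhoods $U_i\subseteq X_i$ for $i\in I$ already satisfy $h_{i,1}(U_i)=\ast$ with $h_{i,t}(U_i)$ an open neighbourhood of $\ast$ for $t\in[0,1)$. So these data transfer verbatim from $U$ to $p_I(U)$.

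The content is in condition (3), and here the main (only) observation I would use is that $h_t=h_{1,t}\times\cdots\times h_{n,t}$ acts coordinate-wise, so if $h_t^I=\prod_{i\in I}h_{i,t}\colon\seqm{X_I}{}{X_I}$, then $p_I\circ h_t=h_t^I\circ p_I$. From this commutativity I would deduce:
\begin{itemize}
\item $h_t^I(W_I)=h_t^I(p_I(W))=p_I(h_t(W))\subseteq p_I(W)=W_I$,
\item $h_1^I(p_I(U))=p_I(h_1(U))=p_I(W)=W_I$,
\item for $t\in[0,1)$, $h_t^I(p_I(U))=p_I(h_t(U))$ is open in $X_I$ (since $h_t(U)$ is open in $X$ and projections are open), and contains $W_I=p_I(W)$ because $W\subseteq h_t(U)$.
\end{itemize}

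There is no real obstacle in this proof; the whole content is the openness of the projection map together with the coordinate-wise nature of the homotopy $h_t$, which makes every condition of Definition~\ref{DFaithful} pass through $p_I$ by direct bookkeeping. The one point worth stating cleanly, so that nothing is glossed over, is the commutativity $p_I\circ h_t=h_t^I\circ p_I$, which is where the product structure of $X$ and of the homotopy are both essential.
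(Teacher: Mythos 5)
Your proof is correct and follows exactly the route the paper intends: the paper's entire argument is the observation that the projection \seqm{X}{}{X_I} is an open map, with the remaining verification (which you carry out via the identity $p_I\circ h_t=h^I_t\circ p_I$ coming from the coordinate-wise definition of $h_t$) left as a routine check. Nothing is missing.
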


\begin{example}
\label{EFaithful2}
Given a simplicial complex $K$ on vertex set $[n]$, 
the real moment-angle complex $W=(D^1,S^0)^K$ with basepoint 0 has as a faithful neighbourhood the polyhedral product
$U=(D^1,[0,\frac{1}{4})\cup(\frac{3}{4},1])^K$ with each $h_{i,t}\colon\seqm{D^1}{}{D^1}$ 
defined explicitly by
$h_{i,t}(s)=s(1-t)$ for $s\in[0,\frac{1}{4}]$, 
$h_{i,t}(s)=s+t(s-\frac{1}{2})$ for $s\in(\frac{1}{4},\frac{3}{4})$, and
$h_{i,t}(s)=s+t(1-s)$ for $s\in[\frac{3}{4},1]$. 
Note $U$ and the $h_{i,t}$'s also form a faithful neighbourhood for each subcomplex $W_k$.
\end{example}

\begin{example}
\label{EFaithful}
More generally, let $Y$ be a regular $CW$-complex and $A$ a subcomplex. 
The polyhedral product $W=(Y,A)^K$ has as a faithful neighbourhood the polyhedral product $U=(Y,M_f)^K$, 
where $M_f$ is an open mapping cylinder neighbourhood $A\cup_f ([0,\frac{1}{2})\times B)$ of $A$ for some map $f\colon\seqm{B}{}{A}$.
Each homotopy $h_{i,t}\colon\seqm{Y}{}{Y}$ is given as follows.
Define $g_{i,t}\colon\seqm{Y}{}{Y}$ by extending to $Y$ the deformation retraction of $M_f$ onto $A$ to give our homotopy $g_{i,t}$.
Then define $g'_{i,t}\colon\seqm{Y}{}{Y}$ by taking an open mapping cylinder neighbourhood $M'_\ast$ of the basepoint $\ast$ in $A$,
extending it to an open mapping cylinder $U_i=M_\ast$ of $\ast$ in $Y$,
and extending the deformation retraction of $M_\ast$ onto $\ast$ to give our homotopy $g'_{i,t}$.
Note since $M'_\ast$ is a sub-mapping cylinder of $M_\ast$, the deformation retraction of $M_\ast$ onto $\ast$ restricts to a 
deformation retraction in $A$ of $M'_\ast$ onto $\ast$, so we can take $g'_{i,t}$ so that it restricts to $g'_{i,t}\colon\seqm{A}{}{A}$ on $A$,
which we do. Then let $h_{i,t}=g'_{i,t}\circ g_{i,t}$. 
%For $t\in[\frac{1}{2},1]$, we define $h_{i,t}$ by taking an open mapping cylinder neighbourhood $M'_\ast$ of the basepoint $\ast$ in $A$,
%extending the deformation retraction of $M'_\ast$ onto $\ast$ to define a homotopy $h'_{i,t}\colon\seqm{A}{}{A}$ for $t\in[\frac{1}{2},1]$,
%finally, extending $h'_{i,t}$ to $h_{i,t}$ using the deformation retraction of $M_f$ onto $A$.

If $A$ is a disjoint union $A'\coprod\{\ast\}$, so that $M_f$ is a disjoint union $M_{f'}\coprod M_\ast$, 
then each $W_k\subseteq Z_K(Y,A)$ has the subspace 
$$
U_k=\cset{(x_1,\ldots,x_n)\in Z_K(Y,M_f)}{x_i\in M_\ast\mbox{ for at least }n-k\mbox{ coordinates }x_i }
$$ 
as a faithful neighbourhood, with the same associated homotopies $h_{i,t}$.
\end{example}

\subsection{The Case of a Disk and its Boundary}

From now on we assume that each $X_i$ is a connected basepointed regular $CW$-complex, 
$W$ is a connected subcomplex of their product $X$ containing the basepoint, and $W$ has a faithful neighbourhood in $X$. 

Consider the inclusion 
$$
\iota\wcolon\seqm{W^\ell}{}{\mc C(D^\ell,\bd D^\ell;W)}
$$ 
given by sending 
$((z_1,x_1),\ldots,(z_n,x_n))$ to the configuration $(z_1,\ldots,z_n;x_1,\ldots,x_n)$.
Notice $\iota$ is the restriction of the likewise defined map \seqm{X^\ell}{\iota}{\mc C(D^\ell,\bd D^\ell;X)}.
This map in turn is just the product of inclusions \seqm{\Sigma^\ell X_i}{}{C(D^\ell,\bd D^\ell;X_i)}
into the classical configuration spaces, each of which is a homeomorphism of $\Sigma^\ell X_i$ onto its image~\cite{MR0358766,MR922926}. 
Therefore $\iota$ is homeomorphism of $W^\ell$ onto its image, 
and we think of $W^\ell$ as a subspace of $\mc C(D^\ell,\bd D^\ell;W)$ under this inclusion.
Notice that a configuration of non-degenerate particles is in the image of $\iota$ if and only if 
the multiset of labels of all its non-degenerate particles is represented by a point in $W$.

\begin{lemma}
\label{LBaseCase}
There is a deformation retraction of $\mc C(D^\ell,\bd D^\ell;W)$ onto the subspace $W^\ell$ given by the inclusion $\iota$. 
\end{lemma}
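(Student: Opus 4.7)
My plan is to construct a deformation retraction $H_\tau\colon \mc C(D^\ell,\bd D^\ell;W)\to \mc C(D^\ell,\bd D^\ell;W)$, $\tau\in[0,1]$, satisfying $H_0=\mathrm{id}$, $\mathrm{Im}(H_1)\subseteq \iota(W^\ell)$, and $H_\tau\circ \iota=\iota$ for all $\tau$. The central tool is the faithful neighbourhood $U\subseteq X$ of $W$ together with its coordinate-wise basepoint-preserving homotopies $h_{i,\tau}\colon X_i\to X_i$, whose product $h_\tau=h_{1,\tau}\times\cdots\times h_{n,\tau}$ satisfies $h_\tau(W)\subseteq W$, $h_1(U)=W$, and $h_{i,1}(U_i)=\ast$.

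The label component of $H_\tau$ is the natural one: for a configuration $(z_1,\ldots,z_k;x_1,\ldots,x_k)$, each label $x_j$ lies in some factor $X_{i_j}$, and I would deform it through $h_{i_j,\tau}(x_j)$. This produces a continuous family of configurations with the same underlying positions but time-varying labels. The representability condition at collisions is preserved throughout: if the multiset $\{\{x_{j_1},\ldots,x_{j_s}\}\}$ at a collision site is represented by a point $\bar x\in W$ via an injection $f_{\mc S}$, then the deformed multiset is represented by $h_\tau(\bar x)\in W$ via the \emph{same} $f_{\mc S}$, since coordinate-wise application of $h_\tau$ respects the injection. At $\tau=1$ all labels that originally lay in $U_i$ collapse to the basepoint, annihilating the corresponding particles.

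To land in $\iota(W^\ell)$, I would combine this label deformation with a position-side deformation of the underlying configuration in $D^\ell$ designed so that by time $1$ each factor $X_i$ contributes at most one surviving particle, and the resulting tuple of labels then automatically lies in $W$ by the property $h_1(U)=W$. The faithful neighbourhood structure on $X^\ell$ inherited from $U$ via Lemma~\ref{LProjection} identifies the target image with $\iota(W^\ell)$. The retraction property $H_\tau\circ\iota=\iota$ follows because points of $\iota(W^\ell)$ already have labels from distinct factors whose tuple lies in $W$, so $h_\tau$ keeps them in $W$ and no position adjustment is forced on them.

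The main obstacle is the position component: there is no canonical continuous rule for consolidating several particles whose labels lie in the same factor $X_i$ into a single particle, since a collision of two such labels would violate representability and hence is forbidden inside $\mc C$. I expect one must coordinate the deformation so that among particles with label in $X_i$, all but a distinguished one have their labels driven into $U_i$ before any attempted collision, so that the annihilation provided by $h_{i,1}(U_i)=\ast$ does the work rather than any genuine merger. Arranging this continuously in families, in such a way that the position motion, the annihilation, and the label homotopy fit together without ever leaving $\mc C$, is the crux of the argument and will likely require using the filtration by $\mc C_k$ and a cell-wise or inductive construction based on the regular $CW$ structure of each $X_i$ together with the faithful neighbourhood data.
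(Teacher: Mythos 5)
There is a genuine gap, and you have in fact pointed at it yourself: your proposal has no working mechanism for getting rid of the surplus particles, and the mechanism you sketch would fail. The label homotopies $h_{i,\tau}$ only annihilate labels that already lie in the basepoint neighbourhoods $U_i$; a configuration with two particles at distinct interior points of $D^\ell$ whose labels sit in the same factor $X_i$ far from the basepoint is untouched by them, and these particles can never be merged since a collision of two labels in the same $X_i$ is not represented by any point of $W$. Your suggested fix (continuously choose a distinguished particle per factor and drive the other labels into $U_i$ first) cannot be carried out: there is no continuous choice of distinguished particle as configurations vary (particles are unordered and can exchange roles along paths), and $h_{i,\tau}$ gives you no way to move a label that is not already in $U_i$ into it. The missing idea in the paper's proof is that the excess particles are eliminated through the \emph{position} coordinate, not the label coordinate: one radially expands the configuration in $D^\ell$, so every particle not sitting at the centre is eventually pushed into $\bd D^\ell$ and annihilated there, while the particles that survive (those collided at the centre) automatically have labels in distinct factors and represented by a point of $W$, because that is forced by the definition of $\mc C(D^\ell,\bd D^\ell;W)$. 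No consolidation of same-factor particles is ever needed.

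A second, related flaw is your claim that "the resulting tuple of labels then automatically lies in $W$ by the property $h_1(U)=W$": this presupposes that the tuple of surviving labels lies in the faithful neighbourhood $U$, which is not automatic and is not what the paper uses. In the paper the membership in $W^\ell$ comes for free from representability of the labels of the centrally collided particles; the basepoint-directed label homotopies along mapping cylinder fibers (first step) and the faithful-neighbourhood homotopy $h_{f(t)}$ (second step) are there only to make the stopping time of the radial expansion a continuous function of the configuration, i.e.\ to handle configurations whose labels are close to the basepoints or whose label point is close to $W$. Your observation that applying $h_\tau$ coordinate-wise preserves representability at collisions is correct and is indeed used in the paper's second step, but without the radial expansion and the boundary annihilation the deformation you describe does not reach $\iota(W^\ell)$.
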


\begin{proof}
We use a two-step radial expansion argument, generalizing the deformation retraction for the 
classical labelled configuration space $C(D^\ell,\bd D^\ell;Y)$ onto $\Sigma^\ell Y$.
We think of $D^\ell$ as the closed unit-disk centered at the origin in $\mb R^\ell$, 
and the radial expansion $R_t\colon\seqm{D^\ell}{}{D^\ell}$ here is given by
$R_t(x)=tx$ if $\abs{tx}<1$ and $R_t(x)=\frac{tx}{\abs{tx}}$ if $\abs{tx}\geq 1$.

Let $\mc C$ denote $\mc C(D^\ell,\bd D^\ell;W)$, and $\mc C'$ be the subspace of
configurations in $\mc C$ whose non-degenerate particles have labels in distinct $X_i$'s 
(thus, there can be no more than $n$ of such particles). 
To define a deformation retraction of $\mc C$ onto $\mc C'$ we define a homotopy $H_t\colon\seqm{\mc C}{}{\mc C}$, 
for $t\in[0,\infty)$, by applying the radial expansion to non-degenerate particles in a given configuration $\omega\in \mc C$, 
while at the same time we homotope labels 'near' the basepoint towards the basepoint.  
We do this by picking a mapping cylinder neighbourhood $M_i$ of the basepoint in each $X_i$ 
(these exist since $X_i$ is a regular $CW$-complex), 
keeping a label fixed if it is the basepoint or outside the interior of some $M_i$, or else homotoping it towards the basepoint 
along the corresponding fibre $(0,1]\times\{b\}\subseteq M_i$ of the mapping cylinder, 
this being done at a speed that tends to zero the closer the label is to the boundary of $M_i$.
We may assume each of the fibers $(0,1]\times\{b\}$ of $M_i$ lie in a single cell in $X_i$. 
Then since $W$ as a subcomplex of $X$ is a union of products of cells in the $X_i$'s, 
under this homotopy of labels, collided particles still have their set of labels represented by a point in $W$. This defines $H$.

Notice only particles that are collided at the centre of $D^\ell$ are unaffected by the radial expansion, 
but then their multiset of labels is represented by a point in $W$, so in particular their labels must already be in distinct $X_i$'s. 
On the other hand, particles away from the centre may eventually enter the boundary $\bd D^\ell$ under the radial expansion, 
or have their labels homotoped to the basepoint, in either case becoming degenerate, 
until we are left with at most $n$ non-degenerate particles with labels in distinct $X_i$'s. 
Thus, 
there is a smallest sufficiently large time $t_\omega$ depending on $\omega$ such that $H_{t_\omega}(\omega)$ is a configuration in $\mc C'$, 
at which point we stop the homotopy $H$. To see that the map \seqm{\mc C}{}{[0,\infty)}, $\omega\mapsto t_\omega$, is continuous, 
notice our homotopy of labels near the basepoint towards it ensures $t_\omega$ varies continuously as we vary the labels by making them 
approach a basepoint. Then given an open neighbourhood $V\subseteq [0,\infty)$ of $t_\omega$, 
for sufficiently small open neighbourhoods $V'\subseteq\mc C$ of $\omega$, the only possible configurations $\aleph$ that could be in $V'$,
and for which $t_\aleph$ could not be in $V$, would have to have at least two particles collided at the origin with labels in the same $X_i$.
But such configurations cannot exist in $\mc C$, since they would not be represented by a point in $W$. 
This defines our first deformation retraction.

Finally, to define a deformation retraction of $\mc C'$ onto $W^\ell$, notice the set of labels of non-degenerate particles 
in a configuration in $\omega\in\mc C'$ corresponds to a distinct point $x$ in $X$ whose coordinates are each either a basepoint or 
one of the labels. So we can think of the set of labels of $\omega$ as a point $x\in X$. 
We define a homotopy $G_t\colon\seqm{\mc C'}{}{\mc C'}$, $t\in[0,\infty)$, 
by again apply the radial expansion $R_t$ to $\omega$ starting at $t=0$, 
meanwhile, homotoping the set of labels towards $W$ when it is 'near' $W$. We must be careful how we perform this homotopy of labels, 
since we must ensure labels of particles that are collided at a point remain represented by a point in $W_n$, 
and that the homotopy does not become discontinuous as a particle enters the boundary and becomes degenerate 
(in which case the coordinate in $x$ corresponding to the label of this particle becomes the basepoint).  
We therefore use a faithful neighbourhood $U$ of $W$ with associated homotopy $h_t\colon\seqm{X}{}{X}$, 
and homotop labels near $W$ towards $W$ by mapping $x$ to $h_{f(t)}(x)$ at each time $t$, where $f(t)=\frac{t}{1+t}$.
This takes care of both of these issues since $h_t$ maps $W$ to $W$ and is defined coordinate-wise as a product of basepoint preserving maps 
$h_{i,t}\colon\seqm{X_i}{}{X_i}$. 
In particular, for each $I\subseteq [n]$, the product $h_{I,t}\colon\seqm{X_I}{}{X_I}$ of the $h_{i,t}$'s over $i\in I$ maps $W_I$ to $W_I$.
So if a subset of $|I|$ non-degenerate particles in $\omega$, each with a label in $X_i$ for each $i\in I$, are represented by a point in $W$,
that is, their set of labels are the coordinates of a distinct point in $W_I$, they remain so under the homotopy $h_{f(t)}$. 
This defines $G_t$.

As those particles unaffected by radial expansion are collided at the origin and already represented by a point in $W$, 
we see that there is a smallest sufficiently large time $t'_{\omega}$ depending on $\omega$
when enough particles are annihilated at the boundary, or have the set of labels homotoped to a point in $W$,
so that the labels of the non-degenerate particles are represented by a point in $W$.
That is, $G_{t'_{\omega}}(\omega)$ becomes a point in the image of $\iota$, at which point we stop the homotopy $G$. 
As before, \seqm{\mc C'}{}{[0,\infty)} mapping $\omega\mapsto t'_{\omega}$ is continuous.  
Here, since $\omega$ is in $\mc C'$, the labels of any subset of particles in $\omega$ are the coordinates of a point 
$p\in X_I$ for some $I\subseteq[n]$, and by Lemma~\ref{LProjection}, the homotopy $h_t$, 
restricting to $h_{I,t}$ on the labels $p$ makes $p$ approach $W_I$ near $W_I$. 
Moreover, since each $h_{i,t}$ by definition homotopes points close to the basepoint in $X_i$ towards it,
any label near a basepoint is homotoped towards it by $h_{f(t)}$.
So $t'_{\omega}$ varies continuously as we homotope a subset of labels of $\omega$ towards a point in $W_I$, 
or each towards a basepoint. 
Then given an open neighbourhood $V\subseteq [0,\infty)$, for sufficiently small open neighbourhoods $V'\subseteq\mc C'$ of $\omega$, 
the only possible configurations $\aleph$ that could be in $V'$, and for which $t'_\aleph$ could not be in $V$, 
would have to have particles collided at the origin that are not represented by a point in $W$,
but these do not exist in $\mc C'$. This defines our second deformation retraction.
\end{proof}

\subsection{The Scanning Map}

Let $\mc C(W)$ denote $\mc C(\mb R,\emptyset;W)$, and $\mc C_i(W)$ the subspace $\mc C_i(\mb R,\emptyset;W)$.  
We define a \emph{scanning map}
$$
\gamma\wcolon\seqmm{\mc C(W)}{\simeq}{\mc C(D^1;W)}{\gamma}{\map(D^1,\bd D^1;W^1)\cong\Omega W^1}
$$
analogously to the one for classical labelled configuration spaces~\cite{MR0331377} as follows.
The first homotopy equivalence in this composite is induced by the map \seqm{\mb R}{}{D^1} sending $t\mapsto \frac{t}{1+\abs{t}}$
(with a choice of homotopy inverse induced by the inclusion \seqm{D^1}{}{\mb R}). 
Let $D_{\varepsilon,z}$ denote the closed interval $[z-\varepsilon,z+\varepsilon]\subseteq\mb R$. 
For any configuration $\omega\in\mc C(W)$, $\gamma(\omega)$ is the map \seqm{D^1}{}{W^1} defined for any $z\in D^1$  
by taking $\gamma(\omega)(z)$ to be the image of $\omega$ under the composite
\begin{equation}
\label{ESection}
\seqmmm{\mc C(W)}{}{\mc C(D_{\varepsilon,z},\bd D_{\varepsilon,z};W)}{\cong}{\mc C(D^1,\bd D^1;W)}{r}{W^1}
\end{equation}
where the first map forgets those particles from $\omega$ that are in $\mb R-D_{\varepsilon,z}$,
the second homeomorphism induced by the homeomorphism $D^1\cong D_{\varepsilon,z}$, and $r$ is the retraction from Lemma~\ref{LBaseCase}.

With regards to labels, $\mc C()$ is a functor from the category of subspaces of 
length $n$ products of basepointed $CW$-complexes, and coordinate-wise maps between them. 
More precisely, given $Y=Y_1\times\cdots\times Y_n$ a product of basepointed $CW$-complexes, 
basepoint preserving maps $f_i\colon\seqm{X_i}{}{Y_i}$, and a subspace $V\subseteq Y$, 
suppose the coordinate-wise map $\bar f=(f_1\times\cdots\times f_n)\colon\seqm{X}{}{Y}$  
restricts to a map $f\colon\seqm{W}{}{V}$. This induces a map
$$
\mc C(f)\wcolon\seqm{\mc C(W)}{}{\mc C(V)}
$$
sending $(z_1,\ldots,z_n;x_1,\ldots,x_n)\mapsto (z_1,\ldots,z_n; f_1(x_1),\ldots,f_n(x_n))$,
and a map 
$$
f^1\wcolon\seqm{W^1}{}{V^1}
$$
sending $((z_1,x_1),\ldots,(z_n,x_n))\mapsto ((z_1,f_1(x_1)),\ldots,(z_n,f_n(x_n)))$.
The scanning map $\gamma$ is then a natural transformation in the homotopy category:

\begin{lemma}
\label{LScanningNatural}
There is a homotopy commutative square
\[\diagram
\mc C(W)\rto^{\mc C(f)}\dto^{\gamma} & \mc C(V)\dto^{\gamma}\\
\Omega W^1 \rto^{\Omega f^1} & \Omega V^1. 
\enddiagram\]
\end{lemma}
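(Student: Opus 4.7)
The plan is to construct an explicit homotopy between the two composites by exploiting the deformation retraction of Lemma~\ref{LBaseCase}. The key observation is that almost every ingredient of the scanning map commutes \emph{strictly} with $\mc C(f)$ and $f^1$; the only nontrivial piece is the retraction $r$ itself, and its failure to commute strictly is corrected by the obvious homotopy.

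To see this, unfold the composite~\eqref{ESection} defining $\gamma(\omega)(z)$: the equivalence $\mc C(W) \simeq \mc C(D^1;W)$ induced by $t \mapsto t/(1+|t|)$, the forgetting of particles outside the scanning window $D_{\varepsilon,z}$, the reparametrization homeomorphism $\mc C(D_{\varepsilon,z}, \bd D_{\varepsilon,z}; W) \cong \mc C(D^1, \bd D^1; W)$, and the retraction $r$. Each of the first three steps affects only particle positions, not labels, and so commutes strictly with $\mc C(f)$ on both source and target. It therefore suffices to exhibit a basepoint-preserving homotopy $r_V \circ \mc C(f) \simeq f^1 \circ r_W$ as maps $\mc C(D^1, \bd D^1; W) \to V^1$. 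Here one uses the strict identity
\[
\mc C(f) \circ \iota_W \;=\; \iota_V \circ f^1,
\]
where $\iota_W\colon W^1 \hookrightarrow \mc C(D^1, \bd D^1; W)$ is the inclusion of Lemma~\ref{LBaseCase} sending $((z_1,x_1),\ldots,(z_n,x_n))$ to $(z_1,\ldots,z_n; x_1,\ldots,x_n)$ (and similarly $\iota_V$), which is immediate from the definitions. Combined with $r \circ \iota = \text{id}$ on both sides, this yields $r_V \circ \mc C(f) \circ \iota_W \circ r_W = r_V \circ \iota_V \circ f^1 \circ r_W = f^1 \circ r_W$.

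Let $H_s$, $s \in [0,1]$, be the deformation retraction homotopy produced in the proof of Lemma~\ref{LBaseCase}, with $H_0 = \text{id}$ and $H_1 = \iota_W \circ r_W$. Then $G_s := r_V \circ \mc C(f) \circ H_s$ interpolates between $r_V \circ \mc C(f)$ at $s = 0$ and $f^1 \circ r_W$ at $s = 1$, and preserves the basepoint throughout because each map in the composition does. Finally, lift this to a homotopy of scanning maps by applying it uniformly in the scanning parameter: define $\Gamma_s(\omega)(z) := G_s(\omega|_{D_{\varepsilon,z}})$, with restriction and reparametrization as in~\eqref{ESection}. The strict commutations from the previous paragraph give $\Gamma_0 = \gamma \circ \mc C(f)$ and $\Gamma_1 = \Omega f^1 \circ \gamma$, which is the required homotopy. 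I do not foresee a real obstacle: the statement is essentially formal naturality once one spots the strict identity $\mc C(f) \circ \iota_W = \iota_V \circ f^1$, and the only point needing any care is that $H_s$ depends continuously on the configuration, which is part of the content of Lemma~\ref{LBaseCase} and may be invoked directly.
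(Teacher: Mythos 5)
Your proposal is correct and follows essentially the same route as the paper: both rest on the strict identity $\mc C(f)\circ\iota_W=\iota_V\circ f^1$ and then replace the inclusion $\iota$ of Lemma~\ref{LBaseCase} by its homotopy inverse $r$ to get $r_V\circ\mc C(f)\simeq f^1\circ r_W$, applied uniformly in the scanning parameter $z$. Your only addition is making the homotopy explicit via $G_s=r_V\circ\mc C(f)\circ H_s$ using the deformation retraction, which is exactly the content of the paper's "$z$-independent homotopy" step.
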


\begin{proof}

Since the following square commutes
\[\diagram
W^1\rto^{f^1}\dto^{\iota} & V^1\dto^{\iota}\\
\mc C(D^1,\bd D^1;W)\rto^{\mc C(f)} & \mc C(D^1,\bd D^1;V),
\enddiagram\]
we may replace the homotopy equivalences $\iota$ with their homotopy inverse $r$ to obtain a homotopy commutative square.
Thus we have a diagram
\[\diagram
\mc C(W)\rto^{}\dto^{\mc C(f)} & 
\mc C(D_{\varepsilon,z},\bd D_{\varepsilon,z};W)\rto^-{\cong} &
\mc C(D^1,\bd D^1;W)\rto^-{r}\dto^{\mc C(f)} &
W^1\dto^{f^1}\\
\mc C(V)\rto^{} & 
\mc C(D_{\varepsilon,z},\bd D_{\varepsilon,z};V)\rto^-{\cong} & 
\mc C(D^1,\bd D^1;V)\rto^-{r} &
V^1,
\enddiagram\]
the left square commuting strictly,
and the homotopy $f^1\circ r\simeq r\circ\mc C(f)$ in the right square being independent of $z\in \mb R$.
This then gives a homotopy $\gamma(\mc C(f)(\omega))\simeq \Omega f^1\circ \gamma(\omega)$ of maps \seqm{D^1}{}{W^1} independently of $z$ and $\omega$,
so in turn, it defines a homotopy $\gamma\circ\mc C(f)\simeq \Omega f^1\circ\gamma$.
\end{proof}

\subsection{Suspension Splittings}
\label{SSplitting}

Our goal is to obtain a (single) suspension splitting of $\mc C(W)$. 
Take the quotient space
$$
\mc D_m(W)=\frac{\mc C_m(W)}{\mc C_{m-1}(W)}
$$

\begin{lemma}
\label{LNDR}
The pair $(\mc C_m(W),\mc C_{m-1}(W))$ is an $NDR$-pair. Thus, the sequence 
$$
\seqmm{\mc C_{m-1}(W)}{}{\mc C_m(W)}{}{\mc D_m(W)} 
$$
is a cofibration sequence.   
\end{lemma}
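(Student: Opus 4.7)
My plan is to produce Str\o m NDR-data $(u, H)$ for the pair $(\mc C_m(W), \mc C_{m-1}(W))$; the cofibration sequence then follows from the standard characterization. The essential input is the faithful neighbourhood structure of Definition~\ref{DFaithful}: open basepoint neighbourhoods $U_i \subseteq X_i$ and coordinate-wise deformations $h_{i,t} \colon X_i \to X_i$ whose product preserves $W$. By normality of the $X_i$, one first picks continuous functions $\phi_i \colon X_i \to [0,1]$ with $\phi_i^{-1}(0) = \{\ast\}$ and $\phi_i \equiv 1$ outside $U_i$; after a harmless adjustment of the faithful neighbourhood data one may further assume $h_{i,1}(x) = \ast$ whenever $\phi_i(x) < 1$.

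The function $u$ is then defined by $u(\omega) = 0$ on $\mc C_{m-1}(W)$ and
\[
u(\omega) \,=\, \min_{1 \le j \le m} \phi_{i_j}(x_j)
\]
on configurations $\omega = (z_1,\ldots,z_m; x_1,\ldots,x_m)$ with exactly $m$ non-degenerate particles and $x_j \in X_{i_j}$. Immediately $u^{-1}(0) = \mc C_{m-1}(W)$. Continuity reduces to a single observation: since $N = \emptyset$, particles can become degenerate in the limit only by having their labels tend to the basepoint, which forces the corresponding $\phi$-values to tend to $0$.

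For the deformation $H$, I apply the coordinate-wise homotopy $h_{i,t}$ to each label, modulated by continuous weights $\lambda_j(\omega) \in [0,1]$:
\[
H_t(\omega) \,=\, \bigl(z_1,\ldots,z_k;\, h_{i_1,\, \lambda_1(\omega) t}(x_1),\ldots, h_{i_k,\, \lambda_k(\omega) t}(x_k)\bigr).
\]
The weights are arranged so that (a) $\lambda_j(\omega) = 0$ for every $\omega \in \mc C_{m-1}(W)$ and every $j$, and (b) for every $\omega \notin \mc C_{m-1}(W)$, some index $j$ with $\phi_{i_j}(x_j) = u(\omega)$ satisfies $\lambda_j(\omega) = 1$. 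That $H_t$ actually lands in $\mc C_m(W)$ uses the property $h_t(W) \subseteq W$ together with Lemma~\ref{LProjection}: for a cluster of collided particles whose labels are the non-trivial coordinates of some $p \in W_I$, the image under the corresponding product of $h_{i,t}$'s remains in $W_I$. The NDR axioms are now immediate: $H_0 = \mathrm{id}$; $H_t$ is stationary on $\mc C_{m-1}(W)$ by (a); and when $u(\omega) < 1$, the distinguished index $j$ from (b) has $\phi_{i_j}(x_j) < 1$, so $h_{i_j, 1}(x_j) = \ast$ annihilates the $j$-th particle and forces $H_1(\omega) \in \mc C_{m-1}(W)$.

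The principal technical obstacle is the continuous construction of the weights $\lambda_j$ in the previous step. The naive prescription---weight $1$ on the unique label achieving the minimum $\phi$-value, weight $0$ on the others---is discontinuous at configurations with ties among these values. The fix is to smear the activation over all labels attaining (or almost attaining) the minimum via a partition-of-unity-style construction on the simplex of $m$-tuples of $\phi$-values, producing continuous weights that vanish on $\mc C_{m-1}(W)$ and whose pointwise maximum reaches $1$ precisely on its complement, as required.
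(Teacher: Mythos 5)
There is a genuine gap, and it sits exactly at the step you dispose of in one sentence: that $H_t$ lands in $\mc C_m(W)$. The faithful neighbourhood axiom only gives $h_t(W)\subseteq W$ for the \emph{product} homotopy $h_t=h_{1,t}\times\cdots\times h_{n,t}$ evaluated at a single common time $t$, and Lemma~\ref{LProjection} likewise only gives $h_{I,t}(W_I)\subseteq W_I$ for a common $t$. Your homotopy applies $h_{i_j,\lambda_j(\omega)t}$ with \emph{different} times to different labels, and your weights $\lambda_j$ are built from the $\phi$-values alone, so two particles collided at the same point of $\mb R$ can easily receive very different times (one label deep in $U_{i_j}$, the other outside it). For such a cluster, whose labels form the $I$-coordinates of a point of $W_I$, nothing in Definition~\ref{DFaithful} prevents the unequally-homotoped tuple from leaving $W_I$; the homotopies $h_{i,t}$ of Example~\ref{EFaithful}, for instance, drag points across cells, and only the simultaneous application to all coordinates is controlled. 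So the defining condition of $\mc C(M,N;W)$ — every multiset of labels of collided particles is represented by a point in $W$ — can fail along your deformation. The obvious repairs do not come cheaply: making the weights constant on each cluster collides with continuity (clusters merge and split, and the weights would then have to depend on the particle positions in a carefully matched way), while using a single global time $\mu(\omega)$ forces $\mu=0$ on $\mc C_{m-1}(W)$ and then $H_1$ no longer pushes nearby configurations with $u<1$ into $\mc C_{m-1}(W)$, breaking the NDR condition. (Your secondary worry, continuity of the weights themselves, is indeed the lesser problem and is plausibly fixable as you sketch; also note that your ``harmless adjustment'' is automatic, since $\phi_i(x)<1$ forces $x\in U_i$ and axiom (2) already gives $h_{i,1}(U_i)=\ast$.)

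The paper's proof avoids this trap by not using the faithful neighbourhood at all in this lemma. It chooses, in each $X_i$, a mapping cylinder neighbourhood $M_i$ of the basepoint whose fibers lie in single cells (possible because $X_i$ is a regular $CW$-complex), defines $u$ from the fiber parameter, and homotopes each label toward the basepoint along its own fiber at its own speed. Because each label then stays inside the closed cell containing it, and $W$ is a subcomplex of $X$, hence a union of products of cells, the representability of the labels of any collided cluster is preserved \emph{regardless} of the relative speeds of the labels — which is precisely the robustness your per-particle modulation needs and the faithful-neighbourhood data does not supply. If you want to salvage your write-up, replace the $h_{i,t}$'s by such fiberwise basepoint-directed homotopies (or otherwise prove a coordinate-wise, independent-time invariance statement for $W$), rather than citing $h_t(W)\subseteq W$ and Lemma~\ref{LProjection}.
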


\begin{proof}

Since $X_i$ is a regular $CW$-complex, as in the proof of Lemma~\ref{LBaseCase}, 
we can take a mapping cylinder neighbourhood $M_i$ of the basepoint $\ast\in X_i$ such that
each of the fibers $(0,1]\times\{b\}\subseteq M_i$ lie entirely in a single cell in $X_i$. 
Let $f_i\colon\seqm{X_i}{}{[0,1]}$ be the map given by $f_i(\ast)=0$, $f_i(x)=1$ if $x\nin M_i$, 
and $f_i(x)=s$ if $x$ is at the point $(s,b)$ along a fiber $(0,1]\times\{b\}$. Notice that $f_i$ is continuous as the projection to the parameter $s$.
Define $f\colon\seqm{X_1\vee\cdots\vee X_n}{}{[0,1]}$ by restricting to $f_i$ on $X_i$, 
and let $u\colon\seqm{\mc C_m(W)}{}{[0,1]}$ be given for any $\omega=\{(t_1,x_1),\ldots,(t_m,x_m)\}$ by 
$u(\omega)=0$ if $\omega\in \mc C_{m-1}(W)$, otherwise $u(\omega)=\prod_{i=1}^m f(x_i)$.
Notice $u^{-1}(0)=\mc C_{m-1}(W)$.
Define a homotopy $H\colon \seqm{\mc C_m(W)\times \mb R}{}{\mc C_m(W)}$ by homotoping the labels of a configuration 
$\omega\in \mc C_m(W)$ simultaneously as follows. For each particle $(t,x)\in \omega$, 
we either keep the label $x$ fixed if $x$ is a basepoint or $x$ is not in the interior of any mapping cylinder $M_i$, 
or else we homotop $x$ towards the basepoint along the corresponding fibre of $(0,1]\times\{b\}$ of the mapping cylinder. 
This is done at a speed that tends to zero the closer $x$ is to the boundary of corresponding mapping cylinder.
Since each of the fibers of $M_i$ lies in a single cell of $X_i$, 
and since $W$ as a subcomplex of $X$ is a union of products of cells in the $X_i$'s, 
then under this homotopy of labels collided particles still have their set of labels represented by a point in $W$ as necessary.
We stop homotoping the labels of $\omega$ as soon as there are less than $m$ particles (among our homotoped particles in $\omega$) 
that have non-basepoint labels. 
Thus $H_t$ restricts to the identity on $\mc C_{m-1}(W)$.
Notice at least one of the labels is in the interior of a mapping cylinder $M_i$ when $u(\omega)<1$, 
and $u(\omega)$ approaches $1$ if the labels of $\omega$ all approach points outside these mapping cylinders.
Then for any $(s,b)\in (0,1]\times\{b\}$, there is a sufficiently large time $t$ such that $H_t(u^{-1}([0,s)))$ is contained in $\mc C_{m-1}(W)$.
This therefore defines a neighbourhood deformation retraction for $\mc C_{m-1}(W)$ in $\mc C_m(W)$. 
\end{proof}

Let
$$
V_m(W) = \bigvee_{i=1}^{m}\mc D_i(W).
$$
We now define a map
$$
\zeta\wcolon\seqm{\mc C(W)}{}{C(\mb R;V_\infty(W))},
$$
where the right-hand space is the classical configuration space of non-overlapping particles in $\mb R$ with labels in $V_\infty(W)$.
Since $X_1\vee\cdots\vee X_n$ is a $CW$-complex, we may pick a map 
$$
f\wcolon\seqm{X_1\vee\cdots\vee X_n}{}{[0,1]}
$$ 
such that $f(\ast)=0$ and $f(x)>0$ when $x\neq\ast$.
Let $\zeta_0\colon\seqm{\mc C_0(W)}{}{C(\mb R;V_\infty(W))}$ be given by mapping the 
empty configuration to the empty configuration, and define
$$
\zeta_m\wcolon\seqm{\mc C_m(W)-\mc C_{m-1}(W)}{}{C(\mb R;V_\infty(W))}
$$
for $m>0$ as follows. 
Notice that any configuration $\omega=\{(t_1,x_1),\ldots,(t_m,x_m)\}\in\mc C_m(W)-\mc C_{m-1}(W)$ has only non-degenerate particles,
that is, the label $x_i$ of each particle in $\omega$ is not the basepoint $\ast$.
Since any two non-degenerate particles in a configuration in $\mc C(W)$ cannot collide when their labels are in the same summand $X_l$ 
(so $t_i\neq t_j$ when $x_i,x_j\in X_l-\{\ast\}$ by definition of $\mc C(\mb R,\emptyset;W)$), 
there is a strict total order on $\omega$ given for any $y=(t_i,x_i),y'=(t_j,x_j)\in\omega$ by the relation
\begin{itemize}
\item $y\prec y'$ if and only if either $x_i\in X_{i'}-\{\ast\}$ and $x_j\in X_{j'}-\{\ast\}$ such that $i'<j'$, 
or else both $x_i,x_j\in X_l-\{\ast\}$ for some $l$ and $t_i<t_j$.
\end{itemize}
Given $y\in \omega$, let $\sigma(y)$ be the integer such that $y$ is the $\sigma(y)^{th}$ smallest element in $\omega$ 
with respect to this ordering. Then take the strict total order on the set $2^\omega$ of subsets of $\omega$, given for any $S,S'\subseteq\omega$ by the relation
\begin{itemize}
\item $S\prec S'$ if and only if $\csum{y\in S}{}{2^{\sigma(y)}}<\csum{y'\in S'}{}{2^{\sigma(y')}}$. 
%\item $S\prec S'$ if and only if there exists a $y'\in S'$ such that for every $y\in S$, either $y\prec y'$, 
%or else $y'\prec y$ and $y\in S'$. 
%$S\prec S'$ if and only if there exists a $y'\in S'-S'\cap S$ such that $y\prec y'$ for every $y\in S-S'\cap S$.
\end{itemize}
This total order does not change as we move these particles and vary their labels in such a way that 
$\omega$ moves along any path in $\mc C_m(W)-\mc C_{m-1}(W)$,
and if we take a subconfiguration $\omega'\subset\omega$ in $\mc C_{m'}(W)-\mc C_{m'-1}(W)$, $m'<m$,
then the ordering on $2^{\omega'}\subset 2^\omega$ is the one inherited from $2^\omega$.
Write $S\preceq S'$ if either $S\prec S'$ or $S=S'$, and let
$$
\eta_{\omega,S'}=\csummulti{S\subseteq\omega}{S\preceq S'}{}{\paren{\cprod{(t_i,x_i)\in S}{}{f(x_i)}}}.
$$
Since each $x_i\neq\ast$, we have $f(x_i)>0$, so $\eta_{\omega,S}<\eta_{\omega,S'}$ whenever $S\prec S'$.
Then we may define
$$
\zeta_m(\omega)=\cset{(\eta_{\omega,S},\chi_S)}{S\subseteq\omega}\quad\in\quad C(\mb R;V_\infty(W))
$$
where the label $\chi_S\in V_\infty(W)$ is the image of the configuration $S\in\mc C_{|S|}(W)$
under the composite of the quotient map and inclusion
\begin{equation}
\label{EQuotientlabel}
\seqmm{\mc C_{|S|}(W)}{}{\mc D_{|S|}(W)}{include}{V_\infty(W)}
\end{equation}
($S\subseteq\omega$ is indeed a configuration in $\mc C_{|S|}(W)$, 
since every subset of a represented multiset of labels is itself represented by a point in $W$).
Notice that when $\omega$ approaches some configuration $\omega'\in\mc C_{m'}(W)-\mc C_{m'-1}(W)$
for some $m'<m$, that is, when some subset $L$ of at least $m-m'$ labels $x_i$ from $\omega$ approaches the basepoint,
then $\prod_{y=(x_i,t_i)\in S}f(x_i)$ approaches $0$ and $\chi_S$ approaches $\ast\in V_\infty(W)$
for all $S\subseteq\omega$ that contain a particle with a label in $L$. 
Therefore, we see that $\zeta_m(\omega)$ approaches $\zeta_{m'}(\omega')$.
Thus, we can define $\zeta$ by restricting to $\zeta_m$ on $\mc C_m(W)-\mc C_{m-1}(W)$, for any $m>0$, 
and $\zeta_0$ on the empty configuration.

Notice $\zeta_m$ maps to the subspace $C(\mb R;V_m(W))\subseteq C(\mb R;V_\infty(W))$.
Restricting $\zeta$ to a subspace $\mc C_i(W)$, there is a commutative diagram 
\begin{equation}
\label{DBeforeAjoint}
\diagram
\mc C_{i-1}(W)\rto^{}\dto^{\zeta} & \mc C_{i}(W)\rto^{}\dto^{\zeta} & 
\mc D_i(W)\dto^{}\\
C(\mb R;V_{i-1}(W))\rto^{} &C(\mb R;V_i(W))\rto^{} & C(\mb R;\mc D_i(W))
\enddiagram
\end{equation}
with the left-hand and right-hand horizontal maps the inclusions and quotient maps respectively,
and the bottom maps induced by the inclusion \seqm{V_{i-1}(W)}{}{V_i(W)} and the quotient map \seqm{V_i(W)}{}{\mc D_i(W)}. 
Since the composite of these two maps is the constant map, 
the composite of the maps in the bottom sequence is the constant map, which gives the right-hand vertical extension. 
Notice this extension sends a (quotiented) configuration $\omega$ to a configuration with a single particle, 
whose label is $\omega$.  
Recall from the theory of classical labelled configuration spaces~\cite{MR0331377} there is a \emph{scanning map} 
$$
\lambda\wcolon\seqm{C(\mb R;V_\infty(W))}{}{\Omega\Sigma V_\infty(W)},
$$
which in fact is a homotopy equivalence. This is also just a special case of our scanning map constructed above.
Here one takes $\Sigma V_\infty(W)$ as the coordinate suspension of $V_\infty(W)$ on a single coordinate,
meaning $\mc C(\mb R;V_\infty(W))=C(\mb R;V_\infty(W))$.
Composing the vertical maps in Diagram~\eqref{DBeforeAjoint} with the corresponding scanning maps $\lambda$, 
and taking the adjoints of these, we obtain a commutative diagram of cofibration sequences
\begin{equation}
\label{DCofibSplitting}
\diagram
\Sigma\mc C_{i-1}(W)\rto^{}\dto^{\zeta'} & \Sigma\mc C_i(W)\rto^{}\dto^{\zeta'} & 
\Sigma\mc D_i(W)\dto^{\simeq}\\
\Sigma V_{i-1}(W)\rto^{} & \Sigma V_i(W)\rto^{} & 
\Sigma\mc D_i(W),
\enddiagram
\end{equation}
the left and middle vertical maps being the restrictions of the adjoint $\zeta'\colon\seqm{\Sigma\mc C(W)}{}{\Sigma V_\infty(W)}$ 
of the composite $\lambda\circ\zeta$, 
and the bottom cofibration sequence the composite of the inclusion and quotient onto the summand $\Sigma(\mc D_i(W))$.
The right-hand vertical extension can be homotoped to the identity using a homotopy to the identity of the composite
\seqmm{\Sigma V_\infty(W)}{}{\mc C(D^1,\bd D^1;V_\infty(W))}{}{\Sigma V_\infty(W)} 
of the inclusion from Lemma~\ref{LBaseCase} and its left homotopy inverse used to define the scanning map.    
Then the middle vertical $\zeta'$ is a homotopy equivalence whenever the left-hand $\zeta'$ is, 
so by induction, we see that \seqm{\Sigma\mc C(W)}{\zeta'}{\Sigma V_\infty(W)} is a homotopy equivalence. 
Thus
\begin{proposition}
\label{PMainSplitting}
There is a splitting
$$
\Sigma\mc C(W)\simeq \cvee{i\geq 1}{}{\Sigma\mc D_i(W)}.
$$~$\qqed$
\end{proposition}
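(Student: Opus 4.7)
The plan is to realize the splitting by constructing a map $\zeta\colon\seqm{\mc C(W)}{}{C(\mb R;V_\infty(W))}$ into a classical labelled configuration space, postcomposing with the classical scanning equivalence $\lambda\colon\seqm{C(\mb R;V_\infty(W))}{\simeq}{\Omega\Sigma V_\infty(W)}$, and taking the adjoint
$$
\zeta'\wcolon\seqm{\Sigma\mc C(W)}{}{\Sigma V_\infty(W)}\simeq\cvee{i\geq 1}{}{\Sigma\mc D_i(W)},
$$
where the last homotopy equivalence comes from the fact that $V_\infty(W)$ is already a wedge. It then suffices to show that $\zeta'$ is a homotopy equivalence.

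To build $\zeta$ I would send a non-degenerate configuration $\omega\in\mc C_m(W)-\mc C_{m-1}(W)$ to the classical configuration in $\mb R$ whose particles are indexed by the non-empty subsets $S\subseteq\omega$. Each such $S$ is itself a configuration in $\mc C_{|S|}(W)$ (since every sub-multiset of a represented multiset is represented), so it has a well-defined class $[S]\in\mc D_{|S|}(W)\subseteq V_\infty(W)$, which I would use as its label. For the position of the $S$-particle I would use a gadget like $\eta_{\omega,S}=\csum{S'\preceq S}{}{\prod_{(t_i,x_i)\in S'}f(x_i)}$ for an auxiliary $f\colon\seqm{X_1\vee\cdots\vee X_n}{}{[0,1]}$ vanishing exactly on the basepoint, where $\preceq$ is any strict total order on $2^\omega$ determined by the wedge-summand and $\mb R$-position of each particle. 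The key thing I would need to verify is that $\zeta$ extends continuously across the filtration boundaries: when some labels of $\omega$ approach the basepoint, the weights $\prod f(x_i)$ of all subsets $S$ meeting the affected particles tend to $0$, forcing the corresponding particle positions $\eta_{\omega,S}$ to collide while their labels $[S]$ simultaneously go to the basepoint of $V_\infty(W)$, recovering $\zeta$ of the limit configuration in $\mc C_{m'}(W)-\mc C_{m'-1}(W)$.

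With $\zeta$ in hand, I would prove that $\zeta'$ is a homotopy equivalence by induction on $m$ along the filtration, comparing the two cofibration sequences in diagram~\eqref{DCofibSplitting}. The top row is a cofibration sequence by Lemma~\ref{LNDR}; the bottom row is the evident inclusion-quotient sequence of a wedge. The induced map on cofibres is $\Sigma\mc D_m(W)\to\Sigma\mc D_m(W)$, and a short check using Lemma~\ref{LBaseCase} (the very same deformation retraction that underlies the scanning map $\lambda$) shows this map is homotopic to the identity. The base case $m=0$ is trivial since $\mc C_0(W)=\ast=V_0(W)$, and the inductive step then follows from a standard cofibre comparison argument; taking the colimit over $m$ gives that $\zeta'$ itself is a homotopy equivalence, completing the splitting.

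The main obstacle I anticipate is entirely in the bookkeeping of $\zeta$: making sure the total order on $2^\omega$ varies locally constantly along paths inside each stratum, that the positions $\eta_{\omega,S}$ remain genuinely distinct whenever $|S|\neq|S'|$ or $S\neq S'$ (so $\zeta(\omega)$ is a valid non-colliding configuration in $\mb R$), and that the degeneration at strata boundaries matches the quotient relations used to define both $\mc C(W)$ and the $\mc D_i(W)$'s. Once this is set up cleanly, the homotopy-equivalence argument is a formal induction.
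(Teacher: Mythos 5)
Your proposal is correct and follows essentially the same route as the paper: the same subset-indexed map $\zeta$ with positions $\eta_{\omega,S}$ weighted by a function $f$ vanishing only at the basepoint, composition with the classical scanning equivalence $\lambda$, and an inductive cofibre comparison along diagram~\eqref{DCofibSplitting} using Lemma~\ref{LNDR} and the retraction of Lemma~\ref{LBaseCase} to identify the induced map on cofibres with the identity. The continuity and ordering issues you flag are exactly the points the paper checks, and they are handled there just as you outline.
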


\begin{remark}
If each $X_i=Y$ is a regular $CW$-complex, and $W$ is a polyhedral product of the form $(Y,\ast)^K$,
then by~\cite{2009arXiv0901.2871D}, $\mc C(W)$ is homotopy equivalent to $\Omega (\Sigma Y,\ast)^K$. 
This improves on the stable splitting of $\Omega (\Sigma Y,\ast)^K$ in~\cite{2009arXiv0901.2871D}.
\end{remark}

\subsection{A Refinement}

Suppose $W$ has the simplicial property.
Recall our filtration $\ast=W_0\subseteq W_1\subseteq\cdots\subseteq W_n=W$, where $W_k=\bigcup_{I\subseteq [n],|I|=k}W_I$. 
Again, $\mc C_m(W_k)$ denotes $\mc C_m(\mb R,\emptyset;W_k)$,
these configuration spaces being constructed by thinking of $W_k$ as a subspace of $X$.
We still refer to elements in $\mc D_m(W_k)=\mc C_m(W_k)/\mc C_{m-1}(W_k)$ as configurations of labelled particles, 
in the sense that they are under the image of the quotient map \seqm{\mc C_m(W_k)}{}{\mc D_m(W_k)}.
They must have exactly $m$ particles, 
and are identified with the basepoint when one of them is a degenerate particle, that is, 
when its label is a basepoint. The empty configuration is the basepoint. 
Since $\mc C_m(W_k)$ is a subspace of $SP_m(Y)$, $Y=(\mb R\times X^\vee)/(\mb R\times\{\ast\})$, 
then $\mc D_m(W_k)$ is a subspace of $\hat{SP}_m(Y)=SP_m(Y)/SP_{m-1}(Y)$.

We call a sequence $\mc S=(s_1,\ldots,s_n)$ an $(m,n)$-\emph{partition} of a non-negative integer $m$ 
if $s_1+\cdots+s_n=m$ and each $s_i\geq 0$.
Given such a sequence, let $\mc D_{\mc S}(W_k)$ be the subspace of $\mc D_m(W_k)$ consisting of configurations $\omega$ 
such that either exactly $s_i$ non-degenerate particles in $\omega$ have labels in $X_i$ for each $1\leq i\leq n$, or else $\omega$ is the basepoint. 
Notice $\mc D_m(W_k)$ is the union of $\mc D_{\mc S}(W_k)$ over all $(m,n)$-partitions.
To homotope the label of a particle in a configuration in $\mc D_m(W_k)$ from one summand $X_j$ to another summand $X_i$ we must make at least one label go through the basepoint, in which case the entire configuration must go through the basepoint. 
Therefore $\mc D_{\mc S}(W_k)\cap \mc D_{\mc S'}(W_k)=\{\ast\}$ for any $(m,n)$-partitions $\mc S$ and $\mc S'$, and
\begin{equation}
\label{EPartitions}
\mc D_m(W_k)=\cvee{(m,n)-\mbox{partitions }\mc S}{}{\mc D_{\mc S}(W_k)}
\end{equation}
wedged at the basepoint, and the inclusion 
$$
\iota_{m,k}\wcolon\seqm{\mc D_m(W_{k-1})}{}{\mc D_m(W_k)}
$$ 
induced by \seqm{W_{k-1}}{}{W_k}, and extending $\bar\iota_{m,k}\colon\seqm{\mc C_m(W_{k-1})}{}{\mc C_m(W_k)}$,
restricts to an inclusion 
$$
\iota_{\mc S,k}\colon\seqm{\mc D_{\mc S}(W_{k-1})}{}{\mc D_{\mc S}(W_k)}
$$
for each $(m,n)$-partition $\mc S$. 
Given a basepoint preserving map \seqm{X}{f}{Y} of non-degenerately basepointed spaces, 
let $M(f)=M_f/([0,1]\times\{\ast\})=(Y\cup_f [0,1]\times X)/([0,1]\times\{\ast\})$ be the reduced mapping cylinder of $f$, 
and $\mb C(f)=M(f)/(\{1\}\times X)$ the reduced mapping cone of $f$ (or homotopy cofiber of $f$). 
We write points in $\mb C(f)$ as pairs $(t,y)$, where $t\in [0,1]$, $y\in Y$ if $t=0$, $y\in X$ if $t>0$,
and $(t,y)$ is the basepoint $\ast$ when $t=1$ or $y$ is the basepoint. 
Since $\iota_{m,k}$ is as a wedge sum of $\iota_{\mc S,k}$ over $(m,n)$-partitions $\mc S$,
\begin{equation}
\label{ECofibPartitions}
\mb C(\iota_{m,k}) \cong \cvee{(m,n)-\mbox{partitions }\mc S}{}{\mb C(\iota_{\mc S,k})}.
\end{equation}

Notice that the following property holds when $W$ has the simplicial property. 
\begin{itemize}
\item[$(A)$]
For each $m<k$ a multiset $\{\{x_1,\ldots,x_m\}\}$ is represented by a point in $W_k$ if and only it is represented by a point in $W_m$.
\end{itemize}

\begin{lemma}
\label{LHomeomorphism}
Assume $W$ has the simplicial property. If at most $k-1$ elements in $\mc S$ are nonzero, 
then $\mc D_{\mc S}(W_{k-1})=\mc D_{\mc S}(W_k)$ as subspaces of $\hat{SP}_m(Y)$, 
and $\iota_{\mc S,k}$ is the identity.
In particular, $\mc D_m(W_{k-1})=\mc D_m(W_k)$, $\iota_{m,k}$ is the identity, when $m\leq k-1$. 
\end{lemma}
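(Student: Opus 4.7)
The plan is to reduce the equality $\mc D_{\mc S}(W_{k-1}) = \mc D_{\mc S}(W_k)$ to an elementary statement about representability of label multisets. Set $I = \{i \in [n] : s_i > 0\}$, so by hypothesis $|I| \leq k-1$. Any element of $\mc D_{\mc S}(W_k)$ other than the basepoint is the image, under the quotient \seqm{\mc C_m(W_k)}{}{\mc D_m(W_k)}, of a configuration whose $m$ non-degenerate particles carry labels distributed entirely among the factors $\{X_i : i \in I\}$. Since both $\mc D_{\mc S}(W_{k-1})$ and $\mc D_{\mc S}(W_k)$ sit inside $\hat{SP}_m(Y)$ as quotients of subspaces of the common ambient space $SP_m(Y)$, it will suffice to show that $\mc C_m(W_{k-1})$ and $\mc C_m(W_k)$ contain exactly the same configurations of label distribution $\mc S$, with matching subspace topologies.

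The key observation is that any two particles sharing the same position in a configuration in $\mc C(W)$ must have labels in distinct factors $X_i$, since the representability condition demands an injective function from the label indices to $[n]$. Hence any collision among the particles of such a configuration involves at most $|I| \leq k-1$ particles, with labels in distinct factors indexed by a subset of $I$. The question therefore reduces to: if a multiset of size $s \leq k-1$ is represented by a point in $W_k$, is it represented by a point in $W_{k-1}$? This is exactly property $(A)$ stated just before the lemma, and it follows at once from the simplicial property: given a representing $p \in W_J$ with $|J| = k$, the $s < k = |J|$ labels occupy only $s$ of the coordinates of $p$, so some index $i^* \in J$ is not used by the representing function, and replacing $p_{i^*}$ with the basepoint via the simplicial property lands us in $W_{J - \{i^*\}} \subseteq W_{k-1}$ while still representing the multiset.

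Applying this collision by collision to each $\omega$ with label distribution $\mc S$ shows $\omega \in \mc C_m(W_k)$ if and only if $\omega \in \mc C_m(W_{k-1})$. Since the subspace topology inherited from $SP_m(Y)$ agrees on both sides, the resulting subspaces of $\hat{SP}_m(Y)$ coincide, and $\iota_{\mc S,k}$ is tautologically the identity. For the ``in particular'' clause, any $(m,n)$-partition with $m \leq k-1$ has at most $m \leq k-1$ nonzero entries (each being at least $1$), so the first part of the lemma applies to every summand in the wedge decomposition~\eqref{EPartitions} of $\mc D_m(W_k)$, whence $\mc D_m(W_{k-1}) = \mc D_m(W_k)$ and $\iota_{m,k}$ is the identity. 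The argument is mostly bookkeeping; the only mildly subtle point worth getting right is recognizing that representability bounds every collision by $|I|$, rather than by the total particle count $m$, so that property $(A)$ can actually be invoked.
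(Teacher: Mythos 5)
Your proposal is correct and follows essentially the same route as the paper's proof: bound each collision by the number of nonzero entries of $\mc S$ (since colliding particles must carry labels in distinct factors) and then invoke Property $(A)$ to move the representing point from $W_k$ down to $W_{k-1}$. The only difference is cosmetic — you spell out the simplicial-property argument behind Property $(A)$, which the paper asserts without proof, and you make the set-equality/topology bookkeeping in $\hat{SP}_m(Y)$ explicit.
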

\begin{proof}
Any non-degenerate configuration $\omega\in\mc D_{\mc S}(W_k)$ has $m$ particles when $\mc S$ is an $(m,n)$-partition,
but since collisions are not allowed between particles with labels in the same $X_i$, 
no more than $k-1$ can collide at a point when at most $k-1$ elements in $\mc S$ are nonzero.
Since the labels of any $j\leq k-1$ particles in $\omega$ that are collided at a point are represented by a point in $W_k$, 
by Property~$(A)$ they are represented by a point in $W_j\subseteq W_{k-1}$. 
Therefore $\omega\in\mc D_{\mc S}(W_{k-1})$. 
\end{proof}

\subsection{Comparing Filtrations}

We again assume each $X_i$ is a connected basepointed regular $CW$-complex,
$W$ is a connected subcomplex of their product $X$ containing the basepoint and having a faithful neighbourhood.
Moreover, we assume $W$ has the simplicial property, and that its subcomplex $W_{n-1}$ also has a faithful neighbourhood.
Note $W^1_k$ is simply connected by Lemma~\ref{LSimplyConnected}. 

Take the $(n,n)$-partition $\mc A=(1,\ldots, 1)$.
Then $\mc D_{\mc A}(W_k)$ is the subspace of $\mc D_n(W_k)$ containing those configurations of $n$ particles 
whose labels are each in a distinct $X_i$. Our goal will be to prove the following.

\begin{proposition}
\label{PCofibDiagram}
There exists a homotopy commutative diagram
\[\diagram
\mb C(\iota_{\mc A,n})\rto^-{\partial} &
\Sigma\mc D_{\mc A}(W_{n-1})\rto^{\Sigma\iota_{\mc A,n}}\dto^{} & \Sigma\mc D_{\mc A}(W_n)\rto^{}\dto^{} & 
\Sigma\mb C(\iota_{\mc A,n})\dto^{\simeq}\\
& \Sigma\mc C_n(W_{n-1})\rto^{\Sigma\bar\iota_{n,n}}\dto^{ev} & \Sigma\mc C_n(W_n)\rto^{}\dto^{ev} & 
\Sigma\mb C(\bar\iota_{n,n})\dto^{\simeq}\\
\Sigma^{n-1}\hat W_n\uuto^{\phi}_{\simeq}\rto^-{} & W^1_{n-1}\rto^{} & W^1_n\rto^{} & \hat W^1_n,
\enddiagram\]
where consecutive maps in each row form homotopy cofibration sequences,
the right-hand vertical homotopy equivalences are extensions induced by 
the homotopy commutativity of the middle squares. 
The top second and third vertical maps are homotopy inverses of the quotient maps coming from the splitting in
Proposition~\ref{PMainSplitting}, and the bottom maps $ev$ are the adjoints 
\seqmm{\Sigma\mc C_n(W_i)}{\Sigma\gamma}{\Sigma\Omega W^1_i}{eval}{W^1_i} 
of the corresponding scanning map $\gamma$. The map $\partial$ is the connecting map,
given by collapsing the subspace $\mc D_{\mc A}(W_n)$ of $\mb C(\iota_{\mc A,n})$. 
\end{proposition}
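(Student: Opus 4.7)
The plan is to assemble the diagram row by row, verify each square commutes, identify the right-hand cofiber equivalences, and finally extract $\phi$. First, all three rows are (suspensions of) homotopy cofibration sequences. The bottom row is the Puppe sequence of the pair $(W^1_n, W^1_{n-1})$ extended to the left by desuspending the connecting map $\hat W^1_n \to \Sigma W^1_{n-1}$; this desuspension is valid because $W^1_{n-1}$ is simply connected by Lemma~\ref{LSimplyConnected} and $\hat W^1_n \cong \Sigma^n \hat W_n$ is a suspension. For the middle row, the inclusion $\bar\iota_{n,n}$ is a cofibration by an NDR-pair argument patterned on the proof of Lemma~\ref{LNDR}, using the assumed faithful neighbourhood of $W_{n-1}$. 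The top row arises in the same way; alternatively it is the wedge summand of the Puppe sequence for $\iota_{n,n}$ picked out by~\eqref{EPartitions} and~\eqref{ECofibPartitions}.

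Next I would verify the two inner squares and the top-right cofiber equivalence. The middle-bottom square is exactly Lemma~\ref{LScanningNatural} applied to the coordinate-wise inclusion $W_{n-1} \hookrightarrow W_n$, then passed to adjoints. The middle-top square follows from the naturality of the splitting of Proposition~\ref{PMainSplitting}: the map $\zeta'$ of Section~\ref{SSplitting} is functorial in $W$, so the inclusions of the $\mc D_{\mc A}$-summand (the asserted homotopy inverses of the quotient maps) intertwine the two copies of $\Sigma \bar\iota_{n,n}$. For the top-right equivalence, I would apply the same filtered splitting to $\bar\iota_{n,n}$ to decompose $\Sigma \mb C(\bar\iota_{n,n})$ as a wedge of pieces $\Sigma \mb C(\iota_{\mc S,k})$ indexed by partitions; Lemma~\ref{LHomeomorphism} forces every such $\iota_{\mc S,k}$ with at most $n-1$ nonzero entries in $\mc S$ to be the identity, and among $(n,n)$-partitions only $\mc A = (1,\ldots,1)$ has all $n$ entries nonzero, so only the summand $\Sigma \mb C(\iota_{\mc A,n})$ survives.

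The main obstacle is the bottom-right cofiber equivalence $\Sigma \mb C(\bar\iota_{n,n}) \to \hat W^1_n$. I would analyse the scanning-map adjoint $ev$ explicitly on the top stratum: a configuration of $n$ particles collided at some $t \in \mb R$ with non-degenerate labels $(x_1,\ldots,x_n)$ representing a point of $W \setminus W_{n-1}$ scans, via the deformation retraction $r$ of Lemma~\ref{LBaseCase}, to $((t',x_1),\ldots,(t',x_n)) \in W^1_n$ for a coordinate $t'$ depending on the suspension parameter. Passing to the quotient by $W^1_{n-1}$ this realises the homeomorphism $\hat W^1_n \cong \Sigma^n \hat W_n$ directly on the cofiber. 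Both source and target of the induced cofiber map are simply-connected CW-complexes (source by being a suspension, target by Lemma~\ref{LSimplyConnected}), so checking that the explicit formula yields a homology isomorphism---a direct calculation summand by summand---is enough to invoke Whitehead's theorem. Finally, $\phi$ is defined as the desuspension of the composite right-hand equivalence $\Sigma \mb C(\iota_{\mc A,n}) \simeq \Sigma \mb C(\bar\iota_{n,n}) \simeq \hat W^1_n \cong \Sigma(\Sigma^{n-1}\hat W_n)$, and commutativity of the outer-left square with $\phi$ follows from the naturality of the Puppe construction.
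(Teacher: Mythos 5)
Your outline of the top and middle rows matches the paper: the partition splitting together with Lemma~\ref{LHomeomorphism} does isolate $\Sigma\mb C(\iota_{\mc A,n})$ as the only surviving cofiber, the sections of the quotient maps do come from inverting $\zeta'$ in diagram~\eqref{DCofibSplitting}, and the middle-bottom square is naturality of scanning. The genuine gaps are at the two places where you invoke desuspension and a ``direct calculation''. First, neither the connecting map $\hat W^1_n\to\Sigma W^1_{n-1}$ nor the composite equivalence $\Sigma\mb C(\iota_{\mc A,n})\to\hat W^1_n\cong\Sigma(\Sigma^{n-1}\hat W_n)$ can be desuspended on the grounds you give: simple connectivity of $W^1_{n-1}$ and the fact that $\hat W^1_n$ is a suspension do not put you in the Freudenthal range (the cells of $\Sigma^{n-1}\hat W_n$ can sit in dimensions far above twice the connectivity of $W^1_{n-1}$), an unstable desuspension of a homotopy equivalence need not exist, and even if some desuspended map existed it would not be canonical, so ``naturality of the Puppe construction'' cannot be used to force the left square to commute. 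The paper avoids this entirely: $\phi$ is an explicitly defined inclusion $\Sigma^{n-1}\hat W_n\to\mb C(\iota_{\mc A,n})$, Lemma~\ref{LCofib} produces a deformation retraction of $\mb C(\iota_{\mc A,n})$ onto its image (this is where the $n-1$ extra suspension coordinates — the cone parameter and the relative particle positions — are accounted for), and the leftward extension of the bottom row, together with the left square, is then obtained from the top row by Lemma~\ref{LExtend}, the bottom-left map being \emph{defined} as the relevant composite through $\partial$.

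Second, the bottom-right equivalence $\Sigma\mb C(\bar\iota_{n,n})\to\hat W^1_n$ cannot be dispatched by ``checking a homology isomorphism summand by summand'': the extension of $ev$ to the cofiber is not unique, so before any computation you must construct a specific such map, and identifying what scanning does modulo $W^1_{n-1}$ already requires the quotiented scanning map $\hat\gamma$, which in turn needs the deformation retraction of $\mc D^\partial_n$ onto $\hat W^1_n$ (Lemma~\ref{LBaseCase2}) and its compatibility with $\gamma$ (Lemma~\ref{LScanningNatural2}). The paper builds the map as $\bar\rho=\rho\circ\hat q$, where $\rho$ comes from an explicit nullhomotopy of $\hat{ev}\circ\Sigma\iota_{n,n}$ (radially expanding the at most $n-1$-fold collided particles out of every scanning window), and proves it is an equivalence not by a homology computation but by the explicit homotopy $\rho_{\mc A}\circ\Sigma\phi\simeq\mathrm{id}$ of Corollary~\ref{CHomotopic} together with a diagram chase showing $\hat q$ is an equivalence — all of which hinges on the explicit $\phi$ that your proposal postpones to the last step and never actually constructs. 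Note also that the naive point-set quotient $\mc C_n(W_n)/\mc C_n(W_{n-1})$ is badly behaved (the all-collided stratum is not open), so the homotopy type of $\mb C(\bar\iota_{n,n})$ really has to be extracted through the chain of comparisons $\mb C(\iota_{\mc A,n})\to\mb C(\iota_{n,n})\to\mb C(\bar\iota_{n,n})$ as the paper does, not read off stratum by stratum.
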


To some extent, this reduces the study of the filtration of $W^1=W^1_n$ by the spaces $W^1_k$
to the corresponding filtration for $\mc D_{\mc A}(W_n)$.

\begin{corollary}
\label{CInverse}
Considering the following:
\begin{itemize}
\item[(a)] \seqm{W^1_n}{}{\hat W^1_n} has a right homotopy inverse;
\item[(b)] \seqm{\Sigma\mc D_{\mc A}(W_n)}{}{\Sigma\mb C(\iota_{\mc A,n})}
has a right homotopy inverse;
\item[(c)] \seqm{\mb C(\iota_{\mc A,n})}{\partial}{\Sigma\mc D_{\mc A}(W_{n-1})} is nullhomotopic.
\end{itemize}
Then $(c)\Leftrightarrow (b)\Rightarrow (a)$.
\end{corollary}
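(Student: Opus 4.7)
The plan is to exploit the homotopy commutative diagram of Proposition~\ref{PCofibDiagram}, whose top row is the Puppe cofibration sequence extending $\mc D_{\mc A}(W_{n-1})\to \mc D_{\mc A}(W_n)\xrightarrow{q} \mb C(\iota_{\mc A,n})$ by its connecting map $\partial$, and whose rightmost column identifies $\Sigma\mb C(\iota_{\mc A,n})$ with $\hat W^1_n$ via a homotopy equivalence.

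To establish $(c)\Leftrightarrow(b)$, I will invoke the standard cofibration-theoretic fact that in a sequence $X\to Y\xrightarrow{g}Z\xrightarrow{\delta}\Sigma X$, the map $g$ admits a right homotopy inverse precisely when $\delta$ is nullhomotopic. Applied to the cofibration $\mc D_{\mc A}(W_{n-1})\to\mc D_{\mc A}(W_n)\xrightarrow{q}\mb C(\iota_{\mc A,n})$ with connecting map $\partial$, hypothesis $(c)$ supplies a right homotopy inverse of $q$; suspending and post-composing with the equivalence $\Sigma\mb C(\iota_{\mc A,n})\simeq\hat W^1_n$ yields $(b)$. Conversely, applying the same criterion to the once-suspended cofibration $\Sigma\mc D_{\mc A}(W_{n-1})\to\Sigma\mc D_{\mc A}(W_n)\xrightarrow{\Sigma q}\Sigma\mb C(\iota_{\mc A,n})$ shows $(b)$ forces $\Sigma\partial$ to be nullhomotopic; I will then use the equivalence $\phi\colon\Sigma^{n-1}\hat W_n\xrightarrow{\simeq}\mb C(\iota_{\mc A,n})$ from the diagram to regard $\partial$ as a map of suspensions and desuspend the nullhomotopy, appealing to the connectivity of $\Sigma\mc D_{\mc A}(W_{n-1})$ relative to the dimension of $\Sigma^{n-1}\hat W_n$.

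For $(b)\Rightarrow(a)$, I will take a right homotopy inverse $s\colon\hat W^1_n\to\Sigma\mc D_{\mc A}(W_n)$ supplied by $(b)$ and precompose it with the downward composite in the third column of the diagram, namely $\Sigma\mc D_{\mc A}(W_n)\to\Sigma\mc C_n(W_n)\xrightarrow{ev}W^1_n$. The homotopy commutativity of the two rightmost squares, combined with the equivalence filling the rightmost column, guarantees that the resulting composite $\hat W^1_n\to W^1_n$ is a right homotopy inverse of the quotient $W^1_n\to\hat W^1_n$.

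The most delicate step is the desuspension argument within $(b)\Rightarrow(c)$: the implication $\Sigma\partial$ null $\Rightarrow$ $\partial$ null is not formal and must make essential use of the suspension structure on $\mb C(\iota_{\mc A,n})$ made explicit by $\phi$.
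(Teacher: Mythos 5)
Your implication $(b)\Rightarrow(a)$ is fine and is exactly how the paper uses Proposition~\ref{PCofibDiagram}, and the direction of $(c)\Rightarrow(b)$ that you need is also recoverable, although the \quoter{standard fact} you quote is only standard in one direction: from $\partial\simeq\ast$ one gets a splitting of the \emph{suspended} cofibration (the homotopy cofibre of the nullhomotopic $\partial$ is $\Sigma\mc D_{\mc A}(W_{n-1})\vee\Sigma\mb C(\iota_{\mc A,n})$, and it is also $\Sigma\mc D_{\mc A}(W_n)$ since \seqmm{\mb C(\iota_{\mc A,n})}{\partial}{\Sigma\mc D_{\mc A}(W_{n-1})}{\Sigma\iota_{\mc A,n}}{\Sigma\mc D_{\mc A}(W_n)} is a homotopy cofibration, with $\Sigma q$ corresponding to the pinch map), which is all that $(b)$ asks for. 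The unsuspended claim that $(c)$ gives a right homotopy inverse of $q\colon\seqm{\mc D_{\mc A}(W_n)}{}{\mb C(\iota_{\mc A,n})}$ itself is not a general property of cofibration sequences and is not needed.

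The genuine gap is in $(b)\Rightarrow(c)$. Your argument produces only $\Sigma\partial\simeq\ast$ and then proposes to desuspend the nullhomotopy by a connectivity estimate. No such estimate is available here: the source $\mb C(\iota_{\mc A,n})\simeq\Sigma^{n-1}\hat W_n\simeq\Sigma^{n}|K|$ has dimension as large as $2n-1$, whereas $\Sigma\mc D_{\mc A}(W_{n-1})$ is in general only simply connected, so the suspension map $[\mb C(\iota_{\mc A,n}),\Sigma\mc D_{\mc A}(W_{n-1})]\longrightarrow[\Sigma\mb C(\iota_{\mc A,n}),\Sigma^{2}\mc D_{\mc A}(W_{n-1})]$ is far outside the Freudenthal range and need not be injective; nor does the fact that the source is a suspension help, since that only provides a group structure on the homotopy set, not injectivity of suspension. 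The paper's proof avoids desuspension altogether: from $(b)$ the suspended cofibration \seqmm{\Sigma\mc D_{\mc A}(W_{n-1})}{\Sigma\iota_{\mc A,n}}{\Sigma\mc D_{\mc A}(W_n)}{}{\Sigma\mb C(\iota_{\mc A,n})} trivializes, so $\Sigma\iota_{\mc A,n}$ admits a \emph{left} homotopy inverse $\hat\iota$; since $\partial$ already lands in $\Sigma\mc D_{\mc A}(W_{n-1})$ and the next Puppe map is $\Sigma\iota_{\mc A,n}$, the composite $\Sigma\iota_{\mc A,n}\circ\partial$ is nullhomotopic without any suspension, whence $\partial\simeq\hat\iota\circ\Sigma\iota_{\mc A,n}\circ\partial\simeq\ast$. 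You should replace your desuspension step with this (or an equivalent) argument; as written, that step fails.
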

\begin{proof}
The implication $(b)\Rightarrow(a)$ holds by Proposition~\ref{PCofibDiagram}, 
while $(c)\Rightarrow (b)$ is a standard property of homotopy cofibration sequences.
Suppose $(b)$ holds. Then the homotopy cofibration sequence 
\begin{equation}
\label{EHCofib}
\seqmm{\Sigma\mc D_{\mc A}(W_{n-1})}{\Sigma\iota_{\mc A,n}}{\Sigma\mc D_{\mc A}(W_n)}{}{\Sigma\mb C(\iota_{\mc A,n})}
\end{equation}
trivializes. In particular $\Sigma\iota_{\mc A,n}$ has a left homotopy inverse $\hat\iota$. 
Since \seqmm{\mb C(\iota_{\mc A,n})}{\partial}{\Sigma\mc D_{\mc A}(W_{n-1})}{\Sigma\iota_{\mc A,n}}{\Sigma\mc D_{\mc A}(W_n)}
is a homotopy cofibration sequence, $\Sigma\iota_{\mc A,n}\circ\partial$ is nullhomotopic, 
so $(\hat\iota\circ\Sigma\iota_{\mc A,n})\circ\partial\simeq\partial$ is nullhomotopic. Thus $(b)\Rightarrow (c)$. 

\end{proof}

\begin{corollary}
\label{CcoHspace}
$W^1_n$ is a co-$H$-space if one of the conditions in Corollary~\ref{CInverse} hold for $W_I$ in place of $W$
for each $I\subseteq [n]$.
\end{corollary}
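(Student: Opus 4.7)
The plan is to reduce the statement directly to Proposition~\ref{PSplitting}, specifically to the implication $(e) \Rightarrow (a)$ applied with $\ell = 1$ and $k = n$. Since $W$ is connected by hypothesis, the connectedness condition required in Proposition~\ref{PSplitting} for the $\ell = 1$ case is satisfied, and so it suffices to verify condition $(e)$: that the quotient map $W^1_I \to \hat W^1_I$ admits a right homotopy inverse for every $I \subseteq [n]$.

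To produce these right homotopy inverses, I would apply Corollary~\ref{CInverse} with $W_I$ in place of $W$ (so that $[n]$ is replaced by $I$ and $n$ by $|I|$). By hypothesis one of conditions $(a)$, $(b)$, $(c)$ of that corollary holds for $W_I$; since any of them implies $(a)$, we conclude that the quotient
$$
(W_I)^1_{|I|} \longrightarrow \hat{(W_I)}^1_{|I|}
$$
has a right homotopy inverse. Under the natural identifications $(W_I)^1_{|I|} \cong W^1_I$ and $\hat{(W_I)}^1_{|I|} \cong \hat W^1_I$ (which follow straight from the definitions of the coordinate-suspension filtration and the simplicial property), this is exactly condition $(e)$ of Proposition~\ref{PSplitting} at $I$. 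Ranging over all $I\subseteq [n]$ with $|I| \leq n$ then yields condition $(e)$ in full, whence $(a)$ holds and $W^1_n$ is a co-$H$-space.

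The only nontrivial bookkeeping is that the standing hypotheses of Proposition~\ref{PSplitting} and Corollary~\ref{CInverse} really do transfer from $W \subseteq X$ to each $W_I \subseteq X_I$. The subcomplex $W_I$ inherits the simplicial property from $W$ (restricting a basepoint-insertion to the coordinates in $I$), and Lemma~\ref{LProjection} gives $W_I$ a faithful neighbourhood as the projection of the one for $W$. Connectedness of each $W_I$ follows from connectedness of $W$: the projection $X \to X_I$ is continuous and surjective onto $W_I$, so $W_I$ is connected, and the argument in Lemma~\ref{LSimplyConnected} then shows that $W^1_I$ is simply connected.

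I expect the only real subtlety to be the implicit assumption, built into the phrase ``one of the conditions in Corollary~\ref{CInverse} hold for $W_I$,'' that the corresponding ``$(n-1)$-skeleton'' $(W_I)_{|I|-1}$ also has a faithful neighbourhood — without this one cannot even invoke Corollary~\ref{CInverse} on $W_I$. In the polyhedral product setting of Example~\ref{EFaithful} this is automatic, and in the general setting it is part of what the hypothesis is tacitly asking for; once it is granted, the deduction above is purely formal.
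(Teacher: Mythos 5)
Your proof is correct and is exactly the paper's argument: the paper's entire proof is ``Combine Corollary~\ref{CInverse} with Proposition~\ref{PSplitting}'', i.e.\ apply Corollary~\ref{CInverse} to each $W_I$ to get condition (a) there, identify this with condition (e) of Proposition~\ref{PSplitting} for $\ell=1$, $k=n$, and conclude via $(e)\Rightarrow(a)$. Your added bookkeeping (the identification $(W_I)^1_{|I|}\cong W^1_I$, transfer of the simplicial property, Lemma~\ref{LProjection} for faithful neighbourhoods, and the tacit hypothesis on $(W_I)_{|I|-1}$) just makes explicit what the paper leaves implicit.
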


\begin{proof}
Combine Corollary~\ref{CInverse} with Proposition~\ref{PSplitting}.
\end{proof}

\subsection{Proof of Proposition~\ref{PCofibDiagram}}

Take a configuration $\omega=(t_1,\ldots,t_n;x_1,\ldots,x_n)\in \mc D_{\mc A}(W_n)$.
Configurations in $\mc D_{\mc A}(W_n)$ have particles whose labels are each in a distinct $X_i$,
so we can and will assume without loss of generality that $x_i\in X_i$. 
Given $x=(x_1,\ldots,x_n)\in\hat W_n$, then $\{x_1,\ldots,x_n\}$ is represented by $(x_1,\ldots,x_n)\in W_n$,
so there is an inclusion \seqm{\hat W_n}{}{\mc D_{\mc A}(W_n)} given by mapping $x\mapsto (0,\ldots,0;x_1,\ldots,x_n)$.
We can extend this to an inclusion 
$$
\phi\wcolon\seqm{\Sigma^{n-1}\hat W_n}{}{\mb C(\iota_{\mc A,n})}
$$
by mapping the basepoint to the basepoint, otherwise
$$
((t_1,\ldots,t_{n-1}),(x_1,\ldots,x_n))\mapsto \paren{\max\{|t_1|,\ldots,|t_{n-1}|,0\},(t_1,\ldots,t_{n-1},0;x_1,\ldots,x_n)},
$$ 
where each parameter $t_i$ is in the unit $1$-disk $D^1=[-1,1]$. 
Notice $\phi$ is a homeomorphism of $\Sigma^{n-1}\hat W_n$ onto its image, 
so we regard $\Sigma^{n-1}\hat W_n$ as a subspace of $\mb C(\iota_{\mc A,n})$ under this inclusion.

\begin{remark}
Since points in $W_{n-1}$ have at least one coordinate a basepoint,
no more than $n-1$ non-degenerate particles in a configuration in $D_{\mc A}(W_{n-1})$ can collide at a point.
Therefore any non-degenerate point $(t,(t_1,\ldots,t_n;x_1,\ldots,x_n))\in\mb C(\iota_{\mc A,n})$ must have $t=0$ when $t_1=\cdots=t_n$,
i.e., we must be at the base of the mapping cone $D_{\mc A}(W_n)\subseteq\mb C(\iota_{\mc A,n})$.
\end{remark}

\begin{lemma}
\label{LCofib}
There is a deformation retraction of $\mb C(\iota_{\mc A,n})$ onto the subspace $\Sigma^{n-1}\hat W_n$
given by the inclusion $\phi$.
\end{lemma}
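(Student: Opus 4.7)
The plan is to construct an explicit deformation retraction $F\colon [0,1]\times\mb C(\iota_{\mc A,n})\to\mb C(\iota_{\mc A,n})$ onto $\phi(\Sigma^{n-1}\hat W_n)$ that simultaneously translates the $n$th particle to the origin, rescales the remaining particles into $[-1,1]$, and adjusts the cone parameter so that at $s=1$ it equals $\max_{i<n}|t_i|$, which is precisely the relation cutting out the image of $\phi$.

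Given a non-basepoint representative $(t,(t_1,\ldots,t_n;x_1,\ldots,x_n))$, set $M=\max_{i\leq n-1}|t_i-t_n|$, $M'=\min(M,1)$, and, when $M>0$, $\gamma=\max(t,M')/M$. I would define
$$
F_s(t,\omega)=\bigl(c(s),\,(p_1(s),\ldots,p_{n-1}(s),q(s);x_1,\ldots,x_n)\bigr),
$$
with $c(s)=(1-s)t+s\max(t,M')$, $p_i(s)=(1-s)t_i+s\gamma(t_i-t_n)$, and $q(s)=(1-s)t_n$. When $M=0$ all $t_i-t_n$ vanish, so the $\gamma$-term drops out; by the remark preceding the lemma this occurs only when $t=0$. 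One then verifies in turn: $F_0=\mathrm{id}$; the endpoint $F_1$ lands in $\phi(\Sigma^{n-1}\hat W_n)$ since $q(1)=0$, $\max_i|p_i(1)|=\gamma M=\max(t,M')=c(1)$, and $c(1)\leq 1$; and points already in the image satisfy $t_n=0$ with $t=M=M'\leq 1$, forcing $\gamma=1$, so $F_s$ fixes them for every $s$. Intermediate configurations remain valid because $p_i(s)$ and $q(s)$ are affine combinations of the original positions, so $p_i(s)=p_j(s)$ iff $t_i=t_j$; the collision structure is therefore preserved, and with it membership in $\mc D_{\mc A}(W_n)$ or $\mc D_{\mc A}(W_{n-1})$. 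Moreover $c(s)\geq(1-s)t$, so the cone parameter stays positive whenever $t>0$ and the configuration remains in the cone region throughout.

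The main obstacle is verifying continuity of $F_s$ on the quotient $\mb C(\iota_{\mc A,n})$, which reduces to descent across the three identifications defining the mapping cone. Descent across the gluing $(0,\omega)\sim\iota_{\mc A,n}(\omega)$ is immediate since $c(s)=sM'$ at $t=0$ regardless of which side one evaluates. Descent across the apex $(1,\omega)\sim\ast$ is the delicate point: a naive retraction depending only on $\omega$ would send such a representative to $(M',\cdot)$, which is generally not the basepoint. This is why $c$ is defined using $\max(t,M')$ rather than $M'$, giving $c(s)=(1-s)+s\max(1,M')=1$ at $t=1$; the accompanying stretch by $\gamma\geq 1$ then inflates the positions so that their new maximum matches the enlarged cone parameter, keeping the endpoint in the image of $\phi$. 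Descent across $(t,\ast)\sim\ast$ is automatic since labels are carried through unchanged. Continuity at configurations with $M$ approaching $0$ uses the uniform estimate $|\gamma(t_i-t_n)|\leq\gamma M=\max(t,M')$, which forces the stretched term to vanish as $M\to 0$; by the preceding remark $t\to 0$ in this limit as well, so the entire $\gamma$-dependent contribution drops out continuously, matching the $M=0$ formula on the base.
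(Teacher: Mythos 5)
There is a genuine gap: you have misidentified the subspace onto which you are retracting. The relations $t_n=0$ and $t=\max\{|t_1|,\ldots,|t_{n-1}|,0\}$ cut out only an intermediate subspace $A\subseteq\mb C(\iota_{\mc A,n})$ (this is exactly the first stage of the paper's proof); the image of $\phi$ consists of those points of $A$ whose label tuple $(x_1,\ldots,x_n)$ moreover lies in $\hat W_n$, i.e.\ is represented by a point of $W_n$. A configuration in $\mc D_{\mc A}(W_n)$ with its $n$ particles at distinct (or only partially collided) positions carries labels subject to no joint constraint beyond the collided-subset condition, so in general $(x_1,\ldots,x_n)\notin W_n$ -- for instance with $W=(D^1,S^0)^K$ and $K$ two disjoint vertices, two separated particles labelled by interior points of $D^1$ give a point of $A$ not in $\phi(\Sigma\hat W_2)$. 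Since your homotopy $F_s$ carries the labels through unchanged, $F_1$ lands in $A$ but not in $\phi(\Sigma^{n-1}\hat W_n)$, and what you have fixed pointwise is all of $A$, not just the image of $\phi$. The entire second half of the argument is missing: one must additionally deform the label tuple into $W_n$, which the paper does by applying the coordinatewise homotopy $h_t$ attached to a faithful neighbourhood of $W_n$ while simultaneously pushing the cone parameter towards $1$, so that a point whose labels never reach $W_n$ is absorbed into the basepoint, and with a continuity argument (the stopping time depends continuously on the point) that genuinely uses the faithful-neighbourhood hypothesis. The fact that your formula never invokes that hypothesis is the telltale sign that it cannot prove the statement as written.

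A secondary, patchable issue: in your continuity discussion at $M\to 0$ you claim the remark forces $t\to 0$. It does not; the remark only concerns non-degenerate points with exactly coincident positions. A sequence can have $M\to 0$ with $t$ bounded away from $0$, provided its labels degenerate to the basepoint, and continuity there must be argued through the basepoint of the mapping cone (which your formula does handle, since degenerate labels are preserved), not by matching the $M=0$ base formula. But even with that repaired, the argument only produces the paper's first deformation retraction, of $\mb C(\iota_{\mc A,n})$ onto $A$; the retraction of $A$ onto $\Sigma^{n-1}\hat W_n$ still has to be supplied.
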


\begin{proof}
Let $A$ be the subspace of $\mb C(\iota_{\mc A,n})$ consisting of the points $(t,(t_1,\ldots,t_n;x_1,\ldots,x_n))$
such that $t_n=0$ and $t=\max\{|t_1|,\ldots,|t_{n-1}|,0\}\leq 1$, as well as the basepoint. 
Define a deformation retraction of $\mb C(\iota_{\mc A,n})$ onto $A$ by keeping the basepoint fixed, 
otherwise homotoping a point $(t,(t_1,\ldots,t_n;x_1,\ldots,x_n))\in \mb C(\iota_{\mc A,n})$ to a point in $A$, 
first by shifting the $t_i's$, by linearly homotoping each $t_i$ to $\bar t_i=t_i-t_n$ via $t_{i,s}=(1-s)t_i+s\bar t_i$. 
Then, depending on $t$ and $\beta=\max\{|\bar t_1|,\ldots,|\bar t_{n-1}|,0\}$, we do one of three things:
$(i)$ if $t=\beta$, do nothing since we are done; 
$(ii)$ if $t>\beta$, radially expand the $\bar t_i$'s to $\frac{t}{\beta}\bar t_i$;
$(iii)$ if $t<\beta$, linearly homotope $t$ towards $\beta$, stopping either when $t=\beta$, or when $t=1$ (we are at the basepoint). 
Notice $\beta$ cannot be $0$ in case $(ii)$, otherwise the $\bar t_i's$ are $0$, and by our preceding remark $t=0$. 
The homotopy in case $(iii)$ is possible since $\beta>t\geq 0$, 
so our configuration of particles $(\bar t_i,x_i)$ is necessarily in $D_{\mc A}(W_{n-1})$.
Note no new collisions between particles are created during these homotopies, 
and this homotopy is continuous in the neighbourhood of the basepoint.

Next, we define a deformation retraction of $A$ onto $\Sigma^{n-1}\hat W_n$, homotoping a given point 
$\omega=(t,(t_1,\ldots,t_{n-1},0;x_1,\ldots,x_n))$ in $A$ as follows. 
If $\omega$ is the basepoint, we are done, so assume $\omega$ is not the basepoint. 
Then $t<1$ and each $x_i$ is not the basepoint. 
Since we are in $\mb C(\iota_{\mc A,n})$, we may assume without loss of generality that $x_i\in X_i$, 
so $\hat x=(x_1,\ldots,x_n)$ is a point in $\hat X=X_1\wedge\cdots\wedge X_n$,
and we regard the set of labels $x_i$ as the point $\hat x\in \hat X$. 
Since $\hat x$ is not the basepoint, there is a unique $x\in X$ such that $q(x)=\hat x$,
where \seqm{X}{q}{\hat X} is the quotient map.
To start off the homotopy, if $t=0$, we do nothing.
If $t>0$, we homotope $t=\max\{|t_1|,\ldots,|t_{n-1}|,0\}$ linearly towards $1$ via $t_s=(1-s)t+s$, 
and homotope each $t_i$ for $i\leq n-1$ by radial expansion via $t_{i,s}=\frac{t_s}{t}t_i$
(so that $t_s=\max\{|t_{1,s}|,\ldots,|t_{n-1,s}|,0\}$), while at the same time, 
homotoping the set of labels $\hat x$ by homotoping $x$ towards $W_n$ when it is 'near' $W_n$ to obtain $x_t$ at each time $t$, 
meanwhile letting $\hat x_t=q(x_t)$ to obtain our homotopy $\hat x\mapsto \hat x_t$.
We perform this homotopy of $x$ using a faithful neighbourhood of $W_n$ and applying its associated homotopy 
$h_t\colon\seqm{X}{}{X}$ to $x$, letting $x_t=h_t(x)$, exactly as in the proof of Lemma~\ref{LBaseCase}, 
thereby ensuring that labels of collided particles remain represented by a point in $W_n$ during the homotopy of labels $\hat x$. 
Since $h_t$ is defined coordinate-wise as a product of maps \seqm{X_i}{}{X_i} that are basepoint preserving, 
our homotopy for $\hat x$ is continuous for $\hat x$ in the neighbourhood of the basepoint.
The homotopy stops as soon as $x_t$ becomes a point in $W_n$, in which case $\hat x_t\in \hat W_n$,
or else when $t$ becomes $1$, in which case $\omega$ becomes the basepoint. 
Notice there is no issue of continuity, or the homotopy not stopping, when $t=\max\{|t_1|,\ldots,|t_{n-1}|,0\}$ is at or near $0$. 
This is because all particles are collided when $t=0$.
Thus (as a set) $\hat x$ must be represented by a point in $W_n$, and since there are $n$ labels,
$\hat x$ must be a point $\hat W_n$ and $x$ a point in $W_n$, and the homotopy stops immediately. 
At the same time, points $\omega$ approaching some point with $t=0$ must have their label set $\hat x$ approach a point in $\hat W_n$, 
so $x$ approaches a point in $W_n$, and our homotopy of $x$ near $W_n$ towards $W_n$ ensures continuity. 
We let $t_\omega$ be the smallest time at which this homotopy stops for $\omega$. 
For the same reasons as in the second step of the proof of Lemma~\ref{LBaseCase}, $t_\omega$ depends continuously on $\omega\in A$. 
This then defines our second deformation retraction.

\end{proof}

We will want to describe a right homotopy inverse of $\Sigma\phi$ that it is compatible with the scanning map.
To do so we will need to define a version of the scanning map for quotiented configuration spaces.
Fix $\mc C^\partial = \mc C(D^1,\bd D^1;W_n)$, $\mc C^\partial_i = \mc C_i(D^1,\bd D^1;W_n)$, and   
$\mc D^\partial_i=\mc C^\partial_i/\mc C^\partial_{i-1}$.  
Consider the map
$$
\hat\iota\wcolon\seqm{\hat W^1_n}{}{\mc D^\partial_n}
$$ 
given by sending $((t_1,x_1),\ldots,(t_n,x_n))$ to $(t_1,\ldots,t_n;x_1,\ldots,x_n)$.

Since the incusion \seqm{W^1_n}{\iota}{\mc C^\partial} from Lemma~\ref{LBaseCase} includes into the subspace $\mc C^\partial_n$,
we have an inclusion $\iota_n\colon\seqm{W^1_n}{}{\mc C^\partial_n}$. 
In turn, $\iota_n$ restricts to a map $\iota_{n-1}\colon\seqm{W^1_{n-1}}{}{\mc C^\partial_{n-1}}$. 
Notice then our maps fit into a commutative square 
\begin{equation}
\label{DIota}
\diagram
W^1_n\rto^{q}\dto^{\iota_n} & \hat W^1_n\dto^{\hat\iota}\\
\mc C^\partial_n\rto^{p} & \mc D^\partial_n,
\enddiagram
\end{equation}
where $p$ and $q$ are the quotient maps. 
Like $\iota$, both $\iota_n$ and $\hat\iota$ are homeomorphisms onto their images,
and we think of $W^1_n$ and $\hat W^1_n$ as subspaces of $\mc C^\partial_n$ and $\mc D^\partial_n$ under these inclusions.

\begin{lemma}
\label{LBaseCase2}
There is a deformation retraction of $\mc C^\partial_n$ onto the subspace $W^1_n$ given by the inclusion $\iota_n$.
Moreover, it extends to a deformation retraction of $\mc D^\partial_n$ onto the subspace $\hat W^1_n$ given by the inclusion $\hat\iota$. 
Therefore, $\iota_n$ and $\hat\iota$ are homotopy equivalences.
\end{lemma}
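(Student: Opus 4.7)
The plan is to obtain the first deformation retraction as a direct consequence of Lemma~\ref{LBaseCase}, and then show it is compatible with the filtration by particle count so that it descends to the required deformation retraction on the quotient.

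First I would note that $\mc C^\partial = \mc C(D^1,\bd D^1; W_n) = \mc C(D^1,\bd D^1; W)$, so Lemma~\ref{LBaseCase} with $\ell = 1$ supplies a deformation retraction $r\colon\mc C^\partial \to W^1 = W^1_n$ onto the image of $\iota$. Inspecting that construction, the deformation consists of a radial expansion together with a homotopy of labels along mapping-cylinder fibres of the basepoint followed by a homotopy of labels driven by the faithful neighbourhood of $W$. None of these operations produces new non-degenerate particles: radial expansion can only annihilate particles at $\bd D^1$, and both label homotopies move labels toward basepoints. Consequently the retraction restricts to $\mc C^\partial_n$ and lands in $W^1_n$, identifying $\iota_n$ as a deformation retraction inclusion.

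Next I would show this restricted retraction further preserves $\mc C^\partial_{n-1}$. By the simplicial property together with Property~$(A)$, a multiset of at most $n-1$ labels is represented by a point in $W_n$ iff it is represented by a point in $W_{n-1}$; hence $\mc C^\partial_{n-1} = \mc C_{n-1}(D^1,\bd D^1; W_{n-1})$. After the first stage of the retraction, such a configuration has labels in distinct $X_i$'s and assembles into a point $x \in X_I$ for some $|I| \leq n-1$, and by Lemma~\ref{LProjection} the restriction $h_{I,t}$ of the faithful-neighbourhood homotopy preserves $W_I \subseteq W_{n-1}$. Since $h_t$ is defined coordinate-wise and basepoint-preserving, labels never migrate into new summands, so the retraction carries $\mc C^\partial_{n-1}$ into $W^1_{n-1}$. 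This compatibility step is where I expect the main delicacy to lie, since one needs the retraction as defined for the larger space $\mc C^\partial_n$ to match the subspace behaviour that would be predicted by applying Lemma~\ref{LBaseCase} to $W_{n-1}$ directly (which, by assumption, also has a faithful neighbourhood).

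Finally, passing to the quotient by the pair of inclusions $\mc C^\partial_{n-1}\subseteq\mc C^\partial_n$ and $W^1_{n-1}\subseteq W^1_n$, the restricted deformation retraction descends to a deformation retraction of $\mc D^\partial_n$ onto $W^1_n/W^1_{n-1} = \hat W^1_n$. Using diagram~(\ref{DIota}) together with the defining formula for $\hat\iota$, one sees that the inclusion of $\hat W^1_n$ as the retract of $\mc D^\partial_n$ is exactly $\hat\iota$. Since a deformation retraction inclusion is a homotopy equivalence, both $\iota_n$ and $\hat\iota$ are homotopy equivalences, completing the proof.
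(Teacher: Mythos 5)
Your proposal is correct and follows essentially the same route as the paper: the paper's proof likewise observes that the retraction of Lemma~\ref{LBaseCase} keeps boundary particles at the boundary and basepoint labels at the basepoint, so it restricts to deformation retractions of $\mc C^\partial_n$ onto $W^1_n$ and of $\mc C^\partial_{n-1}$ onto $W^1_{n-1}$, and then descends to the quotient $\mc D^\partial_n$ with retract $\hat W^1_n$ via diagram~(\ref{DIota}). (Only a cosmetic slip: the second label homotopy pushes labels toward $W_n$ rather than toward basepoints, but since it is coordinate-wise and basepoint-preserving your key conclusion --- degenerate particles stay degenerate --- still holds, exactly as in the paper.)
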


\begin{proof}
Recall that the deformation retraction $H\colon\seqm{\mc C^\partial\times [0,1]}{}{\mc C^\partial}$ 
of $\mc C^\partial$ onto $W^1_n$ in the proof of Lemma~\ref{LBaseCase}. This is defined via a two-step radial expansion.
A configuration $\omega$ is homotoped by radial expanding particles towards the boundary,
meanwhile, in the first step, labels near the basepoint are homotoped towards it until there are no more than $n$ particles left, 
and in the second step, the labels of the remaining particles are homotoped towards $W_n$ when they are near $W_n$, 
this being done via a coordinate-wise homotopy $h\colon\seqm{X\times [0,1]}{}{X}$ that is basepoint preserving on each coordinate.
From this, we see that for any $\omega=(t_1,\ldots,t_m;x_1,\ldots x_m)$ and each $t\in [0,1]$, $H$ satisfies 
$H_t(\omega)=(t'_1,\ldots,t'_m;x'_1,\ldots x'_m)$ such that $t_i\in\bd D^1\Rightarrow t'_i\in\bd D^1$ and $x_i=\ast\Rightarrow x'_i=\ast$.
Degenerate particles thereby remain degenerate during this homotopy.
Thus $H$ restricts to a deformation retraction $H^i$ of $\mc C^\partial_i$ onto $W^1_n$ when $i\geq n$,
and a deformation retraction $\bar H^i$ of $\mc C^\partial_i$ onto $W^1_i$ when $i\leq n-1$. 
Since $H^n\colon\seqm{\mc C^\partial_n\times [0,1]}{}{\mc C^\partial_n}$ restricts to 
$\bar H^{n-1}\colon \colon\seqm{\mc C^\partial_{n-1}\times [0,1]}{}{\mc C^\partial_{n-1}}$, 
there is a well-defined extension $\hat H$ of $H^n$, as in the composite  
$$
\hat H'\colon
\seqmm{\mc D^\partial_n\times [0,1]}{}{(\mc C^\partial_n\times [0,1])/(\mc C^\partial_{n-1}\times [0,1])}{\hat H}{\mc D^\partial_n},
$$ 
with the first map being the quotient map collapsing $\ast\times [0,1]$ onto the basepoint.
Then $\hat H'$ is a deformation retraction of $\mc D^\partial_n$ onto $\hat W^1_n$.

%by defining $\hat H_t(\omega')=p\circ H_t\circ p^{-1}(\omega')$ when $\omega'\neq\ast$, 
%in other words when $x\neq\ast$ and $t\nin\bd D^1$ for each $(t,x)\in\omega'$ (note $p^{-1}(\omega')$ in such case is well-defined),
%otherwise we define $\hat H_t(\omega')=\ast$ when $\omega=\ast$.

\end{proof}

The quotiented scanning map
$$
\hat\gamma\wcolon\seqm{\mc D_n(W_n)}{}{\Omega\hat W^1_n} 
$$
is defined as follows. Let $\mc D_n(M,N;W_n)=\mc C_n(M,N;W_n)/\mc C_{n-1}(M,N;W_n)$.
Fix any $\varepsilon>0$ (this will be the radius of our scanning interval).
For any configuration $\omega'\in\mc D_n(W_n)$, we define $\hat\gamma(\omega')\colon\seqm{\mb R}{}{\hat W^1_n}$ as the map 
given for any $z\in\mb R$ as the image of $\omega'$ under the composite
$$
\seqmmm{\mc D_n(W_n)}{}{\mc D_n(D_{\varepsilon,z},\bd D_{\varepsilon,z};W_n)}{\cong}{\mc D^\partial_n}{\hat r}{\hat W^1_n}
$$
where $D_{\varepsilon,z}=[z-\varepsilon,z+\varepsilon]\subseteq\mb R$,
and where the first map forgets those particles from $\omega'$ that are in $\mb R-D_{\varepsilon,z}$ 
(thus maps to the basepoint if at least one particle is in $\mb R-D_{\varepsilon,z}$),
the second homeomorphism is induced by the homeomorphism $D_{\varepsilon,z}\cong D^1$, 
and $\hat r$ is the homotopy inverse of $\hat\iota$ taken as the retraction from Lemma~\ref{LBaseCase2}. 
Recall that the regular scanning map $\gamma$ is defined similarly. 
In the case of $\mc C(W_n)$, the restriction of \seqm{\mc C(W_n)}{\gamma}{\Omega W^1_n} to $\mc C_n(W_n)$ 
is defined by sending $\omega\in\mc C_n(W_n)$ to the map $\gamma(\omega)\colon\seqm{\mb R}{}{W^1_n}$,
given for any $z\in\mb R$ as the image of $\omega$ under
$$
\seqmmm{\mc C_n(W_n)}{}{\mc C_n(D_{\varepsilon,z},\bd D_{\varepsilon,z};W_n)}{\cong}{\mc C^\partial_n}{r}{W^1_n}
$$
where the first map forgets particles in $\mb R-D_{\varepsilon,z}$ as before, 
and (since the deformation retraction $H$ in the proof of Lemma~\ref{LBaseCase2} restricts as it does)
$r$ is the homotopy inverse of $\iota_n$ taken as the retraction from Lemma~\ref{LBaseCase2}. 
Replacing the homotopy equivalences $\iota_n$ and $\hat\iota$ in Diagram~\eqref{DIota} with their homotopy inverses $r$ and $\hat r$ 
gives a homotopy commutative diagram, so we obtain a homotopy $q\circ r\simeq \hat r\circ p$. 
Combining this homotopy with the commutative diagram 
\[\diagram
\mc C_n(W_n)\rto^{}\dto^{p'} & \mc C_n(D_{\varepsilon,z},\bd D_{\varepsilon,z};W_n)\rto^-{\cong} & \mc C^\partial_n\dto^{p}\\
\mc D_n(W_n)\rto^{} & \mc D_n(D_{\varepsilon,z},\bd D_{\varepsilon,z};W_n)\rto^-{\cong} & \mc D^\partial_n,
\enddiagram\]
where the vertical maps are the quotient maps, gives a homotopy of $\hat\gamma(p'(\omega))\simeq q(\gamma(\omega))$
that is independent of $\omega$ and $z$. Therefore: 
\begin{lemma}
\label{LScanningNatural2}
There is a homotopy commutative square
\[\diagram
\mc C_n(W_n)\rto^{}\dto^{\gamma} & \mc D_n(W_n)\dto^{\hat\gamma}\\
\Omega W^1_n \rto^{\Omega q} & \Omega\hat W^1_n. 
\enddiagram\]~$\qqed$
\end{lemma}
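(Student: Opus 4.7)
The plan is to deduce Lemma~\ref{LScanningNatural2} by exploiting two pieces of compatibility: the one between $r$, $\hat r$, $p$, and $q$ coming from Lemma~\ref{LBaseCase2} and Diagram~\eqref{DIota}, and the strict naturality of the first two steps of the scanning construction with respect to the relevant quotient maps. The key observation is that both $\gamma$ and $\hat\gamma$ are defined by the same three-step recipe — forget particles outside a small interval $D_{\varepsilon,z}$, identify $D_{\varepsilon,z}$ with $D^1$, apply a retraction — with the only difference being the retraction used in the third step.

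First I would note that the deformation retraction $\hat H'$ constructed in Lemma~\ref{LBaseCase2} is by construction obtained from $H^n$ by descending to the quotient $\mc D^\partial_n=\mc C^\partial_n/\mc C^\partial_{n-1}$. Replacing $\iota_n$ and $\hat\iota$ in Diagram~\eqref{DIota} by their retraction inverses $r$ and $\hat r$ therefore turns the strictly commutative square into a homotopy commutative one, with a canonical homotopy $q\circ r\simeq\hat r\circ p$ that depends on no auxiliary parameter. Second, I would paste this homotopy with the strictly commutative square expressing the naturality of the forget-and-identify steps with respect to the quotient maps $p$ and $p'$ (namely, that quotienting by $\mc C_{n-1}$ commutes with forgetting particles outside $D_{\varepsilon,z}$ and with the homeomorphism $D_{\varepsilon,z}\cong D^1$). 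For each fixed $z\in D^1$ and each $\omega\in\mc C_n(W_n)$, this pasting gives a homotopy between $q(\gamma(\omega)(z))$ and $\hat\gamma(p'(\omega))(z)$.

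The main obstacle — though mild — is the parameter-independence step. Because the homotopy $q\circ r\simeq\hat r\circ p$ lives between fixed maps of fixed spaces and is independent of $z$ and $\omega$, the pointwise homotopies above assemble continuously into a single homotopy of maps \seqm{\mc C_n(W_n)\times D^1}{}{\hat W^1_n}, equivalently a homotopy of maps \seqm{\mc C_n(W_n)}{}{\Omega\hat W^1_n}. This is the required homotopy $\hat\gamma\circ p'\simeq\Omega q\circ\gamma$. Once the coherent construction of $\hat r$ out of $r$ in Lemma~\ref{LBaseCase2} is in hand, the remainder of the argument is pure naturality, entirely parallel to Lemma~\ref{LScanningNatural}.
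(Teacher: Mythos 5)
Your argument is correct and is essentially the paper's own proof: the paper likewise replaces $\iota_n$ and $\hat\iota$ in Diagram~\eqref{DIota} by the retractions $r$ and $\hat r$ from Lemma~\ref{LBaseCase2} to get a homotopy $q\circ r\simeq\hat r\circ p$, pastes it with the strictly commutative square for the forget-and-rescale steps and the quotients $p'$, $p$, and uses independence of the homotopy from $z$ and $\omega$ to assemble the required homotopy $\hat\gamma\circ p'\simeq\Omega q\circ\gamma$. Your observation that $\hat H'$ is induced from $H^n$ on the quotient is exactly the reason this homotopy is canonical, so no further justification is needed.
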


Let 
$$
\hat ev\colon\seqm{\Sigma\mc D_n(W_n)}{}{\hat W_n^1}
$$ 
be the adjoint of \seqm{\mc D_n(W_n)}{\hat\gamma}{\Omega\hat W_n^1}. 
The composite \seqmm{\Sigma\mc D_n(W_{n-1})}{\Sigma\iota_{n,n}}{\Sigma\mc D_n(W_n)}{\hat ev}{\hat W_n^1} is nullhomotopic as follows.
Since $\hat\gamma$ is given by scanning an $\varepsilon$-neighbourhood (for some fixed $\varepsilon>0$) 
through a configuration $\omega$ of labelled particles in $\mb R$, 
while applying a radial expansion to particles in this neighbourhood to obtain a point along a based loop in $\hat W_n^1$ 
(as defined in the proof of Lemmas~\ref{LBaseCase} and~\ref{LBaseCase2}). 
If no $\varepsilon$-neighbourhood contains all $n$ particles, then every point in the loop is the basepoint in $\hat W_n^1$, 
in which case $\hat{ev}$ will map $(t,\omega)$ to the basepoint in $\hat W_n^1$ for all $t\in[-1,1]$.
Then since no more than $n-1$ particles in a configuration $\omega\in\mc D_n(W_{n-1})$ can collide at a point, 
we can homotope $\hat{ev}\circ\Sigma\iota_{n,n}(t,\omega)$ to the basepoint by homotoping $(t,\omega)$ via radial expansion 
of the $n$ particles in $\omega$ away from $0\in\mb R$,
until no $\varepsilon$-neighbourhood in $\mb R$ contains them all (alternatively, one can scan ever smaller neighbourhoods).
This defines a nullhomotopy of $\hat{ev}\circ\Sigma\iota_{n,n}$, and in turn, this nullhomotopy defines an extension
$$
\rho\wcolon\seqm{\Sigma\mb C(\iota_{n,n})}{}{\hat W^1_n}
$$
of $\hat{ev}$. Take the composite
$$
\rho_{\mc A}\wcolon\seqmm{\Sigma\mb C(\iota_{\mc A,n})}{}{\Sigma \mb C(\iota_{n,n})}{\rho}{\hat W^1_n}
$$
where the first map is the inclusion. This is not the same map as the suspension of the retraction map onto $\hat W^1_n\cong\Sigma^n W_n$ 
in Lemma~\ref{LCofib}, but it will be of use to us since it fits naturally with the maps $ev$ and $\hat{ev}$.

\begin{corollary}
\label{CHomotopic}
The composite $\rho_{\mc A}\circ\Sigma\phi$ is a homotopy equivalence. Thus, $\rho_{\mc A}$ is a homotopy equivalence.
\end{corollary}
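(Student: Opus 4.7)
The plan is to compute $\rho_{\mc A}\circ\Sigma\phi$ explicitly as a self-map of $\Sigma^n\hat W_n$ (using $\hat W^1_n\cong\Sigma^n\hat W_n$), recognise it as $f\wedge\mathrm{id}_{\hat W_n}$ for a self-map $f$ of $S^n=D^n/\partial D^n$, and show $\deg(f)=\pm 1$. Since $\Sigma\phi$ is a homotopy equivalence by Lemma~\ref{LCofib}, this suffices: once $\rho_{\mc A}\circ\Sigma\phi$ is a homotopy equivalence, so is $\rho_{\mc A}$.

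First I would trace a point $((s,t_1,\ldots,t_{n-1}),(x_1,\ldots,x_n))\in\Sigma^n\hat W_n$ through $\Sigma\phi$, landing in $\Sigma\mb C(\iota_{\mc A,n})$ at cone-height $\tau=\max\{|t_1|,\ldots,|t_{n-1}|,0\}$ over the configuration $\omega'=(t_1,\ldots,t_{n-1},0;x_1,\ldots,x_n)$, and then apply $\rho$. Recall that $\rho$ equals $\hat{ev}$ on the base $\Sigma\mc D_n(W_n)$ and extends over the cone by the nullhomotopy of $\hat{ev}\circ\Sigma\iota_{n,n}$ from the paragraph preceding the corollary. Using the shrinking-window version of this nullhomotopy, at cone height $\tau$ one scans with radius $\varepsilon(1-\tau)$ rather than $\varepsilon$. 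Tracking through the definition of $\hat\gamma$ (scan, rescale window to $D^1$, apply the retraction $\hat r$ of Lemma~\ref{LBaseCase2}) and the identification $\hat W^1_n\cong\Sigma^n\hat W_n$, the composite lands at $((T_1,\ldots,T_n),(x_1,\ldots,x_n))$, where (taking $\varepsilon=1$) $T_i=(t_i-s)/(1-\tau)$ for $i<n$ and $T_n=-s/(1-\tau)$, understood as the basepoint whenever any $|T_j|>1$.

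This formula is manifestly of the form $f\wedge\mathrm{id}_{\hat W_n}$, with $f\colon D^n/\partial D^n\to D^n/\partial D^n$ given by the stated coordinate transformation. To see $\deg(f)=\pm 1$, I would choose the regular value $(1/2,0,\ldots,0)$, exhibit its unique preimage $(0,1/3,0,\ldots,0)$, and verify by direct Jacobian computation in the open neighbourhood where $\tau=t_1$ that the determinant there is nonzero of sign $(-1)^n$. More conceptually, $f$ equals the linear automorphism $g(s,t_1,\ldots,t_{n-1})=(t_1-s,\ldots,t_{n-1}-s,-s)$ with $\det g=(-1)^n$ post-composed with radial rescaling by $1/(1-\tau)$, and a straight-line interpolation shows $f\simeq g$ as self-maps of $S^n$.

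The main obstacle is the bookkeeping: pinning down precisely which nullhomotopy is used to build $\rho$ (the construction admits either radial expansion of particles or shrinking scanning windows) and verifying that the explicit formula behaves continuously as $\tau\to 1$ or as any label tends to the basepoint, so that $f\wedge\mathrm{id}$ really descends to a well-defined map of the quotient spaces. Once this and the degree calculation are in place, $f\wedge\mathrm{id}_{\hat W_n}$ is the smash of a degree $\pm 1$ self-map of $S^n$ with the identity on the connected CW-complex $\hat W_n$, hence a homotopy equivalence; combined with $\Sigma\phi$ being a homotopy equivalence by Lemma~\ref{LCofib}, the corollary follows.
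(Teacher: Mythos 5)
Your proposal rests on the same key observation that drives the paper's proof: because the labels $(x_1,\ldots,x_n)$ of the scanned configuration come from a point of $W_n$, every scanned configuration already lies in the image of $\hat\iota$, where the retraction of Lemma~\ref{LBaseCase2} is the identity, so $\rho_{\mc A}\circ\Sigma\phi$ leaves the label coordinates untouched and only rearranges the suspension parameters. Where you diverge is in how you finish. The paper never writes down the coordinate transformation: it only records two qualitative boundary-compatibility properties of the new parameters $\bar t_i$ (that $\bar t_n$ follows $t_n$ into the boundary, and that for $i\leq n-1$ either $\bar t_i$ follows $t_i$ or $t_n$ is forced to the opposite end), and these suffice to make the straight-line homotopy $t_{i,s}=(1-s)\bar t_i+st_i$ continuous, proving the stronger statement that $\rho_{\mc A}\circ\Sigma\phi$ is homotopic to the identity. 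You instead compute an explicit formula, factor the composite as $f\wedge\mathrm{id}_{\hat W_n}$, and invoke a degree-$\pm1$ argument; this yields the (sufficient) weaker conclusion that the composite is a homotopy equivalence. Both are valid, but be aware that your explicit formula is the fragile part: it presupposes the alternative shrinking-window nullhomotopy (the paper's default $\rho$ is built from radial expansion of particles, with window-shrinking only mentioned parenthetically), a specific choice of $\varepsilon$, and an affine treatment of the identification $\mb R\cong\mathrm{int}\,D^1$ via $t\mapsto t/(1+|t|)$ and of the adjoint's loop parameter. With the paper's actual conventions the literal linear formula changes, and your degree computation would have to be redone (the structural conclusion -- label-preserving, boundary-compatible, degree $\pm1$ in the sphere coordinates -- does persist, since the discrepancies are reparametrizations). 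The paper's qualitative boundary-behaviour argument is designed precisely to avoid this bookkeeping, which is why it can conclude "homotopic to the identity" without ever pinning the formula down; if the convention-chasing in your route becomes onerous, you can fall back on that argument once you have established, as you do, that the labels are unchanged.
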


\begin{proof}
Take an element $\omega=((t_1,\ldots,t_n),(x_1,\ldots,x_n))\in\hat W^1_n$. 
Since $x=(x_1,\ldots,x_n)$ is in $W_n$, any subset of $\{x_1,\ldots,x_n\}$ is represented the point $x$ in $W_n$.
Then since the homeomorphic image of the inclusion \seqm{\hat W_n}{\hat\iota}{\mc D^\partial_n} in Lemma~\ref{LBaseCase2}
is the subspace of configurations whose particles have labels represented by a point in $W_n$,
and since $\hat\gamma$ is defined in terms of a deformation retraction of $\mc D^\partial_n$ onto this image, 
then the mapping $\omega\mapsto\rho_{\mc A}\circ\Sigma\phi(\omega)$ does not change the labels $x_i$ of $\omega$, that is,
$\rho_{\mc A}\circ\Sigma\phi(\omega)=((\bar t_1,\ldots,\bar t_n),(x_1,\ldots,x_n))$ for some $\bar t_i$. 
Moreover, one can check directly from the construction of $\rho_{\mc A}$ and $\phi(\omega)$ 
that each $\bar t_i$ satisfies the following for each $c=\pm 1$ as $(t_1,\ldots,t_n)$ approaches the boundary of $D^n=[-1,1]^{\times n}$:
$(1)$ $\bar t_n$ approaches $c$ as $t_n$ approaches $c$ (since the scanning maps scans a neighbourhood from right to left in $\mb R$);
and $(2)$ for $i\leq n-1$, if $t_i$ approaches $c$, then either $\bar t_i$ approaches $c$, or else $t_n$ approaches $-c$ (as does $\bar t_n$). 
So if we define a linear homotopy from $\bar t_i$ to $t_i$ via $t_{i,s}=(1-s)\bar t_i+s t_i$, 
then $(1)$ and $(2)$ hold with $t_{i,s}$ in place of $\bar t_i$,  
and we see that $(t_{1,s},\ldots,t_{n,s})$ approaches the boundary of $D^n$ for each $s$ whenever $(t_1,\ldots,t_n)$ does. 
There are therefore no issues of continuity in defining a homotopy from $\rho_{\mc A}\circ\Sigma\phi$ to the identity
by mapping $\omega$ to $((t_{1,s},\ldots,t_{n,s}),(x_1,\ldots,x_n))$ at each time $s$.  
Then using Lemma~\ref{LCofib}, $\rho_{\mc A}$ is a homotopy equivalence since both $\rho_{\mc A}\circ\Sigma\phi$ and $\Sigma\phi$ are.
\end{proof}

\begin{lemma}
\label{LExtend}
Suppose we are given a homotopy commutative diagram of homotopy cofibration sequences 
\[\diagram
Z\rto^{\partial} & X\rto^{f}\dto^{\bar g} & Y\rto^{}\dto^{g} & \Sigma Z\dto^{\simeq}_{\hat g}\\
& A\rto^{h} & B\rto^{} & C,
\enddiagram\]
where the homotopy equivalence $\hat g$ is an induced map of homotopy cofibers, 
and all spaces are homotopy equivalent to simply connected $CW$-complexes.
Then the bottom homotopy cofibration sequence extends to the left to a homotopy cofibration sequence
$$
\seqmm{Z}{\bar g\circ\partial}{A}{h}{B}.
$$ 
\end{lemma}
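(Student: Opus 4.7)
The plan is to identify $B$ with the mapping cone of $\bar g\circ\partial$, after which the desired extension of the bottom cofibration sequence to the left follows formally. The key input is the classical fact that a homotopy commutative square of pointed spaces is a homotopy pushout if and only if the induced map between the homotopy cofibers of its two horizontal (equivalently, vertical) maps is a homotopy equivalence.

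First I would observe that the middle square $\bar g\colon X\to A$, $f\colon X\to Y$, $h\colon A\to B$, $g\colon Y\to B$ of the given diagram has as the homotopy cofibers of its horizontal arrows $f$ and $h$ precisely $\Sigma Z$ and $C$ (from the two given cofibration sequences), and the induced map between these cofibers is exactly the homotopy equivalence $\hat g$. By the fact cited above, the middle square is therefore a homotopy pushout, so $B$ is homotopy equivalent to the double mapping cylinder $Y\cup_X A$, with $h$ corresponding to the right-hand canonical inclusion.

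Second, since $Y$ is the homotopy cofiber of $\partial\colon Z\to X$, i.e.\ $Y\simeq X\cup_Z CZ$, pasting homotopy pushouts yields
$$
B\;\simeq\;Y\cup_X A\;\simeq\;(X\cup_Z CZ)\cup_X A\;\simeq\;A\cup_Z CZ,
$$
which is a model for the mapping cone of $\bar g\circ\partial\colon Z\to A$, with $h$ corresponding (up to homotopy) to the canonical inclusion of $A$ into this cone. This shows $\seqmm{Z}{\bar g\circ\partial}{A}{h}{B}$ is a homotopy cofibration sequence, as required.

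For a proof that stays within the explicit CW framework, I would alternatively construct the comparison map $A\cup_Z CZ\to B$ directly: on $A$ it is $h$, and on the cone $CZ$ it is obtained by combining the given homotopy $h\circ\bar g\simeq g\circ f$ with the canonical null-homotopy of $f\circ\partial$ coming from the top cofibration sequence. A five lemma argument applied to the resulting map of cofibration sequences (comparing the defining sequence of the mapping cone to the bottom row of the given diagram, using $\hat g$ as the equivalence on the right-most term) shows the comparison map induces isomorphisms on all homology groups, and simple connectivity together with Whitehead's theorem then upgrades this to a homotopy equivalence. The \emph{main obstacle} is purely bookkeeping: one must coherently combine the homotopy $h\circ\bar g\simeq g\circ f$ with the null-homotopy of $f\circ\partial$ so as to obtain a well-defined extension over $CZ$, and verify that the induced map on the cofiber $\Sigma Z$ is homotopic to $\hat g$; once the comparison map is correctly set up, the remaining steps are standard.
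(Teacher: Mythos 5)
Your proposal is correct and follows essentially the same route as the paper: both hinge on recognizing the middle square as a homotopy pushout because the induced map of horizontal cofibers $\hat g$ is an equivalence, and then identifying $B$ with the mapping cone of $\bar g\circ\partial$ compatibly with $h$. The only cosmetic difference is that you paste pushouts ($B\simeq Y\cup_X A\simeq A\cup_Z CZ$) while the paper compares the given pushout square with the one built from the cofiber of $\bar g\circ\partial$; these are the same argument in substance.
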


\begin{proof}
Since the induced map $\hat g$ is a homotopy equivalence, the left-most square $\mc Q$ in the above diagram is a homotopy pushout,
that is, $\mc Q$ is equivalent in the homotopy category to a pushout square whose horizontal maps are cofibrations
(the dual of Proposition $7.6.1$ in~\cite{MR1450595}). 
Likewise, if we take $B'$ to be the homotopy cofibre of $\bar g\circ\partial$,
we obtain a homotopy commutative diagram of homotopy cofibrations 
\[\diagram
Z\rto^{\partial}\ddouble & X\rto^{f}\dto^{\bar g} & Y\rto^{}\dto^{g'} & \Sigma Z\ddouble\\
Z\rto^{\bar g\circ\partial} & A\rto^{h'} & B'\rto^{} & \Sigma Z
\enddiagram\]
where the last two vertical maps are iteratively induced maps of homotopy cofibers, 
and so we see the middle square $\mc Q'$ here is a homotopy pushout as well. 
We obtain a map of homotopy pushouts \seqm{\mc Q}{q}{\mc Q'}, and thus a map \seqm{B'}{p}{B}, 
by mapping $X$, $Y$, and $A$ to themselves via the identity, and since these three are homotopy equivalences, so is $p$. 
From the cube diagram given by $q$ we see that $p\circ h'$ is homotopic to $h$, and we are done.

\end{proof}

\begin{proof}[Proof of Proposition~\ref{PCofibDiagram}]

Since the quotient map $\seqm{\mc C_i(W_n)}{}{\mc D_i(W_n)}$ restricts to the quotient map
$\seqm{\mc C_i(W_k)}{}{\mc D_i(W_k)}$ for $k<n$, by definition of labels in the image of  
$\seqm{\mc C_n(W_n)}{\zeta}{C(\mb R;V_\infty(W_n))}$ in terms of~\eqref{EQuotientlabel} in
Section~\ref{SSplitting}, it follows that $\zeta$ restricts to 
$\seqm{\mc C_n(W_{n-1})}{\zeta}{C(\mb R;V_\infty(W_{n-1}))}$. 
Then composing $\zeta$ and its restriction with the corresponding scanning maps, 
using naturality of the scanning map, and taking the adjoints $\zeta'$ of their composites 
(the homotopy equivalences $\zeta'$ from~\eqref{DCofibSplitting}), we obtain the left commutative 
square in the diagram of homotopy cofibration sequences
\[\diagram
\Sigma\mc C_n(W_{n-1})\rto^{\Sigma\bar\iota_{n,n}}\dto^{\zeta'} & \Sigma\mc C_n(W_n)\rto^{}\dto^{\zeta'} &
\Sigma \mb C(\bar\iota_{n,n})\dto^{\hat\zeta'}\\
\Sigma V_n(W_{n-1})\rto^{\bigvee_i\Sigma\iota_{i,n}} & \Sigma V_n(W_n)\rto^{} & \cvee{1\leq i\leq n}{}{\Sigma \mb C(\iota_{i,n})}. 
\enddiagram\]
Here, the bottom sequence is the wedge sum of homotopy cofibration sequences 
\seqmm{\Sigma\mc D_i(W_{n-1})}{\Sigma\iota_{i,n}}{\Sigma\mc D_i(W_n)}{}{\Sigma\mb C(\iota_{i,n})} 
for $1\leq i\leq n$, thus giving the right vertical extension $\hat\zeta'$.
Since the left and middle vertical maps are homotopy equivalences, so is $\hat\zeta'$.
But since $\iota_{m,k}$ is a homeomorphism when $m<k$ by Lemma~\ref{LHomeomorphism}, the only non-contractible homotopy cofibre
$\Sigma \mb C(\iota_{i,n})$ between $1\leq i\leq n$ is $\Sigma \mb C(\iota_{n,n})$,
implying that the composite 
$$
\hat\nu\wcolon\seqmm{\Sigma \mb C(\iota_{n,n})}{include}{\cvee{1\leq i\leq n}{}
{\Sigma \mb C(\iota_{i,n})}}{(\hat\zeta')^{-1}}{\Sigma \mb C(\bar\iota_{n,n})}
$$ 
is a homotopy equivalence, where $(\hat\zeta')^{-1}$ is a homotopy inverse of $\hat\zeta'$. 
Replacing the vertical homotopy equivalences $\zeta'$, $\hat\zeta'$ with their inverses, 
and restricting to the homotopy cofibration sequence   
\seqmm{\Sigma\mc D_n(W_{n-1})}{\Sigma\iota_{n,n}}{\Sigma\mc D_n(W_n)}{}{\Sigma \mb C(\iota_{n,n})},
gives a homotopy commutative diagram of homotopy cofibrations
\begin{equation}
\label{DTop}
\diagram
\Sigma\mc D_n(W_{n-1})\rto^{\Sigma\iota_{n,n}}\dto^{\nu} & \Sigma\mc D_n(W_n)\rto^{}\dto^{\nu} & 
\Sigma \mb C(\iota_{n,n})\dto^{\hat\nu}_\simeq\\
\Sigma\mc C_n(W_{n-1})\rto^{\Sigma\bar\iota_{n,n}} & \Sigma\mc C_n(W_n)\rto^{} & \Sigma \mb C(\bar\iota_{n,n}).
\enddiagram
\end{equation}
In turn, the top homotopy cofibration sequence splits as the wedge sum of homotopy cofibrations sequences
$$
\seqmm{\Sigma\mc D_{\mc S}(W_{n-1})}{\Sigma\iota_{\mc S,n}}{\Sigma\mc D_{\mc S}(W_n)}{}{\Sigma \mb C(\iota_{\mc S,n})}
$$
over all $(n,n$)-partitions $\mc S$,
and since $\iota_{\mc S,k}$ is a homeomorphism when at most $k-1$ elements in an $(m,n)$-partition $\mc S$ are non-zero,
the only non-contractible homotopy cofibre in this splitting is $\Sigma \mb C(\iota_{\mc A,n})$,
so the inclusion 
$$
\seqm{\Sigma \mb C(\iota_{\mc A,n})}{}{\Sigma \mb C(\iota_{n,n})}
$$ 
is a homotopy equivalence. Restricting to the sequence $\mc S=\mc A$, 
we obtain the top homotopy commutative diagram of homotopy cofibrations in the statement of the proposition.  

As for the bottom diagram of homotopy cofibrations, 
note by adjointing the square in Lemma~\ref{LScanningNatural2}, we have a commutative diagram
\[\diagram
\Sigma\mc C_n(W_n)\rto^{}\dto^{ev} & 
\Sigma\mc D_n(W_n)\rto^{}\dto^{\hat{ev}} &
\Sigma \mb C(\iota_{n,n})\dlto^{\rho}\\
W^1_n\rto^{} & \hat W^1_n.
\enddiagram\]
The top left horizontal quotient map fits into a commutative diagram of homotopy cofibrations
\[\diagram
\Sigma\mc C_n(W_{n-1})\rto^{\Sigma\bar\iota_{n,n}}\dto^{} & \Sigma\mc C_n(W_n)\rto^{}\dto^{} & 
\Sigma\mb C(\bar\iota_{n,n})\dto^{\hat q}\\
\Sigma\mc D_n(W_{n-1})\rto^{\Sigma\iota_{n,n}} & \Sigma\mc D_n(W_n)\rto^{} & 
\Sigma\mb C(\iota_{n,n})
\enddiagram\]
induced by the left square, where the vertical maps are the quotient maps. 
Combining the right square with the first diagram, we obtain a commutative diagram of homotopy cofibration sequences
\begin{equation}
\label{DBottom}
\diagram
\Sigma\mc C_n(W_{n-1})\rto^{\Sigma\bar\iota_{n,n}}\dto^{ev} & 
\Sigma\mc C_n(W_n)\rto^{}\dto^{ev} & 
\Sigma\mb C(\bar\iota_{n,n})\dto^{\bar\rho}\\
W^1_{n-1}\rto^{} & W^1_n\rto^{} & \hat W^1_n,
\enddiagram
\end{equation}
where $\bar\rho$ is the composite 
$$
\bar\rho\wcolon\seqmm{\Sigma \mb C(\bar\iota_{n,n})}{\hat q}{\Sigma \mb C(\iota_{n,n})}{\rho}{\hat W^1_n}.
$$
To see that $\bar\rho$ is a homotopy equivalence, note by Corollary~\ref{CHomotopic} the composite
$$
\rho_{\mc A}\colon\seqmm{\Sigma \mb C(\iota_{\mc A,n})}{\simeq}{\Sigma \mb C(\iota_{n,n})}{\rho}{\hat W^1_n}
$$
is a homotopy equivalence. Since we saw above that the first inclusion is a homotopy equivalence, 
then so is $\rho$.
Now the composite of \seqm{\Sigma\mc C_n(W_n)}{\zeta'}{\Sigma V_n(W_n)} 
with the quotient map onto the summand $\Sigma\mc D_n(W_n)$ is the left homotopy inverse 
of the map $\nu$ in diagram~(\ref{DTop}), 
and by the right-hand commutative square in diagram~(\ref{DCofibSplitting}) in Section~\ref{SSplitting}, 
this composite is equal to the composite 
$$
\seqmm{\Sigma\mc C_n(W_n)}{q}{\Sigma\mc D_n(W_n)}{\simeq}{\Sigma\mc D_n(W_n)}
$$
of $q$ and some self homotopy equivalence. 
Thus, $q$ is also a left homotopy inverse of $\nu$, and similarly for $W_{n-1}$ in place of $W_n$. 
Then composing (\ref{DTop}) with the commutative diagram
\[\diagram
\Sigma\mc C_n(W_{n-1})\rto^{\Sigma\bar\iota_{n,n}}\dto^{q} & \Sigma\mc C_n(W_n)\rto^{}\dto^{q} & 
\Sigma \mb C(\bar\iota_{n,n})\dto^{\hat q}\\
\Sigma\mc D_n(W_{n-1})\rto^{\Sigma\iota_{n,n}} & \Sigma\mc D_n(W_n)\rto^{} & \Sigma \mb C(\iota_{n,n})
\enddiagram\]
gives a commutative diagram of homotopy cofibrations
\[\diagram
\Sigma\mc D_n(W_{n-1})\rto^{\Sigma\bar\iota_{n,n}}\dto^{q\circ\nu} & \Sigma\mc D_n(W_n)\rto^{}\dto^{q\circ\nu} & 
\Sigma \mb C(\iota_{n,n})\dto^{\hat q\circ\hat\nu}\\
\Sigma\mc D_n(W_{n-1})\rto^{\Sigma\iota_{n,n}} & \Sigma\mc D_n(W_n)\rto^{} & \Sigma \mb C(\iota_{n,n}).
\enddiagram\]
Since both composites $q\circ\nu$ are homotopic to the identity, 
$\hat q\circ\hat\nu$ is a homotopy equivalence. But $\hat\nu$ is a homotopy equivalence, 
so $\hat q$ is homotopy equivalence, and therefore $\bar\rho=\rho\circ\hat q$ is a homotopy equivalence since $\rho$ is.
Thus, we have obtain the right-side of the bottom homotopy commutative diagram of homotopy cofibrations in the 
statement of the proposition. 

It remains to show that the bottom cofibration sequence in the proposition extends to the left, 
and to obtain the left-hand square involving $\phi$. This is clear when $n=1$, since $W^1_{n-1}=\ast$ in this case. 
Since $\Sigma^{n-1}\hat W_n$ is simply connected when $n\geq 2$, and $\phi$ is a homotopy equivalence by Lemma~\ref{LCofib}, 
the $n\geq 2$ case follows by Lemma~\ref{LExtend}.
\end{proof}

\section{The Golod Property}
\label{SGolod}

From now on we fix $W=(D^1,S^0)^K$ as a subspace of $X=(D^1)^{\times n}$, where $K$ on vertex set $[n]$
has no ghost vertices. Thus, $W^1=(D^2,S^1)^K$. Since $W$ is connected, has the simplicial property, 
and each $W_k$ has a faithful neighbourhood by Examples~\ref{EFaithful} and~\ref{EFaithful2}, 
all of our previous results apply.

The following confirms a large primes version of Conjecture~\ref{CGolodConj},  
generalising Berglund's rational result in~\cite{BerglundRational}.

\begin{theorem}
\label{TMainStatementPrimes}
Localised at any sufficiently large prime $p$, 
$(D^2,S^1)^K$ is homotopy equivalent to a wedge of spheres if and only if $K$ is Golod over $\mb Z_p$.
\end{theorem}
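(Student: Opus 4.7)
The plan is to handle the easy direction directly and the hard direction by induction on the vertex count $n$. In the forward direction, a $p$-local wedge of spheres is a co-$H$-space, so all cup and higher Massey products in $H^*((D^2,S^1)^K;\mb Z_p)$ vanish; via the BBCG isomorphism this is precisely the statement that $K$ is Golod over $\mb Z_p$.

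For the converse, suppose $K$ is Golod over $\mb Z_p$. Since restrictions of Golod complexes are Golod (each full subcomplex corresponds to a retract of the relevant $\Tor$), each $K_I$ with $|I|<n$ is Golod over $\mb Z_p$, so by induction $(D^2,S^1)^{K_I}_{(p)} = W^1_{I,(p)}$ is a wedge of spheres at every sufficiently large prime. Running the equivalences $(a)\Rightarrow(e)\Rightarrow(c)\Rightarrow(b)$ of Proposition~\ref{PSplitting} applied to $W_{n-1}$ produces the splitting
$$
W^1_{n-1}\;\simeq\;\bigvee_{I\subsetneq [n]}\Sigma^{|I|+1}|K_I|,
$$
and at primes beyond the torsion occurring in any $H_*(|K_I|)$ (and beyond standard obstruction-theoretic bounds for desuspending a simply-connected finite complex with torsion-free homology to a wedge of spheres) this target is itself a wedge of spheres. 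Proposition~\ref{PCofibDiagram} extends the cofibration \seqmm{W^1_{n-1}}{}{W^1_n}{}{\hat W^1_n} to the left by a connecting map
$$
\delta\wcolon\Sigma^{n-1}\hat W_n\cong\Sigma^n|K|\longrightarrow W^1_{n-1},
$$
and combining Corollaries~\ref{CInverse} and \ref{CcoHspace} reduces the theorem to showing $\delta$ is null-homotopic after localization at $p$.

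Both source and target of $\delta$ have cells in a bounded range of dimensions depending only on $n$ and $\dim|K|$. For $p$ larger than roughly twice this range, standard metastable arguments imply $[\Sigma^n|K|, W^1_{n-1}]_{(p)}$ is detected by its action on mod-$p$ cohomology, so it suffices to show $\delta^*=0$. The long exact cohomology sequence of $W^1_{n-1}\to W^1_n\to\hat W^1_n$, read through the BBCG decomposition $H^*(W^1_n;\mb Z_p)\cong\bigoplus_I\tilde H^*(\Sigma^{|I|+1}|K_I|;\mb Z_p)$, identifies $\delta^*$ as the obstruction for the top summand $\tilde H^*(\Sigma^n|K|;\mb Z_p)$ to split off. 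Under the identification with $\Tor$, this obstruction is carried by the inclusions $\iota_{I,J}^*$ and their iterated coherences, which in $H^*((D^2,S^1)^K;\mb Z_p)$ are exactly the cup products and higher Massey products; all of these vanish by the Golod assumption, so $\delta^*=0$ and $\delta$ is null at large $p$, completing the induction.

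The main technical hurdle is the identification in the final paragraph of the components of $\delta^*$ explicitly with sums of cup and higher Massey products. This requires exploiting the refinement of $\mc D_{\mc A}(W_{n-1})$ by ordered partitions of $[n]$ from Section~\ref{SSplitting}, so that each nontrivial wedge summand of $\Sigma\mc D_{\mc A}(W_{n-1})$ pairs through the scanning map with a specific ordered iterated product on the dual side. The metastable dimension-count estimate itself is routine once the explicit bound is written out.
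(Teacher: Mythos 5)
There is a genuine gap, and it sits exactly where you defer to a ``routine'' estimate. Your reduction rests on the claim that, for $p$ large compared to the cell range, $[\Sigma^n|K|,\,W^1_{n-1}]_{(p)}$ is detected by the induced map on mod-$p$ cohomology. This is false, and no choice of $p$ fixes it: once $W^1_{n-1}$ is $p$-locally a wedge of spheres, Hilton--Milnor decomposes $[\Sigma^n|K|,\bigvee_\alpha S^{d_\alpha}]_{(p)}$ into factors indexed by iterated Whitehead products, and those summands carry \emph{rational} homotopy (e.g.\ $\pi_{4m-1}(S^{2m})\otimes\mathbb{Q}\neq 0$ and the weight-two basic products $[\iota_{d_\alpha},\iota_{d_\beta}]$), which survives localization at every prime and is invisible to cohomology. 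The dimensions here are genuinely out of the metastable range: the source has cells up to roughly dimension $2n$, while the target has $3$-dimensional spheres, so raising $p$ cannot create detection. Worse, your criterion does not actually use the Golod hypothesis: since the cofibration $W^1_{n-1}\to W^1_n\to \hat W^1_n$ splits after one suspension (BBCG), $\Sigma\delta$ is null for \emph{every} $K$, hence $\delta^*=0$ on cohomology always. So if your argument were valid it would show every moment-angle complex is a $p$-local wedge of spheres at large primes, contradicting e.g.\ $K=\partial(\mathrm{square})$, where $(D^2,S^1)^K\cong S^3\times S^3$ and the relevant component of $\delta$ is the Whitehead product $S^5\to S^3\vee S^3$, nontrivial at all primes. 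The real content is precisely to show that the Hilton--Milnor components of $\delta$ on Whitehead-product summands are controlled by cup and higher Massey products and vanish under Golodness; that is the hard step you flag as a ``technical hurdle'' but never supply, and without it the induction collapses.

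For comparison, the paper's proof avoids all of this. It uses Ganea's criterion: $W^1$ is a co-$H$-space iff the evaluation map $\seqm{\Sigma\Omega W^1}{ev}{W^1}$ has a right homotopy inverse. When $K$ is Golod over $\mb Z_p$, the evaluation map is known to be surjective on $\mb Z_p$-homology (cited from~\cite{arXiv:1211.0873}); on the other hand a suitable skeleton of $\Sigma\Omega W^1$, being a suspension of a finite complex, is $p$-locally a wedge of spheres for $p$ large, so one simply selects spheres hitting a basis of $H_*(W^1;\mb Z_p)$ to build the right inverse. No induction on vertices, no analysis of the connecting map $\delta$, and no identification of homotopy classes with Massey products is needed. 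If you want to salvage your route, you would have to carry out the Whitehead-product bookkeeping explicitly (essentially redoing Berglund's rational argument by hand), which is a substantially harder and different undertaking than what you have written.
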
 
\begin{proof}
The right-hand implication is clear. 
For the left-hand, recall that a space $Y$ is a co-$H$-space if and only if the evaluation map \seqm{\Sigma\Omega Y}{ev}{Y} 
has a right homotopy inverse~\cite{MR0267582}. A suspension $\Sigma Y$ of a simply connected finite $CW$-complex $Y$ is homotopy equivalent to a wedge of spheres when localised at sufficiently large primes (see for example~\cite{MR1802847, Anick3}). 

The moment-angle complex $W^1$ is a $2$-connected finite $CW$-complex. 
Then $\Omega W^1$ has finite type and the skeleta of $\Omega W^1$ are finite $CW$-complexes,
and suspending, the skeleta of $\Sigma\Omega W^1$ are homotopy equivalent to wedges of spheres when localised at sufficiently large primes. 
Also, the evaluation map \seqm{\Sigma\Omega W^1}{ev}{W^1} induces a surjection on $\mb Z_p$-homology whenever 
$H^+(W^1;\mb Z_p)$ has trivial cup products and Massey products, i.e. when $K$ is Golod over $\mb Z_p$
(see Theorem~$3.2$ and the proof of Theorem~$3.1$ in~\cite{arXiv:1211.0873}).
Then $H_*(W^1;\mb Z_p)$ being a finite dimensional vector space, for sufficiently large $k$ and prime $p$, 
we can construct a right homotopy inverse for $ev$ localised at $p$ by taking the inclusion of a skeleton $\seqm{(\Sigma\Omega W^1)^{(k)}}{}{\Sigma\Omega W^1}$
and selecting the appropriate spheres from a $p$-local splitting of $(\Sigma\Omega W^1)^{(k)}$.
In particular, $W^1$ is homotopy equivalent to a wedge of spheres localised at $p$. 
\end{proof}

We now focus on the integral case. 
An \emph{ordered partition} of a finite set $I$ is a sequence of disjoint non-empty subsets $\mc I=(I_1,\ldots,I_m)$ of $I$ such that 
$I=I_1\cup\cdots\cup I_m$.
If $I$ is a subset of the integers $[n]$ with $k=|I|$ elements, and $\mc S=(s_1,\ldots,s_k)$ is any sequence of real numbers, 
let $i_\ell$ denote the $\ell^{th}$ smallest element in $I$, $s'_j$ denote the $j^{th}$ smallest element in the set $S=\{s_1,\ldots,s_k\}$, 
and $m=|S|$ be the number of distinct elements in $\mc S$.
Then assign to $\mc S$ the ordered partition $I_{\mc S}=(I_1,\ldots,I_m)$ of $I$ where $I_j=\cset{i_\ell\in I}{s_\ell=s'_j}$. 
When $I=[n]$ we have $k=n$ and $I_j=\cset{\ell}{s_\ell=s'_j}$,
so for example, if $I=[4]$ and $\mc S=(-1,\pi,-1,0)$, then $[4]_{\mc S}=(\{1,3\},\{4\},\{2\})$.

We think of the topological $(n-1)$-simplex $|\Delta^{n-1}|$ and its subcomplexes $|K|$ as being without basepoint.
The first suspensions $\Sigma|\Delta^{n-1}|$ and $\Sigma|K|$ are therefore unreduced. 
Again, the suspensions $\Sigma|\Delta^{n-1}|$ and $\Sigma|K|$ are themselves basepointed, 
with the basepoint $\ast_{-1}$ the tip of the double cone corresponding to the basepoint $-1\in D^1=[-1,1]$.
Higher suspensions are therefore reduced. This means that any point $(t_1,\ldots,t_n,z)\in \Sigma^n|K|$
is identified with the basepoint $\ast$ if and only if $t_n=-1$ or $t_i=\pm 1$ for some $i<n$.

Take the diagonal
$$
\vartriangle_n=\cset{(x_1,\ldots,x_n)\in\mb R^n}{x_1=\cdots=x_n}
$$
and consider the following subspace of the smash product $\mc P_n=(\mb R^n-\vartriangle_n)\wedge\Sigma|\Delta^{n-1}|$:
$$
\mc Q_K = 
\cunionmulti{y\in(\mb R^n-\vartriangle_n)}{(I_1,\ldots,I_m)=[n]_{y}}{}{\{y\}\wedge\Sigma |K_{I_1}\ast\cdots\ast K_{I_m}|},
$$
where $[n]_y$ is a partition $[n]_{(y_1,\ldots, y_n)}$ of $[n]$ as defined above.
Here we took $\mb R^n-\vartriangle_n$ to be without a basepoint, 
so $\mc P_n$ is the half-smash product 
$((\mb R^n-\vartriangle_n)\times\Sigma|\Delta^{n-1}|)/(\mb R^n-\vartriangle_n)\times\{\ast\}$,
and is basepointed.

\begin{definition}
A simplicial complex $K$ on vertex set $[n]$ is \emph{weakly coherently homotopy Golod} if $K$ is a single vertex, 
or (recursively) $K\backslash\{i\}$ is weakly coherently homotopy Golod for each $i\in [n]$, 
and the map
$$
\Phi_K\wcolon\seqm{\Sigma^n |K|}{}{\Sigma\mc Q_K}
$$
given for any $z\in |K|$, $t_1,\ldots t_{n-1},t\in[-1,1]$, and $\beta=\max\{|t_1|,\ldots,|t_{n-1}|,0\}$ by 
$$
\Phi_K(t_1,\ldots,t_{n-1},t,z)=
\paren{2\beta-1,(t_1,\ldots,t_{n-1},0),(t,z)}  
$$
is nullhomotopic.
\end{definition}

\begin{theorem}
\label{TMainStatement1}
If $K$ is weakly coherently homotopy Golod, then $(D^2,S^1)^K$ is a co-$H$-space.
\end{theorem} 

We will need a bit of setup before we can prove this.
Recall that $\hat W$ is the image of $W$ under the quotient map $(D^1)^{\times n}\longrightarrow (D^1)^{\wedge n}$. 
To prove this theorem we will need to use a simplicial description of $\hat W$.
In~\cite{MR2673742} the theory of \emph{diagrams of spaces} was used to show the existence of a homeomorphism $\Sigma |K|\cong\hat W$,
but for our purposes, we give an explicit description of this homeomorphism as follows. 
Take the simplex $\Delta^{n-1}$ to be on vertex set $[n]$, and think of its geometric realisation as the subspace
$$
|\Delta^{n-1}|=\cset{(t_1,\ldots,t_n)\in\mb R^n}{0\leq t_i\leq 1\mbox{ and }t_1+\cdots+t_n=1}
$$
with each face $\sigma\in\Delta^{n-1}$ corresponding to the subspace
$|\sigma|=\cset{(t_1,\ldots,t_n)\in |\Delta^{n-1}|}{t_i=0\mbox{ if }i\nin\sigma}$.
Take the subspaces of $|\Delta^{n-1}|$ and $(D^1)^{\wedge n}$:
$$
U_{i}=\cset{(t_1,\ldots,t_n)\in |\Delta^{n-1}|}{t_i=\max\{t_1,\ldots,t_n\}}, 
$$
$$
V_{i,t}=\cset{(t'_1,\ldots,t'_n)\in (D^1)^{\wedge n}}{t'_i=t=\min\{t'_1,\ldots,t'_n\}}. 
$$
Consider a homeomorphism 
\begin{equation}
\label{EHomeomorphism}
h\wcolon\seqm{\Sigma|\Delta^{n-1}|}{\cong}{(D^1)^{\wedge n}}  
\end{equation}
given as follows. For $-1<t<1$, take the homeomorphism $h_{i,t}\colon\seqm{U_i}{}{V_{i,t}}$
given by mapping 
$$
(t_1,\ldots,t_n)\mapsto \paren{t+(1-t)\frac{t_i-t_1}{t_i},\ldots,t+(1-t)\frac{t_i-t_n}{t_i}},
$$
and its inverse homeomorphism $h^{-1}_{i,t}\colon\seqm{V_{i,t}}{}{U_i}$ is given by mapping  
$$
(t'_1,\ldots,t'_n)\mapsto \paren{\frac{1-t'_1}{n-\Sigma_{k=1}^n{t'_k}},\ldots,\frac{1-t'_n}{n-\Sigma_{k=1}^n{t'_k}}}
$$
(to see that $h_{i,t}\circ h^{-1}_{i,t}$ and $h^{-1}_{i,t}\circ h_{i,t}$ are the identity,
use the fact that here $t'_i=t$ and $1=t_1+\cdots+t_n$).
Notice that $h_{i,t}$ agrees with $h_{j,t}$ on $U_i\cap U_j$, and $h^{-1}_{i,t}$ agrees with $h^{-1}_{j,t}$ on $V_{i,t}\cap V_{j,t}$. 
Thus, for each $t\in(-1,1)$, we can piece together the homeomorphisms $h_{i,t}$ over all $i\in[n]$ to give a homeomorphism
$$
h_t\wcolon\seqm{|\Delta^{n-1}|=\paren{\cunion{i=1}{n}{U_i}}}{\cong}{\paren{\cunion{i=1}{n}{V_{i,t}}}=V_t}
$$   
defined unambiguously by mapping $z\mapsto h_{i,t}(z)$ if $z\in U_i$,
with inverse $h^{-1}_t$ defined by mapping $z'\mapsto h^{-1}_{i,t}(z')$ if $z'\in V_{i,t}$.
Notice that $(D^1)^{\wedge n}$ is the disjoint union of $V_t$ over all $t\in[-1,1]$, 
with $V_1$ being a single point $(1,\ldots,1)$, and $V_{-1}$ being the basepoint,
and the maps $h_t$ vary continuously with respect to $t$. 
Thus, we define the homeomorphism $h$ for any $(t,z)\in\Sigma|\Delta^{n-1}|$ by
$$
h(t,z)= 
\begin{cases}
h_t(z) & \mbox{if }-1<t<1;\\
(1,\ldots,1) & \mbox{if }t=1;\\
\ast & \mbox{if }t=-1.
\end{cases}
$$
The crucial property of $h$ is that it restricts on an $m$-dimensional face $\sigma$ of $\Delta^{n-1}$ to a homeomorphism
$$
h_\sigma\wcolon\seqm{\Sigma|\Delta^m|\cong \Sigma|\sigma|}{\cong}{(Y^\sigma_1\wedge\cdots\wedge Y^\sigma_n)\cong (D^1)^{\wedge (m+1)}},  
$$
where 
$$
Y^\sigma_i=
\begin{cases}
D^1 & \mbox{if }i\in\sigma\\
\{1\} & \mbox{if }i\nin\sigma.
\end{cases}
$$
Then piecing these together, for any subcomplex $L$ of $\Delta^{n-1}$, $h$ restricts to a homeomorphism
$$
\seqm{\Sigma|L|=\cunion{\sigma\in L}{}{\Sigma|\sigma|}}{\cong}{\cunion{\sigma\in L}{}{Y^\sigma_1\wedge\cdots\wedge Y^\sigma_n}}.  
$$
In particular, when $W=(D^1,S^0)^K$,  
$$
\hat W=\bigcup_{\sigma\in K}Y^\sigma_1\wedge\cdots\wedge Y^\sigma_n,
$$
so $h$ restricts to a homeomorphism $\Sigma |K|\cong\hat W$.
More generally, for any partition $(I_1,\ldots,I_m)$ of $[n]$, 
we have an inclusion $\hat W_{I_1}\wedge\cdots\wedge\hat W_{I_m}$ into $(D^1)^{\wedge n}$ given by mapping
elements by permuting coordinates, so that if a $j^{th}$ coordinate is the coordinate in a factor $\hat W_{I_k}$ that corresponds to $i\in I_k$,
upon mapping it becomes the $i^{th}$ coordinate. This then gives a homeomorphism onto its image
$$
\hat W_{I_1}\wedge\cdots\wedge \hat W_{I_m}\cong \cunion{\sigma_1\in K_{I_1},\cdots,\sigma_m\in K_{I_m}}{}
{Y^{(\sigma_1\cup\cdots\cup\sigma_m)}_1\wedge\cdots\wedge Y^{(\sigma_1\cup\cdots\cup\sigma_m)}_n},
$$
so more generally $h$ restricts to a homeomorphism
\begin{equation}
\label{EGeneralHomeo}
\seqm{\Sigma |K_{I_1}\ast\cdots\ast K_{I_m}|=\cunion{\sigma_1\in K_{I_1},\cdots,\sigma_m\in K_{I_m}}{}
{\Sigma|\sigma_1\cup\cdots\cup\sigma_m|}}{\cong}{\hat W_{I_1}\wedge\cdots\wedge \hat W_{I_m}}.
\end{equation}
In this manner, we will think of $\hat W_{I_1}\wedge\cdots\wedge\hat W_{I_m}$ as a subspace of $(D^1)^{\wedge n}$.

\begin{proof}[Proof of Theorem~\ref{TMainStatement1}]
Let $W=(D^1,S^0)^K$. At most $n-1$ particles collide in a configuration in $\mc D_{\mc A}(W_{n-1})$, 
and if we look at the subspace of all configurations of $n$ particles fixed at some $t_1,\ldots,t_n\in\mb R$
(that is, all possible labels for particles fixed in this way), 
then this subspace is homeomorphic to $\{y\}\wedge\hat W_{I_1}\wedge\cdots\wedge \hat W_{I_m}$, 
where $y=(t_1,\ldots,t_n)$, and $(I_1,\ldots,I_m)=[n]_{y}$. 
This in turn is homeomorphic to $\{y\}\wedge\Sigma |K_{I_1}\ast\cdots\ast K_{I_m}|$ 
via the restriction of the homeomorphism \seqm{\Sigma|\Delta^{n-1}|}{h}{(D^1)^{\wedge n}} in~(\ref{EHomeomorphism}).
Thus, we see that the map 
$$
\tilde h\wcolon\seqm{\mc Q_K}{}{\mc D_{\mc A}(W_{n-1})} 
$$
sending $((t_1,\ldots,t_n),x)\mapsto(t_1,\ldots,t_n;x_1,\ldots,x_n)$, where $(x_1,\ldots,x_n)=h(x)$, is a homeomorphism.
Notice this map fits into a commutative square
\[\diagram
\Sigma^n |K|\rrto^{\Phi_K}\dto^{\Sigma^{n-1} h}_{\cong} && \Sigma\mc Q_K\dto^{\Sigma\tilde h}_{\cong}\\
\Sigma^{n-1}\hat W\rto^{\phi}_{\simeq} & \mb C(\iota_{\mc A,n})\rto^-{\partial} & \Sigma\mc D_{\mc A}(W_{n-1}),
\enddiagram\]
where $\partial$ is the connecting map given by collapsing the subspace $\mc D_{\mc A}(W_n)$,
and $\phi$ the homotopy equivalence from Lemma~\ref{LCofib}.

Induct on number of vertices. Assume the statement holds for simplicial complexes on less than $n$ vertices.
Suppose $K$ is weakly coherently homotopy Golod. 
Directly from definition, each $K_I$ is weakly coherently homotopy Golod, 
so by our inductive assumption $W^1_I$ is a co-$H$-space for $|I|<n$. Then using Proposition~\ref{PSplitting}, 
the quotient map \seqm{W^1_I}{}{\hat W^1_I} has a right homotopy inverse for $|I|<n$, 
and to show $W^1$ is a co-$H$-space, it remains to show that \seqm{W^1}{}{\hat W^1} has a right homotopy inverse.
Since $\Phi_K$ is nullhomotopic and $\phi$ is a homotopy equivalence, 
$\partial$ is nullhomotopic by the above commutative square, 
and so \seqm{W^1}{}{\hat W^1} has a right homotopy inverse by Corollary~\ref{CInverse}.
\end{proof}

\begin{example}
Let $K$ be two disjoint vertices. 
In this case the right-hand factor in the definition of $\mc Q_K$ is in all cases homeomorphic to $D^2$, 
while left-hand factor is $\mb R^2$ minus the diagonal. Then $\mc Q_K$ is homotopy equivalent to the left half smash product $(\ast\sqcup\ast)\ltimes D^2$, 
and as such, it is contractible, and $\Phi_K$ is nullhomotopic.  
Therefore $K$ is weakly coherently homotopy Golod and $(D^2, S^1)^K$ is a co-$H$-space - as expected since it is homeomorphic to a $3$-sphere.
\end{example}

The weakly coherent homotopy Golod condition - as it stands - does not look near enough to 
the Golod condition for our liking.
With this in mind, we start by considering complexes $K$ for which each inclusion
\seqm{|K_{I\cup J}|}{\iota_{I,J}}{|K_I\ast K_J|} is nullhomotopic after appropriate suspension.
This is just a homotopy version of the Golod property without consideration for Massey products, 
so appropriately, we call it the \emph{homotopy Golod} condition.
It turns out it is a necessary condition for $(D^2,S^1)^K$ to be a co-$H$-space
(and so it might be a good avenue for finding a counter-example to Conjecture~\ref{CGolodConj}).

\begin{proposition}
\label{PNecessary}
If $(D^2,S^1)^K$ is a co-$H$-space, then each one of the suspended inclusions 
$$
\Sigma^{|I\cup J|+1}\iota_{I,J}\wcolon\seqm{\Sigma^{|I\cup J|+1} |K_{I\cup J}|}{}{\Sigma^{|I\cup J|+1} |K_I\ast K_J|}
$$
is nullhomotopic for every disjoint non-empty $I,J\subsetneq [n]$.
\end{proposition}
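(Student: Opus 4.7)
The plan is to reduce the claim to the standard fact that the reduced diagonal of a co-$H$-space is nullhomotopic. Concretely, with $W=(D^1,S^0)^K$ and $A=I\cup J$, I will recognize $\Sigma^{|A|+1}\iota_{I,J}$ as a specific ``matrix component'' of the reduced diagonal $\bar\Delta_A$ of the co-$H$-space $W^1_A$.

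By hypothesis $W^1=(D^2,S^1)^K$ is a co-$H$-space, so Proposition \ref{PSplitting} gives that $W^1_A$ is a simply connected co-$H$-space admitting a splitting $W^1_A\simeq\cvee{B\subseteq A}{}{\hat W^1_B}$. Fix compatible summand inclusions $i_B\wcolon\seqm{\hat W^1_B}{}{W^1_A}$ and summand projections $p_B\wcolon\seqm{W^1_A}{}{\hat W^1_B}$; the latter may be taken as $q_B\circ\pi_B$, where $\pi_B$ is the coordinate projection $\seqm{W^1_A}{}{W^1_B}$ and $q_B$ the quotient $\seqm{W^1_B}{}{\hat W^1_B}$. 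Combining the coordinate-suspension identity $\hat W^\ell_B\cong\Sigma^{\ell|B|}\hat W_B$ with the homeomorphism $h$ of equation \eqref{EGeneralHomeo}, we obtain natural identifications $\hat W^1_B\cong\Sigma^{|B|+1}|K_B|$ and $\hat W^1_I\wedge\hat W^1_J\cong\Sigma^{|A|+1}|K_I\ast K_J|$.

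Since $W^1_A$ is a co-$H$-space, its reduced diagonal $\bar\Delta_A\wcolon\seqm{W^1_A}{}{W^1_A\wedge W^1_A}$ is nullhomotopic, factoring as the comultiplication of $W^1_A$ followed by the nullhomotopic composite $W^1_A\vee W^1_A\hookrightarrow W^1_A\times W^1_A\to W^1_A\wedge W^1_A$. Pre-composing with $i_A$ and post-composing with $p_I\wedge p_J$ therefore yields a nullhomotopic matrix component
$$
\bar\Delta_A^{A;I,J}\wcolon\seqmmm{\hat W^1_A}{i_A}{W^1_A}{\bar\Delta_A}{W^1_A\wedge W^1_A}{p_I\wedge p_J}{\hat W^1_I\wedge\hat W^1_J}.
$$
On non-basepoint representatives, $\bar\Delta_A^{A;I,J}$ sends $x=((t_i,x_i))_{i\in A}$ to $([x|_I],[x|_J])$, obtained by splitting the coordinates of $x$ between $I$ and $J$.

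It remains to identify $\bar\Delta_A^{A;I,J}$ with $\Sigma^{|A|+1}\iota_{I,J}$, at least up to self-equivalences of source and target (which is sufficient, as such equivalences preserve nullhomotopy). At the unsuspended level, the coordinate-splitting $\seqm{\hat W_A}{}{\hat W_I\wedge\hat W_J}$ corresponds under $h$ (up to a permutation of smash factors) to $\Sigma\iota_{I,J}$; applying the $|A|$-fold coordinate suspension to both sides then transforms this into $\Sigma^{|A|+1}\iota_{I,J}$. The main obstacle is this last step: carefully tracking the coordinate permutations that intervene when rewriting the coordinate-wise smash $\hat W^1_I\wedge\hat W^1_J$ as the iterated suspension $\Sigma^{|A|+1}|K_I\ast K_J|$, so as to confirm that the matrix component matches the suspended simplicial inclusion in the homotopy category. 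By the coordinate-wise naturality of $h$ in equation \eqref{EGeneralHomeo} this is essentially a direct unwinding, completing the proof.
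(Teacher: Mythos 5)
Your overall strategy is the same as the paper's: use Proposition~\ref{PSplitting} to get that $W^1_{I\cup J}$ is a co-$H$-space, exploit the nullhomotopy of its reduced diagonal, and identify the resulting coordinate-splitting map with $\Sigma^{|I\cup J|+1}\iota_{I,J}$ via the homeomorphisms $\hat W^\ell_B\cong\Sigma^{\ell|B|}\hat W_B$ and \eqref{EGeneralHomeo}. The final ``unwinding'' step you worry about is indeed routine and is exactly what the paper does.

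However, there is a genuine gap in the middle: you assert that the matrix component $\bar\Delta_A^{A;I,J}=(p_I\wedge p_J)\circ\bar\Delta_A\circ i_A$ sends $x=((t_i,x_i))_{i\in A}$ to $([x|_I],[x|_J])$ ``on non-basepoint representatives.'' No such point-level formula is available, because $i_A\wcolon\seqm{\hat W^1_A}{}{W^1_A}$ is not an inclusion of a subspace: $\hat W^1_A$ is a quotient of $W^1_A$, and $i_A$ exists only as a right homotopy inverse of the quotient map $q_A$, produced abstractly by Proposition~\ref{PSplitting} from the co-$H$ hypothesis; it is defined only up to homotopy and admits no coordinate description. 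As written, the identification of $\bar\Delta_A^{A;I,J}$ with the coordinate-splitting map is therefore unjustified. The repair is the observation the paper makes explicit: the composite out of $W^1_A$ itself, namely $(q_I\wedge q_J)\circ(\pi_I\wedge\pi_J)\circ\bar\Delta_A$, is \emph{equal} (not just homotopic) to $\iota^1\circ q_A$, where $\iota^1\wcolon\seqm{\hat W^1_A}{}{\hat W^1_I\wedge\hat W^1_J}$ is the coordinate-wise splitting map; this is a genuine point-level identity since all maps involved are coordinate-wise. Since $\bar\Delta_A\simeq\ast$, the left side is nullhomotopic, and since $q_A$ has a right homotopy inverse (Proposition~\ref{PSplitting}(e)), one concludes $\iota^1\simeq\ast$; equivalently, in your formulation, $\bar\Delta_A^{A;I,J}=\iota^1\circ q_A\circ i_A\simeq\iota^1$. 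Only after this step does the identification of $\iota^1$ with $\Sigma^{|A|+1}\iota_{I,J}$ via the suspension coordinates and \eqref{EGeneralHomeo} finish the proof.
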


\begin{proof}

Recall that if $Y$ is a co-$H$-space with comultiplication \seqm{Y}{\psi}{Y\vee Y} if and only if 
the diagonal map \seqm{Y}{\vartriangle}{Y\times Y} is homotopic to the composite
\seqmm{Y}{\psi}{Y\vee Y}{include}{Y\times Y}. In particular, this implies the reduced diagonal map
$$
\bar\vartriangle\wcolon\seqmmm{Y}{\vartriangle}{Y\times Y}{}{(Y\times Y)/(Y\vee Y)}{\cong}{Y\wedge Y}
$$
is nullhomotopic whenever $Y$ is a co-$H$-space.
As before, let $W=W_n=(D^1,S^0)^K$, in which case $W^1=W^1_n=(D^2,S^1)^K$, and suppose $W^1$ is a co-$H$-space. 
By Proposition~\ref{PSplitting}, $W^1_{I\cup J}$ is a co-$H$-space for each disjoint non-empty $I,J\subsetneq [n]$, 
and so \seqm{W^1_{I\cup J}}{\bar\vartriangle}{W^1_{I\cup J}\wedge W^1_{I\cup J}} is nullhomotopic. 
Then the composite 
$$
\seqmmm{W^1_{I\cup J}}{\bar\vartriangle}{W^1_{I\cup J}\wedge W^1_{I\cup J}}{}{W^1_I\wedge W^1_J}{}{\hat W^1_I\wedge\hat W^1_J}
$$
is nullhomotopic (where the second last map is the smash of the coordinate-wise projection maps onto $W^1_I$ and $W^1_J$, 
and the last map is the smash of quotient maps) and moreover, is equal to the composite
$$
\seqmm{W^1_{I\cup J}}{quotient}{\hat W^1_{I\cup J}}{\iota^1}{\hat W^1_I\wedge\hat W^1_J}
$$
where the last map $\iota^1$ is the coordinate-wise inclusion 
(given by mapping $(x_1,\ldots,x_n)$ to the pair $((y_1,\ldots,y_{|I|}),(z_1,\ldots,z_{|J|}))$ 
such that $y_i=x_k$ if $k\in I$ is the $i^{th}$ largest element, 
or else $z_j=x_k$ if $k\in J$ is the $j^{th}$ largest element), so this composite is nullhomotopic as well. 
But since $W^1_{I\cup J}$ is a co-$H$-space, by Proposition~\ref{PSplitting} the quotient map \seqm{W^1_{I\cup J}}{}{\hat W^1_{I\cup J}} 
has a right homotopy inverse, so $\iota^1$ must be nullhomotopic. 
Notice that with respect to the homeomorphisms $\hat W^1_{I'}\cong\Sigma^{|I'|}\hat W_{I'}$ given in Section~\ref{SSplitting}, 
$\iota^1$ is the $(|I\cup J|)$-fold suspension of the coordinate-wise inclusion $\iota^0\wcolon\seqm{\hat W_{I\cup J}}{}{\hat W_I\wedge\hat W_J}$,
and with respect to the homeomorphisms~\eqref{EGeneralHomeo}, $\iota^0$ is just the suspended inclusion 
\seqm{\Sigma|K_{I\cup J}|}{\Sigma\iota_{I,J}}{\Sigma |K_I\ast K_J|}. Thus, we see that $\Sigma^{|I\cup J|+1}\iota_{I,J}$ is nullhomotopic.

\end{proof}

In a follow-up paper~\cite{BebenGrbic2}, we show that moment-angle complexes over $\frac{n}{3}$-neighbourly simplicial complexes are homotopy Golod if and only if they are weakly coherently homotopy Golod, and as such, if and only if they are co-$H$-spaces.

Next, we strengthen a desuspension of the homotopy Golod condition so that it is sufficient for $(D^2,S^1)^K$ to be a co-$H$-space, though possibly no longer necessary.
This is the \emph{coherent homotopy Golod} condition defined below. 
It will be implicit in the proof of Theorem~\ref{TMainStatement2} 
- using Corollary~\ref{CInverse} - that the coherent 
homotopy Golod condition implies the weak coherent homotopy Golod condition.
Also, the following generalisation of the Golod and homotopy Golod conditions - and in turn the coherent homotopy Golod condition - to a relative context will be useful: if $K$ is a subcomplex of $L$, and $I,J\subsetneq [n]$ are disjoint, 
we may ask if the inclusions $\iota_{I,J}\colon\seqm{|K_{I\cup J}|}{}{|L_I\ast L_J|}$ induce a trivial map on cohomology or are nullhomotopic after some suspension.   

In defining the coherent homotopy Golod condition, 
recall that we want the inclusions $\Sigma\iota_{I,J}$ to be nullhomotopic for disjoint non-empty $I,J\subsetneq [n]$, 
and at the same time we want the coherence conditions between these nullhomotopies to hold as outlined in the introduction. 
These properties are encoded as follows.

\begin{figure}[h]
\centering
\includegraphics[scale=0.25]{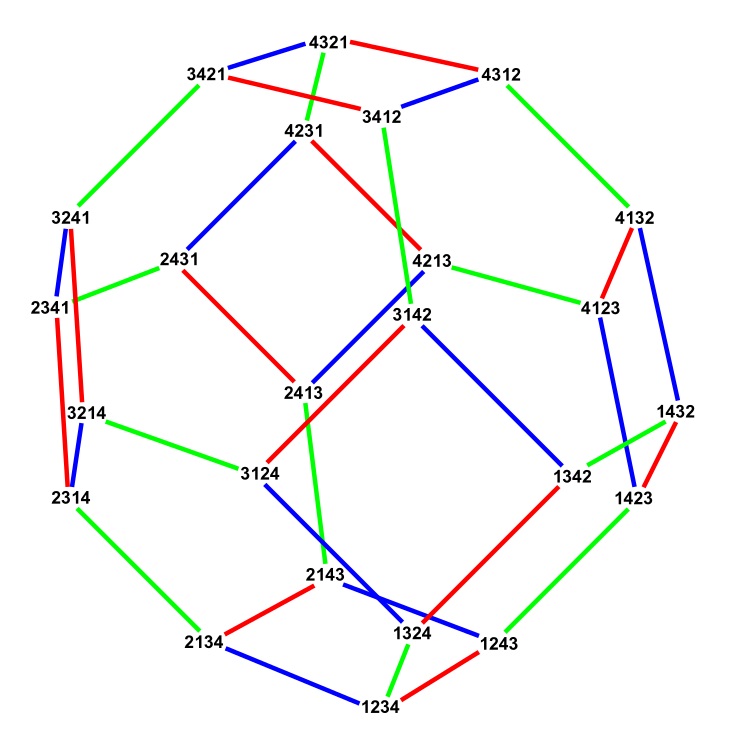}
\caption{\scriptsize{$1$-skeleton of an order $4$ permutohedron}}
\end{figure}

Consider the delta set $\mc K_n=\{\mc F_0,\ldots,\mc F_{n-1}\}$, whose set of $(m-2)$-faces $\mc F_{m-2}$ is the set of all
length $m$ ordered partitions $(I_1,\ldots,I_m)$ of $[n]$, with each $I_j\neq\emptyset$, and whose face maps 
$d_i\wcolon\seqm{\mc F_{m-2}}{}{\mc F_{m-3}}$
are given by $d_i((I_1,\ldots,I_m))=(I_1,\ldots,I_{i-1},I_i\cup I_{i+1},I_{i+2},\ldots,I_m)$ for $1\leq i\leq m-1<n$.
These satisfy the required identity $d_i\circ d_j=d_{j-1}\circ d_i$ when $i<j$. 
Since each ordered partition $\mc S\in\mc F_{m-2}$ is uniquely determined by its $(m-3)$-faces $d_1(\mc S)$,...,$d_{m-1}(\mc S)$, 
then $\mc K_n$ is a simplicial complex of dimension $n-2$. We will see that it is a triangulation of $S^{n-2}$. 
In fact, $\mc K_n$ is the dual of the order $n$ permutohedron.

We will from now on assume $K$ is a subcomplex of $L$ on vertex set $[n]$, $L$ has no ghost vertices, 
though $K$ possibly has less than $n$ vertices if $K\neq L$. If $K=L$, then $K$ also has no ghost vertices. 
We fix $W'=Z_L(D^1,S^0)$, in which case $\hat W'\cong \Sigma|L|$. If $K=L$, then $W'=W$. 

Let $Cone(\Sigma|K|)=[0,1]\times\Sigma|K|/\sim$ under the identifications $(1,x)\sim(t,\ast_{-1})\sim\ast$.

\begin{definition}
The pair $(L,K)$ on vertex set $[n]$ is \emph{coherently homotopy Golod} if $L$ is a single vertex or $K=\emptyset$, 
or (recursively) $(L\backslash\{i\},K\backslash\{i\})$ is coherently homotopy Golod on vertex set $[n]-\{i\}$ for each $i\in [n]$, 
and there is a map
$$
\bar\Psi_{L,K}\wcolon\seqm{|\mc K_n|\times Cone(\Sigma|K|)}{}{\Sigma|\Delta^{n-1}|}
$$
such that for any $\gamma\in |\mc K_n|$, $\bar\Psi_{L,K}(\gamma,\ast)$ is the basepoint $\ast_{-1}\in\Sigma|\Delta^{n-1}|$, and:
\begin{itemize}
\item[(1)] the restriction of $\bar\Psi_{L,K}$ to $\{\gamma\}\times(\{0\}\times\Sigma |K|)$ is the suspended inclusion 
\seqm{\Sigma |K|}{}{\Sigma |L|\subseteq\Sigma|\Delta^{n-1}|}; 
\item[(2)] if $\gamma\in |\mc S|$ for some $\mc S=(I_1,\ldots I_m)\in\mc F_{m-2}$, 
then $\bar\Psi_{L,K}$ maps $\{\gamma\}\times Cone(\Sigma |K|)$ to a subspace of $\Sigma |L_{I_1}\ast\cdots\ast L_{I_m}|$ in $\Sigma|\Delta^{n-1}|$.
\end{itemize}
If $K=L$, we simply say $K$ is \emph{coherently homotopy Golod}, and denote $\bar\Psi_K=\bar\Psi_{K,K}$.
\end{definition}

We will give two more equivalent definitions of coherent homotopy Golodness, which are a bit less directly related to the Golod condition, but which from our standpoint will be more workable. 
Recall that if $\mc S=(s_1,\ldots,s_n)$ is any sequence of real numbers, 
then $[n]_{\mc S}$ is the ordered partition $(I_1,\ldots,I_m)$ of $[n]$, where $I_j=\cset{\ell\in [n]}{s_\ell=s'_j}$,
$s'_j$ is the $j^{th}$ smallest element in the set $\{s_1,\ldots,s_n\}$, and $m=|\{s_1,\ldots,s_n\}|$ is the number is distinct elements in $\mc S$.

\begin{definition}[Second definition of coherent homotopy Golodness]
The pair $(L,K)$ on vertex set $[n]$ is \emph{coherently homotopy Golod} if $L$ is a single vertex or $K=\emptyset$, 
or (recursively) $(L\backslash\{i\},K\backslash\{i\})$ is coherently homotopy Golod  on vertex set $[n]-\{i\}$ for each $i\in [n]$, 
and for some choice of vertex ordering there is a basepoint preserving map
$$
\Psi_{L,K}\wcolon\seqm{\Sigma^n |K|}{}{\Sigma|\Delta^{n-1}|}
$$
such that for any $y=(t_1,\ldots,t_{n-1})\in \frac{[-1,1]^{\times (n-1)}}{\bd([-1,1]^{\times(n-1)})}$ that is not the basepoint, 
$\Psi_{L,K}$ maps the subspace $\{y\}\wedge \Sigma |K|$ of $\Sigma^n|K|$ as follows:
\begin{itemize}
\item[(1)] when $t_1=\cdots=t_{n-1}=0$, the restriction of $\Psi_{L,K}$ to $\{y\}\wedge \Sigma |K|$ is the suspended inclusion 
\seqm{\Sigma |K|}{}{\Sigma |L|\subseteq\Sigma|\Delta^{n-1}|}; 
\item[(2)] letting $\mc S_y=(t_1,\ldots,t_{n-1},0)$ and $(I_1,\ldots,I_m)=[n]_{\mc S_y}$, 
$\Psi_{L,K}$ maps $\{y\}\wedge \Sigma |K|$ to a subspace of $\Sigma |L_{I_1}\ast\cdots\ast L_{I_m}|$ in $\Sigma|\Delta^{n-1}|$.
\end{itemize}
For the base case where $K$ is a single vertex, $\Psi_{L,K}$ can be taken as the constant map.
If $K=L$, we simply say $K$ is \emph{coherently homotopy Golod}, and denote $\Psi_K=\Psi_{K,K}$.
\end{definition} 

\begin{remark}
\label{RInclusion}
If $K$ is on vertex set $J\subseteq [n]$, $|J|=m$, recall the inclusion \seqm{\Sigma |K|}{}{\Sigma|\Delta^{n-1}|} 
is up to homeomorphism the inclusion \seqm{\hat W}{}{(D^1)^{\wedge n}}, given as the restriction of the inclusion
\seqm{(D^1)^{\wedge m}}{}{(D^1)^{\wedge n}} sending $(x_1,\ldots x_m)\mapsto (x'_1,\ldots x'_n)$,
where $x'_i=x_j$ when $i$ is the $j^{th}$ largest element in $J$, otherwise $x'_i=1$. 
\end{remark}

\begin{definition}[Third definition of coherent homotopy Golodness]
Same as the second definition, but with $\Sigma|\Delta^{n-1}|$ replaced with $(D^1)^{\wedge n}$, 
the subspaces $\Sigma |L_{I_1}\ast\cdots\ast L_{I_m}|$,  $\Sigma |L|$, and $\Sigma |K|$ of $\Sigma|\Delta^{n-1}|$
replaced with the subspaces $\hat W'_{I_1}\wedge\cdots\wedge \hat W'_{I_m}$, $\hat W'$, and $\hat W$ of $(D^1)^{\wedge n}$,
and $\Psi_{L,K}$ becoming a map 
$$
\Psi_{L,K}\colon\seqm{\Sigma^{n-1}\hat W}{}{(D^1)^{\wedge n}}.
$$ 
If $K=L$, then $\hat W'_I=\hat W_I\cong \Sigma|K_I|$, and we denote $\Psi_K=\Psi_{K,K}$. 
\end{definition}

\begin{proposition}
\label{PEquivalent}
All three definitions of coherent homotopy Golodness are equivalent.
\end{proposition}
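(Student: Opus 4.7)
The plan is to establish the two equivalences (2)$\Leftrightarrow$(3) and (1)$\Leftrightarrow$(2) separately, both via natural identifications of parameter and target spaces that preserve the subspace conditions.

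The equivalence (2)$\Leftrightarrow$(3) follows from the homeomorphism $h\colon \Sigma|\Delta^{n-1}|\stackrel{\cong}{\longrightarrow}(D^1)^{\wedge n}$ of~\eqref{EHomeomorphism}. I would conjugate $\Psi_{L,K}$ by $h$ on the target and by $\Sigma^{n-1}h|_{\Sigma|K|}\colon\Sigma^n|K|\stackrel{\cong}{\longrightarrow}\Sigma^{n-1}\hat W$ on the source. By~\eqref{EGeneralHomeo}, $h$ restricts to homeomorphisms $\Sigma|L_{I_1}\ast\cdots\ast L_{I_m}|\cong \hat W'_{I_1}\wedge\cdots\wedge\hat W'_{I_m}$ for each ordered partition $(I_1,\ldots,I_m)$, so condition~(2) translates directly between the two definitions. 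For condition~(1), Remark~\ref{RInclusion} identifies the suspended inclusion $\Sigma|K|\hookrightarrow\Sigma|L|$ with the coordinate-wise inclusion $\hat W\hookrightarrow\hat W'$, so condition~(1) also matches.

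For (1)$\Leftrightarrow$(2), the key geometric input is that the hyperplane arrangement $\{t_i=t_j : i,j\in[n-1]\}\cup\{t_i=0 : i\in[n-1]\}$ in $\mb R^{n-1}$ stratifies this space into cones through the origin, one for each ordered partition of $[n]$: the open cone for $\mc S$ is exactly the locus $\{y\in\mb R^{n-1} : [n]_{\mc S_y}=\mc S\}$. Intersecting with a small sphere around the origin gives a simplicial decomposition of $S^{n-2}$ whose $(m-2)$-dimensional faces correspond bijectively to length-$m$ ordered partitions of $[n]$, with face relations reproducing the coarsening maps $d_i$ of $\mc K_n$. This realises $|\mc K_n|$ (the dual of the order $n$ permutohedron) as a triangulation of $S^{n-2}$. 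I would fix once and for all a radial embedding $|\mc K_n|\hookrightarrow\partial[-1,1]^{n-1}$ in which the closed face $|\mc S|$ lies in the closed cone of $\mc S$, and then define a quotient map
$$
q\colon|\mc K_n|\times Cone(\Sigma|K|)\longrightarrow \Sigma^n|K|\cong\frac{[-1,1]^{n-1}}{\partial[-1,1]^{n-1}}\wedge\Sigma|K|,\qquad (\gamma,(s,z))\mapsto(s\gamma,z).
$$
The cone point maps to the basepoint (since $s=1$ forces $s\gamma\in\partial[-1,1]^{n-1}$), and $q$ collapses $|\mc K_n|\times\{0\}\times\{z\}$ onto the single point $(0,z)$. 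Setting $\bar\Psi_{L,K}=\Psi_{L,K}\circ q$ puts the two definitions into correspondence: condition~(1) of Def.~1 matches condition~(1) of Def.~2 because $q$ forgets $\gamma$ at $s=0$; condition~(2) matches because for $\gamma\in|\mc S|^\circ$ and $s>0$ one has $[n]_{\mc S_{s\gamma}}=\mc S$, while for $\gamma$ on a boundary face $|\mc S'|^\circ\subset\partial|\mc S|$ (with $\mc S'$ a coarsening of $\mc S$) the point $s\gamma$ lies in the cone of $\mc S'$, and Def.~2 places the image in the smaller subspace $\Sigma|L_{I'_1}\ast\cdots\ast L_{I'_{m'}}|\subseteq\Sigma|L_{I_1}\ast\cdots\ast L_{I_m}|$ (coarser partitions give smaller join subcomplexes, as one verifies directly by grouping join factors). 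Conversely, any $\bar\Psi_{L,K}$ satisfying Def.~1's conditions descends through $q$ by continuity and condition~(1), giving a well-defined $\Psi_{L,K}$.

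The main obstacle will be the combinatorial identification of the fan stratification of $\mb R^{n-1}$ with $|\mc K_n|$ and the choice of a compatible radial embedding into $\partial[-1,1]^{n-1}$; once this geometric set-up is in place, continuity of $q$ and the matching of subspace conditions reduce to bookkeeping on each simplex of $|\mc K_n|$.
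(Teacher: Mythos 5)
Your proposal follows essentially the same route as the paper: part (2)$\Leftrightarrow$(3) is exactly the paper's conjugation by the homeomorphism $h$ of~\eqref{EHomeomorphism} and its restrictions~\eqref{EGeneralHomeo}, and for (1)$\Leftrightarrow$(2) the paper's maps $\tau_t$ are precisely an explicit formula for the radial embedding of $|\mc K_n|$ into the max-norm sphere $\partial[-1,1]^{\times(n-1)}$ compatible with the braid-fan strata (so that a point in the open face of $\mc S'$ has associated ordered partition $\mc S'$), after which the paper's homeomorphism $\tau_K\colon Q\to S^{n-1}\wedge\Sigma|K|$ is your quotient map $q$, and the matching of conditions (1) and (2) and the descent argument proceed just as you describe. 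The only difference is that you assert the compatible radial embedding via the standard permutohedral-fan picture, where the paper writes out $\tau_t$ and its inverse explicitly; this is a presentational rather than a substantive difference.
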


\begin{proof}
Since the homeomorphisms $\Sigma |K|\cong \hat W$ and 
$\Sigma |L_{I_1}\ast\cdots\ast L_{I_m}|\cong \hat W'_{I_1}\wedge\cdots\wedge \hat W'_{I_m}$
that we defined before are restrictions of the homeomorphism $\Sigma|\Delta^{n-1}|\cong (D^1)^{\wedge n}$, 
the equivalence of the second and third definitions follows.

Think of $S^{n-1}$ as the unit $(n-1)$-cube quotient boundary  $[-1,1]^{\times (n-1)}/\bd([-1,1]^{\times(n-1)})$.
We can write $S^{n-1}=\bigcup_{0\leq t\leq 1}U_t$, where $U_t=\cset{(t_1,\ldots,t_{n-1})\in S^{n-1}}{\max\{|t_1|,\ldots,|t_{n-1}|\}=t}$.
When $0<t<1$, $U_t$ is homeomorphic to $S^{n-2}$, while $U_1=\{\ast\}$ and $U_0=\{(0,\ldots,0)\}$, 
where $\ast\in S^{n-1}$ is the basepoint. We construct homeomorphisms
$$
\tau_t\wcolon\seqm{|\mc K_n|}{}{U_t\cong S^{n-2}}
$$
for $0<t<1$ as follows.
Given any face $\mc S=(I_1,\ldots, I_m)\in \mc F_{m-2}$ of $\mc K_n$, its vertices are the length $2$ partitions
$\mc S_i=(I_1\cup\cdots\cup I_i, I_{i+1}\cup\cdots\cup I_m)$ of $[n]$ for $i=1,\ldots,m-1$, 
so we can think of any point $\gamma\in|\mc S|$ as a formal sum 
$$
\gamma=s_1\mc S_1+\cdots+s_{m-1}\mc S_{m-1}
$$ 
such that each $s_i\geq 0$ and $s_1+\cdots+s_{m-1}=1$, and $\gamma$ is on a boundary face $|d_i(\mc S)|\subseteq|\mc S|$ if and only if $s_i=0$, 
in which case the summand $s_i\mc S_i$ vanishes. Given $j\in [n]$, let $j_{\mc S}$ be the integer such that $j\in I_{j_{\mc S}}$,
and let $t_{j,\gamma}=s_0+s_1+\cdots+s_{j_{\mc S}-1}$ where $s_0=0$. Notice at least two of these must be distinct, 
so $\beta_\gamma=\max\{|t_{1,\gamma}-t_{n,\gamma}|,\ldots,|t_{n-1,\gamma}-t_{n,\gamma}|\}>0$. 
Then define $\tau_t$ by 
$$
\tau_t(\gamma) = \paren{\frac{t}{\beta_\gamma}(t_{1,\gamma}-t_{n,\gamma}),\ldots,\frac{t}{\beta_\gamma}(t_{n-1,\gamma}-t_{n,\gamma})}.
$$ 
One can see that $\tau_t$ is well-defined by checking this definition agrees on each boundary face $d_i(\mc S)$. 
If $\gamma$ is in the interior of some $k$-face $|\mc S'|\subseteq|\mc S|$ for $k\leq m$, then $[n]_{\tau_t(\gamma)\times\{0\}}=\mc S'$. 
Then one can check that $\tau_t$ is a homeomorphism with inverse $\tau^{-1}_t\colon\seqm{U_t}{}{|\mc K_n|}$ given by 
$$
\tau^{-1}_t((t_1,\ldots,t_{n-1})) = \paren{\frac{t'_2-t'_1}{t'_k-t'_1}}\mc S'_1+\cdots+\paren{\frac{t'_k-t'_{k-1}}{t'_k-t'_1}}\mc S'_{k-1}
$$ 
where $t'_\ell$ is the $\ell^{th}$ smallest integer in the set $S=\{t_1,\cdots,t_{n-1},0\}$, $k=|S|$,  
and $\mc S'_i$ is the length $2$ partition $(I'_1\cup\cdots\cup I'_i, I'_{i+1}\cup\cdots\cup I'_k)$, given that 
$\mc S'=(I'_1,\ldots, I'_k)$ denotes the partition $[n]_{(t_1,\ldots,t_{n-1},0)}$.

For $t=0$ and $t=1$, we define $\tau_0\colon\seqm{|\mc K_n|}{}{U_0}$ and $\tau_1\colon\seqm{|\mc K_n|}{}{U_1}$ to be the constant maps sending 
all points to $(0,\ldots,0)$ and $\ast$ respectively.

Take the quotient space 
$$
Q=|\mc K_n|\times Cone(\Sigma|K|)/\sim
$$
under the identifications $(\gamma,(0,x))\sim (\gamma',(0,x))\sim(0,x)$ and $(\gamma,\ast)\sim\ast$
for any $\gamma,\gamma'\in |\mc K_n|$ and $x\in \Sigma|K|$. Let
$$
\tau_K\wcolon\seqm{Q}{}{S^{n-1}\wedge \Sigma |K|\cong \Sigma^n |K|}
$$ 
be given by mapping $(\gamma,(t,x))\mapsto (\tau_t(\gamma),x)$. This is a homeomorphism, with inverse
$$
\tau^{-1}_K\wcolon\seqm{S^{n-1}\wedge \Sigma |K|}{}{Q}
$$ 
given by mapping $(0,\ldots,0,x)\mapsto (0,x)$, $\ast\sim(\ast,x)\mapsto \ast$, and 
$(t_1,\ldots,t_{n-1},x)\mapsto (\tau^{-1}_t(t_1,\ldots,t_{n-1}),(t,x))$ when $(t_1,\ldots,t_{n-1})\in U_t$ for some $0<t<1$. 
From the first definition of coherent homotopy Golodness, we see $\bar\Psi_{L,K}$ factors as 
$$
\bar\Psi_{L,K}\wcolon\seqmm{|\mc K_n|\times Cone(\Sigma|K|)}{quotient}{Q}{\hat\Psi_{L,K}}{\Sigma|\Delta^{n-1}|}.
$$
Thus, given $\bar\Psi_{L,K}$ exists as in the first definition, 
we can construct $\Psi_{L,K}$ as in the second definition by taking it as the composite $\hat\Psi_{L,K}\circ\tau^{-1}_K$. 
Conversely, given $\Psi_{L,K}$ exists, we can define $\hat\Psi_{L,K}=\Psi_{L,K}\circ\tau_K$, 
and then take $\bar\Psi_{L,K}$ to be the above composite of the quotient map and $\hat\Psi_{L,K}$. 
Therefore, the first and second definitions are equivalent.

\end{proof}

Notice each parameter $t_i$ in the second and third definition of coherent homotopy Golodness corresponds to the vertex $\{i\}$ in $L$.
But there are $n$ vertices and only $n-1$ parameters $t_i$, the vertex $\{n\}$ being treated exceptionally here. 
Then both the second and third definitions depend on the choice of ordering of vertices in $K$ and $L$ by elements in $[n]$.
The first definition of coherent homotopy Golodness is however independent of vertex ordering, 
meaning it holds for all orderings if it holds for one. Moreover, in the proof of Proposition~\ref{PEquivalent}
the argument that the first definition implies the second and third works for any choice of ordering of vertices.
We can thus go from the second or third definitions to the first definition, change the ordering, 
and then come back again with a new map $\Psi_{L,K}$ corresponding to the new ordering. Therefore:

\begin{corollary}
\label{COrdering}
If the second or third definitions of coherent homotopy Golodness hold for one vertex ordering, then they hold for all vertex orderings.~$\qqed$
\end{corollary}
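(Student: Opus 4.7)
The plan is a direct two-step transfer through the first definition, using the fact that the first definition is manifestly symmetric in the vertices while the second and third definitions are not. First I would observe that the data in the first definition -- the simplicial complex $\mc K_n$ of ordered partitions of $[n]$, the simplex $\Delta^{n-1}$, the subcomplex inclusion $|K|\subseteq |L|\subseteq |\Delta^{n-1}|$, and the subspaces $\Sigma|L_{I_1}\ast\cdots\ast L_{I_m}|$ indexed by ordered partitions $(I_1,\ldots,I_m)$ -- single out no particular element of $[n]$. Concretely, any permutation $\sigma$ of $[n]$ induces a simplicial automorphism of $\Delta^{n-1}$ and a corresponding automorphism of $\mc K_n$ sending $(I_1,\ldots,I_m)$ to $(\sigma(I_1),\ldots,\sigma(I_m))$, and these are compatible with the subspace conditions $(1)$ and $(2)$. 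Hence the existence of a map $\bar\Psi_{L,K}$ satisfying the first definition is a property of the unordered pair $(L,K)$.

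Next, starting from the second or third definition for some vertex ordering $\pi$, Proposition~\ref{PEquivalent} already provides a map $\bar\Psi_{L,K}$ witnessing the first definition (constructed as the factorization through $Q$ of the composite $\Psi_{L,K}\circ\tau_K$). So the first definition holds, and by the ordering-invariant nature just noted, it holds in a way that is intrinsic to $(L,K)$.

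Now fix any desired vertex ordering $\pi'$. I would re-examine the argument in Proposition~\ref{PEquivalent} showing ``first $\Rightarrow$ second and third.'' That argument uses the homeomorphisms $\tau_t\wcolon |\mc K_n|\to U_t$ and the quotient $\tau_K\wcolon Q\to\Sigma^n|K|$; both are constructed coordinate-by-coordinate using a choice of which index plays the role of the ``missing'' coordinate (here, index $n$). Replacing this choice by whatever role $\pi'$ assigns to the last vertex -- equivalently, conjugating $\tau_t$ and $\tau_K$ by the permutation matching $\pi$ to $\pi'$ -- the construction goes through verbatim and produces a map $\Psi_{L,K}$ satisfying conditions $(1)$ and $(2)$ of the second definition with respect to the ordering $\pi'$. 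The same map, composed with the homeomorphism~\eqref{EHomeomorphism}, yields the third definition for $\pi'$.

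No step here is a serious obstacle: the corollary is essentially an observation about the symmetry of the first definition combined with the symmetry-breaking nature of the equivalence in Proposition~\ref{PEquivalent}. The only point requiring care is checking that the homeomorphisms $\tau_t$ and $\tau_K$ -- and the identification of subspaces $\{y\}\wedge\Sigma|K|$ with partitions $[n]_{\mc S_y}$ -- genuinely go through for an arbitrary distinguished vertex, which is immediate from the formulas once one tracks the permutation.
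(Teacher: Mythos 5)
Your proposal is correct and follows essentially the same route as the paper: the paper also notes that the first definition is manifestly independent of the vertex ordering and that the argument in Proposition~\ref{PEquivalent} passing from the first definition back to the second and third works for any choice of ordering, so one transfers through the first definition and returns with the new ordering. Your additional remarks about permutation automorphisms of $\Delta^{n-1}$ and $\mc K_n$ merely make explicit the symmetry the paper takes for granted.
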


We will use the second and third definitions of coherent homotopy Golodness from now on. 

\begin{theorem}
\label{TMainStatement2}
If $K$ is coherently homotopy Golod, then $(D^2,S^1)^K$ is a co-$H$-space.
\end{theorem}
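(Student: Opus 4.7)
The plan is to show by induction on $n$ that coherent homotopy Golodness implies weakly coherent homotopy Golodness, after which Theorem~\ref{TMainStatement1} delivers the conclusion. The two definitions share the same base cases (a single vertex, or $K=\emptyset$), and the recursive clause of the coherent condition forces each deletion $K\backslash\{i\}$ to be coherently homotopy Golod, which by the inductive hypothesis is weakly coherently homotopy Golod. The only nontrivial task in the inductive step is to produce a nullhomotopy of the map $\Phi_K\wcolon\seqm{\Sigma^n|K|}{}{\Sigma\mc Q_K}$.

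I will build this nullhomotopy from $\Psi_K\wcolon\seqm{\Sigma^n|K|}{}{\Sigma|\Delta^{n-1}|}$ of the second definition, in two stages. First, using property (1) that $\Psi_K$ restricts to the suspended inclusion at $(t_1,\ldots,t_{n-1})=(0,\ldots,0)$, I would homotope $\Phi_K$ to the map
$$
G(t_1,\ldots,t_{n-1},t,z)=\paren{2\beta-1,\,(t_1,\ldots,t_{n-1},0),\,\Psi_K((t_1,\ldots,t_{n-1}),(t,z))}
$$
via the linear homotopy that replaces the third coordinate $(t,z)$ by $\Psi_K(s\cdot(t_1,\ldots,t_{n-1}),(t,z))$ for $s\in[0,1]$; at $s=0$ this agrees with $\Phi_K$, and at $s=1$ it equals $G$. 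Then I would nullhomotope $G$ by simultaneously scaling $(t_1,\ldots,t_{n-1})$ down to zero, which forces the suspension coordinate $2\beta-1$ to $-1$ and collapses everything to the basepoint of $\Sigma\mc Q_K$.

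The hard part will be verifying that both homotopies take values in the subspace $\Sigma\mc Q_K$, not merely in the ambient $\Sigma((\mb R^n-\vartriangle_n)\wedge\Sigma|\Delta^{n-1}|)$. This is precisely where property (2) of the coherent homotopy Golod condition enters: for any $s>0$ the scaled tuple $(st_1,\ldots,st_{n-1},0)$ has the same ordered partition of $[n]$ as $(t_1,\ldots,t_{n-1},0)$, so $\Psi_K$ applied to the scaled input lands in the join $\Sigma|K_{I_1}\ast\cdots\ast K_{I_m}|$ matching the middle coordinate of the homotopy. At $s=0$ the image is $(t,z)\in\Sigma|K|$, which lies in every such join since $K\subseteq K_{I_1}\ast\cdots\ast K_{I_m}$ as a simplicial complex on $[n]$. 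Once $\Phi_K$ is shown nullhomotopic, $K$ is weakly coherently homotopy Golod, and Theorem~\ref{TMainStatement1} finishes the argument.
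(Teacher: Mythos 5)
Your overall route---show by induction that the coherent condition implies the weakly coherent one, then invoke Theorem~\ref{TMainStatement1}---is legitimate and is organized differently from the paper's proof, which never passes through $\Phi_K$: there $\Psi_K$ is used directly to define $g\colon\seqm{\Sigma^{n-1}\hat W}{}{\mc D_{\mc A}(W)}$, $g(\sigma)=(t_1,\ldots,t_{n-1},0;\Psi_K(\sigma))$, and one checks $q\circ g\simeq\phi$ so that Corollary~\ref{CInverse} applies. Your first homotopy is correct, and is essentially the paper's homotopy $H_t$ run in the opposite direction: you scale only the argument of $\Psi_K$ while keeping the suspension coordinate $2\beta-1$ (computed from the \emph{unscaled} $t_i$) and the middle coordinate fixed; well-definedness on $\Sigma^n|K|$ holds because every representative of the basepoint has either some $|t_i|=1$ (so $2\beta-1=1$) or $(t,z)=\ast$ (so the label is $\ast$, $\Psi_K$ being basepointed), and containment in $\Sigma\mc Q_K$ follows from property (2) together with the invariance of $[n]_{(\cdot)}$ under multiplication by $s>0$, exactly as you argue.

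The gap is in the second homotopy. Rescaling $(t_1,\ldots,t_{n-1})$ toward zero in the formula for $G$ does not define a homotopy of maps on $\Sigma^n|K|$, because the scaling is incompatible with the identifications defining the iterated reduced suspension. Concretely, take the basepoint representative $(1,0,\ldots,0,t,z)$ with $(t,z)\neq\ast$: at time $s\in(0,1)$ your formula gives suspension coordinate $2(1-s)-1=1-2s\neq\pm 1$, middle coordinate $((1-s),0,\ldots,0,0)\nin\vartriangle_n$, and label $\Psi_K((1-s),0,\ldots,0,(t,z))$, which property (2) only constrains to lie in $\Sigma|K_{\{2,\ldots,n\}}\ast K_{\{1\}}|$ and which in general is not the basepoint; another representative of the same basepoint (say one with $t=-1$) is sent to the basepoint, so the formula is not constant on equivalence classes. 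This is precisely the issue the paper's own proof guards against when it notes that its homotopy is well defined when $\beta=1$. The repair is small: in the second stage keep the label fixed at $\Psi_K(t_1,\ldots,t_{n-1},(t,z))$ and scale only the outer data, i.e.\ use $N_s(\sigma)=\paren{2(1-s)\beta-1,\,((1-s)t_1,\ldots,(1-s)t_{n-1},0),\,\Psi_K(t_1,\ldots,t_{n-1},(t,z))}$. Then every basepoint representative has label $\ast$, so $N_s$ descends; for $s<1$ property (2) and the scale-invariance of the ordered partition keep the values in $\Sigma\mc Q_K$; and at $s=1$ the suspension coordinate is $-1$, giving the constant map. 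With that correction (and at the same level of point-set care as the paper's own treatment of $\Phi_K$), your argument goes through.
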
 

\begin{proof}
Induct on number of vertices. The base case is trivial. Assume the statement holds for simplicial complexes on less than $n$ vertices.
Since by definition, each $K_I$ is coherently homotopy Golod when $K$ is, 
by our inductive assumption $W^1_I$ is a co-$H$-space for $|I|<n$. Then using Proposition~\ref{PSplitting}, 
the quotient map \seqm{W^1_I}{}{\hat W^1_I} has a right homotopy inverse for $|I|<n$, 
and to show $W^1$ is a co-$H$-space, it remains to show that \seqm{W^1}{}{\hat W^1} has a right homotopy inverse.

Given our map $\Psi_K\colon\seqm{\Sigma^{n-1}\hat W}{}{(D^1)^{\wedge n}}$, 
define a basepoint preserving map $g\colon\seqm{\Sigma^{n-1}\hat W}{}{\mc D_{\mc A}(W)}$ by mapping a point $\sigma=(t_1,\ldots,t_{n-1},x)$ 
by
$$
g(\sigma)=(t_1,\ldots,t_{n-1},0;\Psi_K(\sigma)).
$$
The composite of $g$ and the inclusion $q\colon\seqm{\mc D_{\mc A}(W)}{\iota}{\mb C(\iota_{\mc A,n})}$
is homotopic to the homotopy equivalence \seqm{\Sigma^{n-1}\hat W}{\phi}{\mb C(\iota_{\mc A,n})}
from Lemma~\ref{LCofib} as follows. Let $\beta=\max\{|t_1|,\ldots,|t_{n-1}|,0\}$.
Since no more than $n-1$ particles in $g(\sigma)$ collide at a point  when $\beta>0$, 
$g(\sigma)\subseteq\mc D_{\mc A}(W_{n-1})$ when $\beta>0$, 
so we can homotope $q\circ g(\sigma)=(0,g(\sigma))$ to $(\beta,g(\sigma))$ 
inside $\mb C(\iota_{\mc A,n})$ preserving the basepoint by homotoping $0$ to $\beta$ linearly.
Then we homotope $(\beta,g(\sigma))$ to $\phi(\sigma)=(\beta,(t_1,\ldots,t_{n-1},0;x))$ via the basepoint preserving homotopy
$$
H_t(\sigma)=
\begin{cases}
\ast & \mbox{ if }\beta=1;\\
(\beta,(t_1,\ldots,t_{n-1},0;\Psi_K((1-t)t_1,\ldots,(1-t)t_{n-1},x))) & \mbox{ if }\beta<1.
\end{cases}
$$
Notice $H_t(\sigma)$ is well-defined when $(t_1,\ldots,t_{n-1})$ is on the boundary of $[-1,1]^{\times(n-1)}$, 
since then $\beta=1$, and is continuous as $(t_1,\ldots,t_{n-1})$ approaches the boundary of $[-1,1]^{\times(n-1)}$, 
since then $\beta$ approaches $1$, so $(\beta,(t_1,\ldots,t_{n-1},0;\Psi_K((1-t)t_1,\ldots,(1-t)t_{n-1},x)))$
approaches the basepoint in $\mb C(\iota_{\mc A,n})$.
Thus, since $\phi$ is a homotopy equivalence, the composite of $g$ and \seqm{\mc D_{\mc A}(W)}{q}{\mb C(\iota_{\mc A,n})}
is a homotopy equivalence, so $q$ has a right homotopy inverse, and then by Corollary~\ref{CInverse}, so does \seqm{W^1}{}{\hat W^1}. 
\end{proof}

Next we show that two general classes of Golod complexes - whose moment angle complexes are co-$H$-spaces - are in fact coherently homotopy Golod. Before getting to these, the next example will be useful.

\begin{example}
\label{EDisjointVertex}
Suppose $K$ with no ghost vertices is coherently homotopy Golod on vertex set $[n]$, 
and let $L=K\cup \{n+1\}$ be $K$ with a disjoint vertex $\{n+1\}$ added. 
Then the pair $(L,K)$ is coherently homotopy Golod on vertex set $[n+1]$ as follows. 

Assume this is true for complexes $K$ with less than $n$ vertices. The base case $K=\emptyset$ holds by definition.
Since $L\backslash\{i\}=K\backslash\{i\}\cup\{n+1\}$ when $i\neq n+1$, then by induction each 
$(L\backslash\{i\},K\backslash\{i\})$ is coherently homotopy Golod on vertex set $[n+1]-\{i\}$, 
while $K$ being coherently homotopy Golod implies by definition that $(L\backslash\{n+1\},K\backslash\{n+1\})=(K,K)$ is.  
Thus we are left to show the existence of the map $\Psi_{L,K}$.
 
As before, $W'=Z_L(D^1,S^0)$ and $W=Z_K(D^1,S^0)$.  
%Note $W'_{[n]}=W$, so $(x_1,\ldots,x_n)\in \hat W$ whenever $(x_1,\ldots,x_n,y)\in \hat W'$.
Given our map $\Psi_K\colon\seqm{\Sigma^{n-1}\hat W}{}{(D^1)^{\wedge n}}$ from the third definition of coherent homotopy Golodness of $K$,
define $\Psi_{L,K}\colon\seqm{\Sigma^n\hat W}{}{(D^1)^{\wedge (n+1)}}$ by mapping the basepoint to the basepoint, and a non-basepoint 
$\sigma=(t_1,\ldots,t_{n-1},s,(x_1,\ldots,x_n))\in\Sigma^n\hat W$ to 
$$
\Psi_{L,K}(\sigma)=(x'_1,\ldots,x'_n,y),
$$ 
where $(x'_1,\ldots,x'_n)=\Psi_K(t_1,\ldots,t_{n-1},(x_1,\ldots,x_n))$, 
$$
y=
\begin{cases}
1 & \mbox{if }\alpha\leq s\leq\beta;\\
2\paren{\frac{1-s}{1-\beta}}-1 & \mbox{if }\beta<s<1;\\
2\paren{\frac{1+s}{1+\alpha}}-1 & \mbox{if }-1<s<\alpha;\\
-1 & \mbox{if }|s|=1,
\end{cases}
$$
and $\alpha=\min\{t_1,\ldots,t_{n-1},0\}$ and $\beta=\max\{t_1,\ldots,t_{n-1},0\}$. 
Notice there is a discontinuity here for $y$ as both $s$ and $\beta$ approach $1$, or $s$ and $\alpha$ approach $-1$. 
But this does affect the continuity of $\Psi_{L,K}$ since ($\Psi_K$ being basepoint preserving) $(x'_1,\ldots,x'_n)$ approaches the basepoint 
as $\alpha$ approaches $-1$ or $\beta$ approaches $1$, so $\Psi_{L,K}(\sigma)$ approaches the basepoint here. 
Likewise, $\Psi_{L,K}(\sigma)$ approaches the basepoint if 
(1) some $x_i$ approaches $-1$ (since some $x'_j$ approaches $-1$, since $\Psi_K$ is basepoint preserving);
or (2) $s$ approaches $1$ or $-1$ and $\beta$ or $\alpha$ respectively do not (since $y$ approaches $-1$).
In any case, $\Psi_{L,K}(\sigma)$ approaches the basepoint as $\sigma$ approaches the basepoint.

Since $\Psi_K$ satisfies the first condition of coherent homotopy Golodness,
$\Psi_{L,K}(\sigma)=(x'_1,\ldots,x'_n,y)=(x_1,\ldots,x_n,1)$ when the $t_i$'s and $s$ are zero, 
so $\Psi_{L,K}$ restricts to the inclusion \seqm{\hat W}{}{\hat W'} here, 
thus also satisfies the first condition.

Note $\hat W'_I=\hat W_I$ if $\{n+1\}\nin I$.
Let $\mc S=(t_1,\ldots,t_{n-1},0)$ and $(I_1,\ldots,I_m)=[n]_{\mc S}$. 
Since $\Psi_K$ satisfies the second condition of coherent homotopy Golodness,
$(x'_1,\ldots,x'_n)$ is in $A=\hat W_{I_1}\wedge\cdots\wedge\hat W_{I_m}$.
Let $\mc S'=(t_1,\ldots,t_{n-1},s,0)$ and $(I'_1,\ldots,I'_{m'})=[n+1]_{\mc S}$.
If $s=0$ or $s=t_i$ for some $i$, then $m=m'$,
$I'_k=I_k\cup\{n+1\}$ for some $k$, $I'_i=I_i$ for $i\neq k$, and $y=1$ (since then $\alpha\leq s\leq\beta$). 
In any case, each $I_j\subseteq I'_j$.
Thus $\Psi_{L,K}(\sigma)=(x'_1,\ldots,x'_n,1)$ is in the subspace $A$ of 
$A'=\hat W'_{I_1}\wedge\cdots\wedge\hat W'_{I_m}$
given on each factor by the inclusions \seqm{\hat W_{I_i}}{}{\hat W'_{I'_i}} as described in Remark~\ref{RInclusion}, 
these being equalities when $i\neq k$. 
On the other hand, if $s\neq 0$ and $s\neq t_i$ for each $i$, then $m'=m+1$, $I'_k=\{n+1\}$ for some $k$, 
and $I'_i=I_i$ and $I'_j=I_{j-1}$ for $i<k<j$, 
so $\hat W_{I_k}=D^1$, $\hat W_{I_i}=\hat W_{I'_i}$, and $\hat W_{I_{j-1}}=\hat W_{I'_j}$, for $i<k<j$. 
Then $\Psi_{L,K}(\sigma)$ is in the (homeomorphic) subspace $A\wedge D^1$ of $A'$.
Thus, we see $\Psi_{L,K}$ satisfies the second condition of coherent homotopy Golodness.  
\end{example}

\begin{example}
Recall from~\cite{arXiv:1306.6221} 
that a complex $K$ on vertex set $[n]$ (without ghost vertices) is \emph{extractible} if either $K\backslash\{i\}$ is a simplex for some $i\in [n]$, 
or else (recursively) $K\backslash\{i\}$ is extractible for each $i\in [n]$ and the wedge sum of inclusions 
$\seqm{\bigvee_{i\in [n]}\Sigma|K\backslash\{i\}|}{}{\Sigma |K|}$ has a right homotopy inverse 
$$
s\wcolon\seqm{\Sigma |K|}{}{\bigvee_{i\in [n]}\Sigma|K\backslash\{i\}|}.
$$ 
We show that $K$ is coherently homotopy Golod by inducting on number of vertices.
Suppose any extractible complex on less than $n$ vertices is coherently homotopy Golod, the base case being trivial.
Then since each $K\backslash\{i\}$ is extractible when $K$ is, each $K\backslash\{i\}$ is coherently homotopy Golod on vertex set $[n]-\{i\}$ by induction.
Thus, to show $K$ is coherently homotopy Golod, we are left only to construct the map $\Psi_K\colon\seqm{\Sigma^{n-1}\hat W}{}{(D^1)^{\wedge n}}$
(or $\Psi_K\colon\seqm{\Sigma^n|K|}{}{\Sigma|\Delta^{n-1}|}$ from the second definition).
 
First consider the case where $K\backslash\{i\}$ is a simplex for some $i\in [n]$. Without loss of generality, assume $i=n$.
Let $W=(D^1,S^0)^K$ and define $\Psi_K\colon\seqm{\Sigma^{n-1}\hat W}{}{(D^1)^{\wedge n}}$ by mapping 
$\sigma=(t_1,\ldots,t_{n-1},(x_1,\ldots,x_n))$ to
$$
\Psi_K(\sigma)=\paren{x'_1,\ldots,x'_{n-1},x_n}.
$$
where $x'_i=(1-|t_i|)x_i-|t_i|$. The last coordinate $x_n$ that is unchanged corresponds to the vertex $\{n\}$.
This clearly satisfies the first condition of coherent homotopy Golodness.
One can see that it satisfies the second condition precisely because $K_I$ is a simplex for any $I\subseteq[n]-\{n\}$, 
that is, $\hat W_{I}=(D^1)^{\wedge|I|}$.
So for the second condition to hold, each of the first $n-1$ coordinates $x'_i$ of $\Psi_K(\sigma)$ still has the freedom 
to take on any value in $D^1$, as long as the corresponding $t_i$ is non-zero.

For the general case, let $K^i=K\backslash\{i\}$ and take the disjoint union $L^i=(K^i)\cup \{i\}$. 
Since by induction each $K^i$ is coherently homotopy Golod on vertex set $[n]-\{i\}$, 
by Example~\ref{EDisjointVertex} the pair $(L^i,K^i)$ is coherently homotopy Golod on vertex set $[n]$.
Let $\Psi_{L^i,K^i}\colon\seqm{\Sigma^n|K^i|}{}{\Sigma|\Delta^{n-1}|}$ be the associated map. 
By Corollary~\ref{COrdering} we can assume each $\Psi_{L^i,K^i}$ has been taken so that $\{n\}$ is the exceptional vertex, 
and each of the first $n-1$ vertices $\{i\}\in L^i$ correspond to the parameter $t_i$ in the second definition of coherent homotopy Golodness.
Consider the map
$$
\Psi'_K\wcolon
\seqmmm{\Sigma^n |K|}{\Sigma^{n-1}s}{\cvee{i\in [n]}{}{\Sigma^n|K^i|}}{\vee\Psi_{L^i,K^i}}
{\cvee{i\in [n]}{}{\Sigma|\Delta^{n-1}|}}{}{\Sigma|\Delta^{n-1}|}
$$  
where the last map is the fold map.
Notice that $\Psi'_K$ satisfies the second condition of coherent homotopy Golodness since each $\Psi_{L^i,K^i}$ does,
since the first map $\Sigma^{n-1}s$ and second map are $(n-1)$-fold suspensions (thus, do not change the first $n-1$ coordinates $t_i$),
and since each $|L^i_{I_1}\ast\cdots\ast L^i_{I_m}|$ is a subspace of $|K_{I_1}\ast\cdots\ast K_{I_m}|$ in $|\Delta^{n-1}|$ 
(since $K$ has no ghost vertices). However, $\Psi'_K$ does not necessarily satisfy the first condition. 
Instead, as each $\Psi_{L^i,K^i}$ satisfies the first condition of coherent homotopy Golodness and $\Sigma^{n-1}s$ is an $(n-1)$-fold suspension,
$\Psi'_K$ maps the subspace $\{(0,\ldots,0)\}\wedge \Sigma |K|$ to $\Sigma |K|$ in $\Sigma|\Delta^{n-1}|$ 
via the composite \seqmm{\Sigma |K|}{s}{\bigvee_i\Sigma|K\backslash\{i\}|}{}{\Sigma |K|}, 
thus the restriction of $\Psi'_K$ to this subspace is homotopic to the identity.
Denote this homotopy by $H\colon\seqm{\Sigma|K|\times [0,1]}{}{\Sigma |K|}$. Pick some $0<\varepsilon<1$. Let
$$
\chi(t)=
\begin{cases}
0 & \mbox{if }|t|\leq\varepsilon;\\
\frac{t-\varepsilon}{1-\varepsilon} & \mbox{if }t>\varepsilon;\\
\frac{t+\varepsilon}{1-\varepsilon} & \mbox{if }t<-\varepsilon,
\end{cases}
$$
and for any $\sigma=(t_1,\ldots,t_{n-1},x)\in\Sigma^n|K|$, where $x\in\Sigma|K|$, let $\beta=\max\{|t_1|,\ldots,|t_{n-1}|,0\}$,
and
$$
t_\sigma=
\begin{cases}
\frac{\varepsilon-\beta}{\varepsilon}  & \mbox{if }\beta\leq\varepsilon;\\
0 & \mbox{if }\beta>\varepsilon.
\end{cases}
$$
Note $|\chi(t)|$ approaches $1$ as $|t|$ approaches $1$. Define $\Psi_K\colon\seqm{\Sigma^n|K|}{}{\Sigma|\Delta^{n-1}|}$ by
$$
\Psi_K(\sigma)=\Psi'_K(\chi(t_1),\ldots,\chi(t_n),H_{t_\sigma}(x)).
$$    
Now $\Psi_K$ satisfies the first condition of coherent homotopy Golodness. 
Since $\chi(t_i)\leq\chi(t_j)$ when $t_i<t_j$, $\chi(t_i)=\chi(t_j)$ when $t_i=t_j$, 
and $\Psi_K$ is defined in terms of $H_{t_\sigma}$ (which maps to $\Sigma|K|\subseteq\Sigma|\Delta^{n-1}|$) 
and $\Psi'_K$ (which satisfies the second condition of coherent homotopy Golodness), 
we see $\Psi_K$ also satisfies the second condition.
\end{example}

\begin{example}
Define a simplicial complex $K$ on vertex set $[n]$ to be $m$-\emph{neighbourly} if every subset of at most $m$ vertices in $[n]$ is a face in $K$. 
We say $K$ is \emph{neighbourly} if it is $\floor{\frac{n}{2}}$-neighbourly.

Suppose $K$ is neighbourly.
Since $|I|\leq\floor{\frac{n}{2}}$ or $|J|\leq\floor{\frac{n}{2}}$ for any disjoint non-empty $I,J\subsetneq [n]$, 
then at least one of $|K_I|$ or $|K_J|$ is a simplex, implying $|K_I\ast K_J|\simeq\Sigma |K_I|\wedge |K_J|$ is contractible. 
Therefore each inclusion \seqm{\Sigma |K|}{\Sigma\iota_{I,J}}{\Sigma |K_I\ast K_J|} is nullhomotopic, implying $K$ is Golod. 
We show $K$ is coherently homotopy Golod by inducting on number of vertices as follows.

Suppose any neighbourly complex on less than $n$ vertices is coherently homotopy Golod. 
Since each $K\backslash\{i\}$ is neighbourly when $K$ is, $K\backslash\{i\}$ is coherently homotopy Golod by induction. 
Now define 
$$
\Psi_K\colon\seqm{\Sigma^{n-1}\hat W}{}{(D^1)^{\wedge n}}
$$ 
by mapping the basepoint to the basepoint, and a non-basepoint $\sigma=(t_1,\ldots,t_{n-1},(x_1,\ldots,x_n))$ to 
$$
\Psi_K(\sigma)=\paren{f_1(x_1),\ldots,f_n(x_n)},
$$
where $\beta=\max\{|t_1|,\ldots,|t_{n-1}|,0\}$,
$$
f_i(x_i)=
\begin{cases}
(1-2\alpha_i\beta)(x_i+1)-1 & \mbox{if }0\leq\alpha_i<\frac{1}{2};\\
(1-\beta)(x_i+1)-1 & \mbox{if }\alpha_i\geq\frac{1}{2},
\end{cases}
$$ 
and (letting $t_n=0$)
$$
\alpha_i=\min\cset{\csum{j\in S}{}{|t_i-t_j|}}{S\subseteq [n]-\{i\}\mbox{ and }|S|=\floor{\frac{n}{2}}}.
$$
We must show $\Psi_K$ is continuous. Any possible discontinuity happens when $\beta$ approaches $1$,
in other words, when $(t_1,\ldots,t_{n-1})$ approaches the boundary of $[-1,1]^{\times (n-1)}$,
or when $x=(x_1,\ldots,x_n)$ approaches the basepoint. To see that there is no issue here, 
notice $S\cap S'$ is non-empty for any $S\subseteq [n]-\{i\}$ and $S'\subseteq [n]-\{n\}$ such that $i\neq n$ 
and $|S|=|S'|=\floor{\frac{n}{2}}$, implying $\max\{\alpha_n,\alpha_i\}\geq \frac{|t_i-t_n|}{2}$.
So if $|t_i|$ approaches $1$ for some $i\neq n$, since $t_n$ is always $0$, 
at least one of $\alpha_n$ or $\alpha_i$ approaches some value $L\geq\frac{1}{2}$,
so one of $f_i(x_i)$ or $f_n(x_n)$ approaches $-1$. Also, since $f(x_i)$ approaches $-1$ as $x_i$ does,
$\Psi_K(\sigma)$ approaches the basepoint when $x$ does. 
In any case, $\Psi_K(\sigma)$ approaches the basepoint as $\sigma$ approaches the basepoint.

Clearly $\Psi_K$ satisfies the first condition of coherent homotopy Golodness. 
Since $\hat W_I= (D^1)^{\wedge |I|}$ whenever $|I|\leq \floor{\frac{n}{2}}$, 
in order for $\Psi_K$ satisfy the second coherent homotopy Golod condition,
for each $i$, $f_i(x_i)$ has freedom to taken on any value in $[-1,1]$ whenever there are no more than 
$\floor{\frac{n}{2}}$ values of $j$ for which $t_j=t_i$, so defining $f_i$ as we did presents no difficulties here.
On the other hand, when there are strictly more than $\floor{\frac{n}{2}}$ values of $j$ for which $t_j=t_i$,
we have $f_i(x_i)=x_i$ and each $f_j(x_j)=x_j$ (since $\alpha_i=\alpha_j=0$).
So because $x=(x_1,\ldots,x_n)\in \hat W$, the restriction of $x$ to the coordinates corresponding to $x_i$ and each $x_j$
is an element in some $\hat W_I$ (this does not give a continuous map \seqm{\hat W}{}{\hat W_I}), 
so there is no issue here either. 
Thus $\Psi_K$ satisfies the second condition, and $K$ is coherently homotopy Golod, therefore a co-$H$-space. 

We note that another argument showing $\floor{\frac{n}{2}}$-neighbourly moment-angle complexes are 
co-$H$-spaces appears in~\cite{BebenGrbic2}, this time using the Freudenthal suspension theorem. 

\end{example}

\begin{example}
Let $K=(\bd\Delta^2\ast\bd\Delta^1)\cup_{\bd\Delta^1}\Delta^1$ on vertex set $[5]$. 
Since every pair of distinct vertices in $K$ is connected by an edge, 
$K$ is neighbourly, and therefore coherently homotopy Golod. 
But $H_2(|K|)\cong \mb Z$, while $H_2(|K\backslash\{i\}|)=0$ for each $i\in [5]$,
so $K$ cannot be extractible. 
\end{example}

\begin{question}
Do any of the following implications hold: (1) $K$ Golod $\Rightarrow$ $K$ homotopy Golod;  
(2) $K$ homotopy Golod $\Rightarrow$ $K$ weakly coherently homotopy Golod;
(3) $K$ weakly coherently homotopy Golod $\Rightarrow$ $K$ coherently homotopy Golod;
\end{question}

Conjecture~\ref{CGolodConj} is false if (1) is false, and true if both (1) and (2) are true.

\bibliographystyle{amsplain}
\bibliography{mybibliography}

\end{document}